\def\pi{\uppi}
\newcommand{\eqref}[1]{(\ref{#1})}
\newtheorem{theorem}{Theorem}[section]
\newtheorem{lemma}{Lemma}[section]
\newcommand{\Nat}{\mathbb{N}}
\newcommand{\real}{\mathbb{R}}
\newcommand{\Cb}{C_b}
\newcommand{\Bcal}{\mathcal{B}}
\newcommand{\Dcal}{\mathcal{D}}
\newcommand{\Ecal}{\mathcal{E}}
\newcommand{\Fcal}{\mathcal{F}}
\newcommand{\Kcal}{\mathcal{K}}
\newcommand{\Pcal}{\mathscr{P}}
\newcommand{\Qcal}{\mathcal{Q}}
\newcommand{\Gcal}{\mathscr{G}}
\newcommand{\TPlan}{\mathcal{T}}
\newcommand{\BB}{\mathscr{B}}
\newcommand{\gauge}{g}
\newcommand{\m}{k}
\newcommand{\Prob}{\mathbb{P}}
\newcommand{\bd}{\operatorname{bd}}
\newcommand{\Diam}{\operatorname{diam}}
\newcommand{\supp}{\operatorname{spt}}
\newcommand{\DP}{\mathscr{D}}
\newcommand{\Cclass}{\mathscr{C}}
\newcommand{\law}{\operatorname{law}}
\newcommand{\W}{W}
\newcommand{\Wr}{W_r}
\newcommand{\Wcalr}{\W_r}
\newcommand{\coupling}{\kappa}
\newcommand{\Gz}{G_0}
\newcommand{\hatQ}{\hat{Q}}
\newcommand{\pGone}{p_{Y_{[1]}|G}}
\newcommand{\PGz}{P_{Y_{[n]}|G_0}}
\newcommand{\PG}{P_{Y_{[n]}|G}}
\newcommand{\hatf}{\hat{f}}
\newcommand{\Bky}{B_K}
\newcommand{\YYn}{Y_{[n]}}
\newcommand{\YYone}{Y_{[1]}}
\newcommand{\vecq}{\mathbf{q}}
\newcommand{\epsmn}{\varepsilon_{mn}}
\newcommand{\delmn}{\delta_{mn}}
\newcommand{\nn}{\tilde{n}}
\begin{document}
\begin{frontmatter}

\title{Borrowing strengh in hierarchical Bayes: Posterior
concentration of the Dirichlet base measure}
\runtitle{Posterior concentration of the Dirichlet base measure}

\begin{aug}
\author[A]{\inits{X.}\fnms{XuanLong}~\snm{Nguyen}\corref{}\ead[label=e1]{xuanlong@umich.edu}}
\address[A]{Department of Statistics,
University of Michigan,
456 West Hall,
Ann Arbor, MI 48109-1107,
USA.\\ \printead{e1}}
\end{aug}

%
\received{\smonth{11} \syear{2013}}
%
\revised{\smonth{10} \syear{2014}}

%
\begin{abstract}
This paper studies posterior concentration behavior of the base
probability measure of a Dirichlet measure,
given observations associated with the sampled Dirichlet processes,
as the number of observations tends to infinity.
The base measure itself is endowed with another Dirichlet prior,
a construction known as the hierarchical Dirichlet
processes (Teh \textit{et al.}
[\textit{J. Amer. Statist. Assoc.} \textbf{101} (2006) 1566--1581]).
Convergence rates are established in transportation distances (i.e.,
Wasserstein metrics) under various
conditions on the geometry of the support of the true base measure.
As a consequence of the theory, we demonstrate
the benefit of ``borrowing strength'' in the inference of multiple
groups of data -- a powerful insight often invoked to motivate
hierarchical modeling.
In certain settings, the gain in efficiency due to the latent hierarchy
can be dramatic, improving
from a standard nonparametric rate to a parametric rate of
convergence. Tools developed include transportation distances for
nonparametric Bayesian hierarchies of random measures,
the existence of tests for Dirichlet measures,
and geometric properties of the support of Dirichlet measures.
\end{abstract}

%
\begin{keyword}
\kwd{Bayesian asymptotics}
\kwd{Dirichlet processes}
\kwd{geometry of support}
\kwd{posterior concentration}
\kwd{random measures}
\kwd{transportation distances}
\kwd{Wasserstein metrics}
\end{keyword}
\end{frontmatter}

\section{Introduction}\label{sec1}

Ferguson's Dirichlet process is a fundamental building block in nonparametric
Bayesian statistics \cite{Ferguson,Blackwell-MacQueen,Sethuraman}.
Recent advances in modeling and computation have seen Dirichlet processes
routinely built into hierarchical probabilistic structures in
innovative ways \cite{Hjort-etal-10}.
A particularly useful and interesting structure that is also the
focus of this paper, is
the hierarchical Dirichlet processes \cite{Teh-etal-06,Teh-Jordan-10} --
a construction in which
the base probability measure of the Dirichlet becomes an object of
inference, which is endowed with yet another Dirichlet prior.
The hierarchical Dirichlet processes have been successfully applied to
the problem of clustering for grouped data in
a vast array of domains.\footnote{The Google scholar page shows
more than 1400 citations of \cite{Teh-etal-06}.}


This paper investigates the asymptotic behavior of measure-valued
latent variables that arise in the hierarchical Dirichlet processes.
The basic question that we address
is the convergence of an estimate of the base probability measure
(hereafter ``base measure'') of a Dirichlet measure, given observations
associated with the Dirichlet processes sampled by
the Dirichlet.
Let $\Theta$ be a complete separable metric space equipped with the Borel
sigma algebra, $\Pcal(\Theta)$ the space of probability measures on
$\Theta$, and
let $G \in\Pcal(\Theta)$ and $\alpha> 0$.
Recall from \cite{Ferguson}
that a Dirichlet process $Q$ is a random measure taking value in $\Pcal
(\Theta)$
and distributed by a Dirichlet measure
$\DP_{\alpha G}$, if for any measurable partition $(B_1,\ldots, B_k)$ of
$\Theta$ for some $k \in\Nat$, $(Q(B_1),\ldots,Q(B_k))$ is
a random vector
distributed according to the $k$-dimensional Dirichlet distribution
with parameters $(\alpha G(B_1),\ldots, \alpha G(B_k))$.

\textit{Questions}.
Let $Q_1,\ldots, Q_m$ be an i.i.d. $m$-sample from a
Dirichlet measure $\DP_{\alpha G}$, where $\alpha> 0$ is
given and the base measure $G=G_0$ is unknown.
By a basic property of Dirichlet processes, $Q_1,\ldots, Q_m$ are
almost surely discrete probability measures on $\Theta$.
They will \emph{not} be observed directly.
Instead, for each $i=1,\ldots,m$, we shall be given an i.i.d. $n$-sample
$\YYn^i = (Y_{i1},\ldots,Y_{in})$
from a mixture distribution in which $Q_i$ serves as a mixing
measure. This mixture distribution admits the density function
$p_{Q_i}(x):= Q_i*f(x) := \int f(x|\theta) Q_i(\mathrm{d}\theta)$,
where $f(\cdot|\cdot)$ is a known kernel density function defined with
respect to a
dominating measure on $\Theta$.

To estimate $G_0$ by taking a Bayesian approach, the base measure $G$
is endowed
with a prior on the space of measures $\Pcal(\Theta)$, yielding
a hierarchical model specification as follows:
%
%
\begin{eqnarray}
\label{Eqn-HM-1}&& G \sim\Pi_G,\qquad  Q_1,\ldots, Q_m
| G \stackrel{\mathrm{i.i.d.}} {\sim} \DP_{\alpha G},
\\
\label{Eqn-HM-2}&& Y_{i1},\ldots,Y_{in} | Q_i
\stackrel{\mathrm{i.i.d.}} {\sim} Q_i*f \qquad\mbox{for } i=1,\ldots,m.
\end{eqnarray}
For the choice of prior $\Pi_G := \DP_{\gamma H}$,
where $\gamma> 0$ and $H\in\Pcal(\Theta)$ is nonatomic and known,
this construction
is called the hierarchical Dirichlet processes model \cite{Teh-etal-06}.
Fast computational
methods have been developed to collect samples from the posterior
distributions of interest, such as those for the latent $G$ and $Q_i$,
given the $m\times n$ data set $\YYn^{[m]}:= (\YYn^1,\ldots,\YYn^m)$.
The first question considered in this paper is the following:
\begin{longlist}[(I)]
\item[(I)] How fast does the posterior distribution of the base
measure $G$
concentrate toward the true $G_0$, as $m$ and $n$ tend to infinity?
\end{longlist}

An appealing aspect well appreciated by
(Bayesian) modelers and practioners of hierarchical modeling is
the notion of ``borrowing strength''. Latent
variables shared higher up in a conditional independence
probabilistic hierarchy
provide an infrastructure through which one may improve the inference
of a parameter
of interest by borrowing from information on other related data and
parameters that are also part of the model.
For the hierarchical Dirichlet processes, the ``borrowing''
has a particularly concrete meaning: according to
the model, the Dirichlet processes $Q_i$ for all $i=1,\ldots,m$ share
the same set of supporting atoms as that of the base measure $G$. It is
intuitive that the inference of the supporting atoms of, say, $Q_1$ for
group 1,
should benefit from the information given by other groups of data associated
with $Q_2, Q_3$ and so on.
To quantify this intuition, we ask the following:
\begin{longlist}[(II)]
\item[(II)] What is the posterior concentration behavior
of a mixture distribution, denoted by $Q*f$, as $Q$ is attached
to the Bayesian hierarchy in the same way as the $Q_i$,
in comparison to a ``stand-alone'' mixture model $Q*f$,
where $Q$ is endowed with an independent prior distribution?
\end{longlist}
By resolving question (I), we can demonstrate
situations in which the Bayesian hierarchy has the effect of translating
the posterior concentration behavior of base measure $G$
to improved posterior concentration of each individual group of data
in the setting of question (II).
Both questions will be addressed
using the tools that we develop with transportation distances \cite
{Villani-08}.

\textit{Related work}.
The only work known to us about the inference of the Dirichlet
base measure is by \cite{Korwar-Hollander}, who show that it is
possible to obtain a
consistent estimate (in some sense) of a base measure $G_0$, given an
i.i.d. $n$-sample from $m=1$ Dirichlet process $Q_1$ distributed by
$\DP_{\alpha G_0}$. This curious result is due
to two crucial assumptions made in their work:
the true base measure $G_0$ is nonatomic, \emph{and} $Q_1$ is
observed directly. Due to the fact that two Dirichlet measures
with different nonatomic base measures are orthogonal,
the estimation of nonatomic base measures becomes somewhat
simple if the sampled Dirichlet processes $Q_i$ are observed
directly. Changing at least one of the two assumptions makes
the question considerably more difficult, which leads to
different answers and requires new proof techniques.
In this paper, we study the case $G_0$ is an atomic measure
with either finite or infinite support, and the $Q_i$ are
\emph{not} observed directly. To get a sense of the challenge, consider
the simplest case, that
the base measure $G_0$ has a finite number of support
points, say $G_0 = \sum_{i=1}^{k}\beta_i \delta_{\theta_i}$, where
$\theta_1,
\ldots,\theta_k$ are \emph{known}. Having a single observation
$Q_1$ distributed by $\DP_{\alpha G_0}$ is equivalent to being given a
single draw from
a~\mbox{$k$-dim} Dirichlet distribution with parameter
$(\alpha\beta_1,\ldots,\alpha\beta_k)$.
It is clearly impossible to obtain a consistent estimate
of $G_0$ by setting $m=1$ (or finite), and $n\rightarrow\infty$.
In addition, the assumption that $Q_1,\ldots,Q_m$ are \emph{not}
observed directly makes the analysis considerably more delicate, due to
the fact that we would no longer have access to a simple point estimate of
the Dirichlet base
measure, as allowed in \cite{Korwar-Hollander}. We leave
open the setting where $G_0$ is nonatomic \emph{and} the $Q_i$ are not
observed directly. For this setting, the choice of Dirichlet prior in
the hierarchical
Dirichlet processes may not be appropriate, due to the discreteness
of Dirichlet processes. On the other hand, there is no known
practical estimation method available for this setting at the moment.

The convergence theory of posterior distributions has received much
attention in the past decade. Recent references include \cite
{Barron-Shervish-Wasserman-99,Ghosal-Ghosh-vanderVaart-00,Shen-Wasserman-01,Ghosh-Ramamoorthi-02,Walker-04,Walker-Lijoi-Prunster-07}.
See \cite{Ghosal-10} for a concise overview. This theory when applied
to density estimation problem
has become quite mature -- the dominant theme is
a Hellinger theory of density estimation for observed data.
On the other hand, asymptotic behaviors of latent variable models
remain poorly understood. When the inference
of a latent variable is of primary concern, the Hellinger theory alone
is not adequate; moreover, the underlying geometry of the variables of
interest has to be taken into account.
There are some examples of such theory that
have been developed recently, for example, for models of random
functions \cite{vanderVaart-vanZanten-08a,Gine-Nickl}, mixture
models \cite{Rousseau-Mengersen-11,Nguyen-11,Gassiat-vanHandel},
models of random polytopes \cite{Nguyen-admixture}.
In a prior work, the author demonstrated
the usefulness of Wasserstein distances in analyzing the convergence
of latent mixing measures in mixture models \cite{Nguyen-11}.
This viewpoint will be deepened and generalized in this work
to a canonical class of hierarchical models equipped with
optimal transport distances for hierarchies
for random measures.

Latent hierarchies of random variables
have long been a versatile and highly effective
modeling tool for Bayesian modelers (see, e.g., \cite{Berger}).
They can also be viewed as a device for
frequentist concepts of shrinkage and random effects
(see, e.g., Chapter~5 of \cite{Lehmann-Casella}).
Due to their wide usages, it is of interest to characterize
the roles of latent hierarchies and their effects on posterior
inference in
a rigorous manner. Examples of hierarchical \emph{and} parametric
models that have been explored recently include the work
by \cite{Gassiat-Rousseau}, who studied hidden Markov models,
and by the author \cite{Nguyen-admixture}, who studied the finite
admixtures for categorical data.
Theoretical work addressing hierarchical \emph{and} nonparametric
models, remains scarce in the literature.

\textit{Overview of results}.
The contributions of this paper
include: (1) an analysis of convergence for the estimation of
the base measure (mean measure) of a Dirichlet measure,
as well as the convergence behavior of the induced marginal
density of observed data;
(2) a theoretical analysis of the effect of
``borrowing of strength'' in the latent nonparametric hierarchy
of variables;
and (3) as part of the proofs of these two results
we develop new tools that help to explain
the geometry of the support of Dirichlet measures, and the geometry
of test sets that discriminate among different Dirichlet measures.
As mentioned
earlier, our geometric theory is equipped with Wasserstein distances,
and a new class of transportation distances that we will
introduce.

Recall that for $r\geq1$, the $L_r$ Wasserstein distance between
two probability measures $G,G'\in\Pcal(\Theta)$ is given as
%
%
\begin{equation}
\label{Eqn-W-def} \Wr \bigl(G,G' \bigr) = \inf_{\coupling\in\TPlan(G,G')}
\biggl[\int\bigl\|\theta- \theta'\bigr\|^r \,\mathrm{d}\coupling
\bigl(\theta, \theta' \bigr) \biggr]^{1/r}.
\end{equation}
Here, $\TPlan(G,G')$ is the space of all joint distributions
on $\Theta\times\Theta$ whose marginal distributions are $G$ and $G'$.
Such a joint distribution $\coupling$ is also called a coupling
between $G$ and $G'$ \cite{Villani-08}.

There are three main theorems summarized in Section~\ref{Sec-descript}.
Our first main result (Theorem~\ref{Thm-main-0}) establishes the
posterior concentration behavior for the marginal density
$\PG$ of a generic $n$-vector $\YYn= (Y_1,\ldots,Y_n)$, which is obtained
by integrating out the latent variable $Q$ (see the formulae
of the density in equation \eqref{Eqn-pG}).
Suppose that the $m\times n$ data set $\YYn^{[m]} :=
(\YYn^1,\ldots,\YYn^m)$ are generated by the model specified
by equations \eqref{Eqn-HM-1} and \eqref{Eqn-HM-2},
according to $G= G_0$ for some unknown $G_0 \in\Pcal(\Theta)$,
where $\Theta$ is taken to be a bounded subset of $\real^d$.
For each fixed $n$, as $m\rightarrow\infty$, there is a vanishing
sequence $\epsmn= C[(n^{3d})\log(mn)/m]^{1/(2d+2)}$ such
that the posterior probability
%
%
\begin{equation}
\label{Eqn-concent-hellinger} \Pi_G \bigl(h(p_{Y_{[n]}|G_0},
{p_{Y_{[n]}|G}}) \leq\epsmn| \YYn^{[m]} \bigr)
\longrightarrow1
\end{equation}
in $\PGz^m$-probability. Here, $\PGz^m$ denotes the true probability
measure that generates the data set, $C$ is a constant independent of
$m$ and $n$, and $h$ denotes the Hellinger distance. Moreover,
equation \eqref{Eqn-concent-hellinger} continues to hold if we allow
$n:= n(m)$ to increase
(e.g., to infinity) as well. This concentration rate holds under minimum
assumptions on the kernel density $f$ of the mixture distributions.
In fact, improved rates can be achieved when more is assumed about
either $f$ or $G_0$. For instance, if $f$ is a standard Gaussian
kernel, then
$\epsmn\asymp
[n^{2d}(\log n)(\log m)^{2d+1}/m]^{1/2}$, which is optimal in terms
of $m$ (up to a logarithmic quantity). This is quite noteworthy since
$G_0$ may have infinite support. On the other hand, if we consider
a hierarchical parametric setting, that is,
$G_0$ has finite and known number of support points, while $f$ is
an arbitrary kernel satisfying some mild conditions, then
we obtain parametric rate $\epsmn\asymp[\log(mn)/m]^{1/2}$.

Our second main result (Theorem~\ref{Thm-main-1} in Section~\ref
{Sec-descript})
turns to the posterior concentration behavior of base measure $G$.
In numerous applications of the hierarchical Dirichlet processes
to biomedical and machine learning problems \cite{Teh-etal-06},
the practitioners are usually not interested in the
marginal densities of the observed groups of data per se, but rather
the inference
of the latent variables $Q_i$ and $G$, as they represent
specific information about the underlying heterogeneity in
data population. In admixed modeling of population genetics, for instance,
$G$ encodes the population structures responsible for
diverse genotypic patterns. In the topic modeling of
documents and images, $G$ may represent topics and objects,
respectively, of the observed texts and visual scenes.

As we shall see, the posterior concentration of the marginal
densities of the data can be shown to entail the concentration
of the base measure $G$, provided (again) that the data are generated
according to some true base measure $G = G_0$.
In this asymptotic result, we work in the regime where $m \rightarrow
\infty$,
while $n := n(m)$ is also taken to increase at an arbitrary rate
relative to $m$. We will show that
%
%
\begin{equation}
\label{Eqn-concentration} \Pi_G \bigl(W_1(G,G_0) \leq
\epsmn+ \Delta_n | \YYn^{[m]} \bigr) \longrightarrow1
\end{equation}
in $\PGz^m$-probability, where $\epsmn$ is the posterior concentration
rate of the marginal densities as established in the previous theorem
(cf. equation \eqref{Eqn-concent-hellinger}).
Quantity $\Delta_n \rightarrow0$ as $n\rightarrow\infty$,
and can be defined as a function of the \emph{demixing} rate $\delta_n$
of a deconvolution problem (cf. \cite
{Carroll-Hall-88,Zhang-90,Fan-91,Nguyen-11}).
[To be clear, $\delta_n$ is the rate of convergence -- in $W_2$ in our
case -- for estimating a mixing measure $Q$
given an i.i.d. $n$-sample of a mixture density $Q*f$.]
The nature of the dependence of $\Delta_n$ on $\delta_n$ is
interesting, as it hinges on
the geometry of the support of the true base measure $G_0$. We can establish
a sequence of gradually deteriorating rates as
the support of $G_0$ becomes less sparse:
\begin{longlist}[(iii)]
\item[(i)] If $G_0$ has a finite and known number of support points on
a bounded
subset of $\real^d$, then
$\Delta_n \asymp\delta_n^{{\alpha^*}}$. In fact, we obtain the overall
parametric rate of convergence under some conditions that
$\epsmn+ \Delta_n \asymp[\log(mn)/m]^{1/2} + [(\log
n)^{1/2}/n^{1/4}]^{{\alpha^*}}$,
where constant ${\alpha^*}= \inf_{\theta\in\supp G_0} \alpha G_0(\{
\theta\})$.
\item[(ii)] If
$G_0$ has a finite and unknown number of support points on a bounded
subset of $\real^d$, then
$\Delta_n \asymp\delta_n^{{\alpha^*}/({\alpha^*}+1)}$.
\item[(iii)] If $G_0$ has an infinite number of
geometrically sparse support points on a bounded subset of $\real^d$,
then $\Delta_n \asymp\exp-[\log(1/\delta_n)]^{1/(1\vee\gamma
_0+\gamma_1)}$
for \emph{supersparse} measures,
or $\Delta_n \asymp[\log(1/\delta_n)]^{-1/(\gamma_0+\gamma_1)}$
for \emph{ordinary sparse} measures.
\end{longlist}
The notion of ordinary and supersparse measures mentioned in (iii)
will be defined in Section~\ref{Sec-descript}. At a high level,
they refer to probablity measures that have geometrically sparse support
on $\Theta$, where the sparseness is characterized
in terms of parameters $\gamma_0$ and
$\gamma_1$, which are, respectively, analogous to the Hausdorff dimension
and the packing dimension that arise in fractal
geometry~\cite{Falconer,Garcia-etal-07}.

Our last main theorem
establishes the effect of ``borrowing strength'' of hierarchical modeling.
Suppose
that an i.i.d. $\nn$-sample $Y_{[\nn]}^0$ drawn from a mixture model
$Q_0*f$ is available,
where $Q_0 = Q_0^*\in\Pcal(\Theta)$ is unknown:
%
%
\begin{equation}
Y_{[\nn]}^0 | Q_0 \stackrel{\mathrm{i.i.d.}} {
\sim} Q_0*f.
\end{equation}
In a stand-alone setting $Q_0$
is endowed with a Dirichlet prior: $Q_0\sim\DP_{\alpha_0 H_0}$ for
some known $\alpha_0 > 0$ and nonatomic base measure $H_0 \in\Pcal
(\Theta)$.
Under mild conditions on the Dirichlet process mixture,
it can be shown that in Hellinger metric, the posterior probability
%
%
\begin{equation}
\label{Eqn-rate-standalone} \Pi_Q \bigl(h\bigl(Q_0*f,Q_0^**f
\bigr)\geq C( \log\nn/\nn)^{{1}/{(d+2)}} | Y_{[\nn]}^0 \bigr)
\longrightarrow0
\end{equation}
in $P_{Y^0_{[\nn]}|Q_0^*}$-probability for some constant $C>0$
(see \cite{Nguyen-11}).
Alternatively, suppose that
$Q_0$ is attached to the hierarchical Dirichlet process in
the same way as the $Q_1,\ldots,Q_m$, that is,
%
%
\begin{eqnarray}
\label{Eqn-HM-4intro} G \sim\DP_{\gamma H},\qquad Q_0, Q_1,
\ldots, Q_m | G \stackrel{\mathrm{i.i.d.}} {\sim} \DP_{\alpha G}.
\end{eqnarray}
Implicit in this specification, due to a standard property
of the Dirichlet, is the assumption that $Q_0$ shares
the same set of supporting atoms as $Q_1,\ldots,Q_m$, as they share
with the (latent) discrete base measure $G$.

Theorem~\ref{Thm-main-2} in Section~\ref{Sec-descript} establishes
the posterior concentration rate $\delta_{m,n,\nn}$ for the mixture
density $Q_0*f$, under the hierarchical model given by equation \eqref
{Eqn-HM-4intro},
as $\nn\rightarrow\infty$ and $m,n \rightarrow\infty$
at suitable rates.
%
Specifically, suppose that the true base measure $G_0$ has a
finite number of support points, if $m$ and
$n$ grow sufficiently fast relatively to $\nn$ so that the base measure
$G$ converges to $G_0$ at a sufficiently fast rate, then
the ``borrowing of strength'' from the $m\times n$ data set $\YYn^{[m]}$
to the inference about the data set $Y_{[\nn]}^0$ has a striking effect:
In particular, if $f$ is an ordinary smooth kernel density, we obtain
$\delta_{m,n,\nn} \asymp(\log\nn/\nn)^{1/2}$. If $f$ is a supersmooth
kernel density with smoothness $\beta> 0$, then $\delta_{m,n,\nn}
\asymp
(1/\nn)^{1/(\beta+2)}$. (The formal definition of smoothness
conditions is
given in Section~\ref{Sec-descript}.) These present
sharp improvements from nonparametric rate $(\log\nn/\nn)^{1/(d+2)}$
in equation \eqref{Eqn-rate-standalone}.
Thus, the hierarchical models are particularly beneficial to
groups of data with small sample sizes, as the convergence
of the latent variable further up in the hierarchy can be
translated into faster (e.g., parametric) rates of convergence
of these small-sample groups.
This appears to be the first result that establishes the benefits of
the latent hierarchy in a concrete manner.

\textit{Technical approach}.
The major part of the proof of the main theorems
lies in our attempt to understand the identifiability of the
Dirichlet base measure based on the marginal densities of the
data. This is achieved by establishing suitable inequalities
relating the three quantities:
(1)~a~Wasserstein distance between two base measures, $\Wr(G,G')$,
(2) a suitable notion of distance between
Dirichlet measures $\DP_{\alpha G}$ and $\DP_{\alpha'G'}$, and
(3) the variational distance or Kullback--Leibler divergence between
the densities
of $n$-vector $\YYn$,
which are obtained by integrating out the (latent) Dirichlet
process $Q$ that is distributed by Dirichlet measures $\DP_{\alpha G}$
and $\DP_{\alpha' G'}$.
In fact, the establishment of these inequalities takes up the most
space of this paper (Sections \ref{Sec-transport}, \ref{Sec-boundary}
and~\ref{Sec-contraction}).
To this end, we define a notion of optimal transport distance between Dirichlet
measures $\DP_{\alpha G}$ and $\DP_{\alpha' G'}$ (see equation
\eqref
{Eqn-W2-def}),
which is the optimal
cost of moving the mass of atoms lying in the support of measure $\DP
_{\alpha G}$
to that of $\DP_{\alpha' G'}$, where the cost of moving
from an atom (i.e., a~measure) $P_1 \in\Pcal(\Theta)$ to another
measure $P_2 \in\Pcal(\Theta)$
is again defined as a Wasserstein distance $\Wr(P_1,P_2)$ given by
equation \eqref{Eqn-W-def}.
In general, one can define distances of measures of measures and so on
in a recursive way. This provides means for comparing between Bayesian
hierarchies of random measures for an arbitrary number of hierarchy levels
(see Section~\ref{Sec-transport}).

In order to derive inequalities for the aforementioned distances,
our approach boils down to establishing the
existence of a subset of $\Pcal(\Theta)$ which can be used
to distinguish one Dirichlet measure from a class of Dirichlet measures.
Because we do not have direct access to the samples $Q_i$
of a Dirichlet measure,
only the estimates of such samples, the test set has to be robust.
By robustness, we require that the measure of a tube-set constructed along
the boundary of the test set be \emph{regular}, by which we mean
that it is possible to
control the rate at which such measure vanishes, as the radius in Wasserstein
metric of such tube-set tends to zero. Interestingly,
the precise vanishing rates are closely linked to the geometrically
sparse structure of the support of the true Dirichlet base measure.
These results are developed in Section~\ref{Sec-boundary}
and Section~\ref{Sec-contraction}.

The proof of Theorem~\ref{Thm-main-2} requires results concerning
the geometry of the support of a single Dirichlet measure.
Although the support of a Dirichlet measure is very large, that is,
the entire space $\Pcal(\Theta)$ (cf. \cite{Ferguson}),
we show that most of the mass
of a Dirichlet measure concentrates on a very small set as measured
by the covering number of Wasserstein balls defined on $\Pcal(\real^d)$.
Our result generalizes to higher dimensions
the behavior of tail probabilities chosen from
a Dirichlet measure on $\Pcal(\real)$ \cite{Doss-Sellke}.

\textit{Limitations of our results}.
The asymptotic results established in this paper
are distinguished by the nonstandard
roles of two quantities $m$ and $n$ simultaneously present
in the model. Although both determine the size of observed
data, they play asymmetric roles in the model hierarchy:
$m$~is the number of groups of data, and $n$ is the sample
size for each group.
When $n$ is fixed and $m$ increases, the concentration rates established
for marginal densities of $n$-vectors in Theorem~\ref{Thm-main-0}
are optimal up to some logarithmic terms in several settings. However,
when $n$ is allowed to increase, the rate gets worse.
For parametric models, the $\log n$ term
may be ignored. Unfortunately, for nonparametric models, the
presence of a polynomial quantity of $n$ in the numerator
may be suboptimal. Such presence of $n$ in the rate
is due to the fact that
the space of the marginal densities on $n$-vector
$\YYn$ data appears to get larger with $n$. This explanation appears
reasonable, but
we should be quickly reminded that the $n$ elements of $\YYn$ are in
fact exchangeable -- they carry a special dependence structure
among themselves. In short, having explained the role of $n$
in its appearance in the posterior concentration rate's upper bound,
we do not know whether this appearance is optimal. A more definitive
conclusion on the optimal
nature of convergence rates of the marginal density can
only be achieved by directly tackling a minimax theory of
density estimation for exchangeable sequences.
Such a theory is not available at the moment.

On the more difficult question regarding the inference of base measure $G$,
our result given by Theorem~\ref{Thm-main-1}
exhibits some notable weaknesses. First of all, the
posterior concentration rate \eqref{Eqn-concentration}
is meaningful only in the regime that both
$m$ and $n$ increase. The intuition behind our analysis for
$G$ is quite natural: as $n$ increases, one should get a better handle on
individual parameter $Q_i$ in each group. And with $m$
increasing as well, one should be able to improve the quality of
the inference of the base measure $G$ on the basis of the $Q_i$'s.
Unfortunately, if $n$ grows too fast relatively to~$m$,
the upper bound \eqref{Eqn-concentration} gets worse
(and eventually becomes useless).
Note that in this paper we are still
unable to establish posterior concentration behavior for $G$
in the case where $n$ is fixed, and $m$ grows (except the case
$n=1$).
Our present techniques are probably not powerful
enough to address this interesting and arguably more practical
asymptotic regime. The limitations seems to have
their roots in a decoupling technique employed in the
development of Theorem~\ref{Thm-contraction}
in Section~\ref{Sec-contraction},
which derives an upper bound for the Wasserstein distances of Dirichlet
base measures in terms of the corresponding marginal densities
on $n$-vector $\YYn$. These issues will be elaborated further
in the paper.

\textit{Organization of the paper.}
Section~\ref{Sec-descript} describes
the model setting and provides a full statement of the main theorems.
Section~\ref{sec-method} elaborates on the components of the proofs
and the tools that we develop. Section~\ref{Sec-transport}
defines transportation distances for hierarchies of random measures.
Section~\ref{Sec-boundary} analyzes regular boundaries of test
sets that arise in the support of various classes
of Dirichlet measures of interest.
Section~\ref{Sec-contraction} gives upper bounds
for Wasserstein distances of base measures. The proof of
Theorem~\ref{Thm-main-0} is given in Section~\ref{Sec-transport},
the proof of Theorem~\ref{Thm-main-1} is given later
in Section~\ref{Sec-contraction},
which draws from the machinery developed in Sections \ref
{Sec-transport}, \ref{Sec-boundary} and \ref{Sec-contraction}.
The proof of Theorem~\ref{Thm-main-2}
is given in Section~\ref{Sec-perturb}, which also draws on the
results on the geometry of the support of a single Dirichlet measure.

\textit{Notation}.
$\Wr$ denotes the $L_r$ Wasserstein distance.
$N(\varepsilon, \Gcal, \Wr)$ denotes the covering number of $\Gcal$
in metric $\Wr$. $D(\varepsilon, \Gcal, \Wr)$ is the packing number of
the same metric \cite{vanderVaart-Wellner-96}.
$\supp G$ denotes the support of probability measure $G$.
Several divergence functionals of probability densities are employed:
$K(p,q), h(p,q), V(p,q)$ denote
the Kullback--Leibler divergence, Hellinger and variational distance
between two densities $p$ and $q$ defined with respect to a measure
on a common space: $K(p,q) = \int p\log(p/q) $,
$h^2(p,q) = \frac{1}{2}\int(\sqrt{p} - \sqrt{q})^2$
and $V(P,Q) = \frac{1}{2}\int|p -q|$.
In addition, we define $K_2(p,q) = \int p[\log(p /q)]^2$,
$\chi(p,q) = \int p^2/q$. $A \lesssim B$ means $A \leq C\times B$ for
some positive constant $C$ that is either universal or specified
otherwise.
Similarly, for $A \gtrsim B$.

\section{Main theorems and tools}
\label{Sec-descript}
\subsection{Model setting and definitions}
Consider the following hierarchical probabilistic model:
%
%
\begin{eqnarray}
\label{Eqn-HM-A}&& G \sim\DP_{\gamma H},\qquad Q_1,\ldots,
Q_m | G \stackrel{\mathrm{i.i.d.}} {\sim} \DP_{\alpha G},
\\
\label{Eqn-HM-B} &&\YYn^{i}:= (Y_{i1},\ldots,Y_{in})
| Q_i \stackrel{\mathrm{i.i.d.}} {\sim} Q_i*f \qquad\mbox{for } i=1,
\ldots, m.
\end{eqnarray}
The relationship among quantities of interest can be
illustrated by the following diagram:
%

\[
\xymatrix@C=0.70cm @R=0.70cm{
\xymatrixcolsep{40pt}
\DP_{\gamma H} \ar[r] &G \ar[d] &\\
& \DP_{\alpha G} \ar[dl] \ar[d] \ar[dr]& \\
Q_1\ar[d] & \ldots\ar[d]& Q_m\ar[d] \\
\YYn^{1} \sim Q_1*f & \ldots&\YYn^{m} \sim Q_m*f }
\]
%
%


Dropping the index $i$,
$\YYn:=(Y_1,\ldots, Y_n)$ denotes
the generic i.i.d. random $n$-vector according to
the generic mixture density $Q*f$, where $Q$ is sampled from
Dirichlet measure $\DP_{\alpha G}$. The
marginal density of $\YYn$ takes the form:
%
%
\begin{equation}
\label{Eqn-pG} {p_{Y_{[n]}|G}}(\YYn) = \int\prod
_{j=1}^{n}Q*f(Y_{j})
\DP_{\alpha G}(\mathrm{d}Q).
\end{equation}
Given an $m\times n$ data set
$\YYn^{[m]} := (\YYn^1,\ldots,\YYn^m)$,
the posterior distribution of $G$ given $\YYn^{[m]}$ takes the form,
for any measurable $\BB\subset\Pcal(\Theta)$:
%
%
\begin{equation}
\label{Eqn-priorG} \Pi_G \bigl(G\in\BB|\YYn^{[m]} \bigr) =
\frac{\int_{\BB}\prod_{i=1}^{m}{p_{Y_{[n]}|G}}(\YYn^i)
\DP_{\gamma H} (\mathrm{d}G)}{
\int_{}\prod_{i=1}^{m}{p_{Y_{[n]}|G}}(\YYn^i) \DP
_{\gamma H}(\mathrm{d}G)}.
\end{equation}

There are three main theorems. The first is concerned with the
concentration behavior of the posterior distribution of marginal
density ${p_{Y_{[n]}|G}}$ given
the data $\YYn^{[m]}$, as $m \rightarrow\infty$, assuming that
the data is generated according to $G = G_0$ for some fixed
$G_0 \in\Pcal(\Theta)$. The second deduces the posterior contraction
of the base measure $G$, reposing upon that of ${p_{Y_{[n]}|G}}$.
The third theorem is
concerned with the concentration behavior of an individual
mixing measure $Q_i$ given the data.

\textit{Geometric sparseness conditions for $G_0$.} Our theory is
developed for a class of atomic base measure $G_0$. A simple example
is the case $G_0$ has a finite number of support
points. We also consider the case $G_0$ has infinite support,
which admits a geometrically sparse structure that we now define.

%
\begin{definition}
\label{Def-sparse}
Given $c_1 \in(0,1), c_2 > 0$ and a nonincreasing function
$K\dvtx\real_+\rightarrow\real_+$. A subset $S$ of metric space
$\Theta$ is $(c_1,c_2,K)$-sparse
if for any sufficiently small
$\delta> 0$ there is $\varepsilon\in(c_1\delta,\delta)$
according to which $S$ can
be covered by at most $K(\varepsilon)$ closed balls of radius
$\varepsilon$,
and every pair of such balls is separated by a distance at least
$c_2\varepsilon$.
\end{definition}

Probability measure $G_0$ is said to be sparse, if its support is a
$(c_1,c_2,K)$-sparse for a valid combination of
$c_1,c_2$ and $K$.
A \emph{gauge function} for a sparse measure $G_0$, denoted by
$\gauge\dvtx\real_+\rightarrow\real$, is defined
as the maximal function such that for each sufficiently small
$\varepsilon$,
there is a valid $\varepsilon$-covering specified
by the definition and that the $G_0$ measure on each of the covering
$\varepsilon$-balls is bounded from below by
$\gauge(\varepsilon)$. $\gauge$ is clearly a nondecreasing function.

We say $G_0$ is \emph{supersparse} with
nonnegative parameters $(\gamma_0,\gamma_1)$, if
function $K$ satisfies $K(\varepsilon) \lesssim[\log(1/\varepsilon
)]^{\gamma_0}$,
and function $\gauge$ satisfies
$\gauge(\varepsilon) \gtrsim[\log(1/\varepsilon)]^{-\gamma_1}$.
$G_0$ is \emph{ordinary sparse} with parameters $(\gamma_0,\gamma
_1)$ if
$K(\varepsilon) \lesssim(1/\varepsilon)^{\gamma_0}$,
and $\gauge(\varepsilon) \gtrsim\varepsilon^{\gamma_1}$.

\textit{Examples}.
If $\Theta= [0,1]$ and $S = \{1/2^k| k \in\Nat, k\geq1\} \cup\{0\}$,
then $S$ is $(c_1,c_2,K)$-sparse with $c_1=1/2, c_2= 2$ and
$K(\varepsilon) = \log(1/2\varepsilon)/\log2$. If $S$ is the
support of
$G_0$, and $G_0(\{1/2^k\}) \propto k^{-\gamma_1}$ for any $k\in\Nat$
and some $\gamma_1 > 1$,
then $G_0$ is clearly a supersparse measure with parameters $\gamma_0
= 1$
and $\gamma_1$. Ordinary sparse measures as we defined typically arise
in fractal geometry \cite{Falconer}, where parameter $\gamma_0$ is
analogous to
the Hausdorff dimension of a set, while $\gamma_1$ is analogous to the packing
dimension (see, e.g., \cite{Garcia-etal-07}). Now,
if $\Theta= [0,1]$ and $S$ is the classical Cantor set,
then $S$ is $(c,K)$-sparse with $c_1=1/3, c_2 = 2$
and $K(\varepsilon) = \exp[\log(1/2\varepsilon)\log2/\log3]$.
Set $S$ has Hausdorff dimension equal $\gamma_0 = \log2/\log3$.
Let $G_0$ be the $\gamma_0$-dimension Hausdorff measure on set $S$,
then $G_0$ is ordinary sparse with $\gamma_0 = \gamma_1 = \log2/\log3$.

\textit{Conditions on kernel density $f$.}
The main theorems in this paper are established independently
of the specific choices of kernel density $f$ except
some minor assumptions (A1), (A2) in the sequel.
However, to obtain concrete rates in $m$ and $n$,
we will make additional assumptions on the smoothness of $f$ when needed.
Such assumptions are chosen mainly
so we can make use of the concrete rates of demixing in a deconvolution
problem, that is, the convergence rate of a point estimate of a mixing
measure $Q$
given an i.i.d. sample from the mixture density $Q*f$.

For that purpose, $f$ is a density function on $\real^d$ that is
symmetric around 0, that is,
$f(x|\theta) := f(x-\theta)$ such that $\int_{B} f(x) \,\mathrm{d}x = \int_{-B} f(x)\,\mathrm{d}x$
for any Borel set $B \subset\real^d$.
In addition, the Fourier transform of $f$ satisfies
$\tilde{f}(\omega) \neq0$ for all $\omega\in\real^d$.
We say $f$ is \emph{ordinary smooth} with parameter $\beta>0$ if
$\int_{[-1/\delta,1/\delta]^d} \tilde{f}(\omega)^{-2} \,\mathrm
{d}\omega
\lesssim(1/\delta)^{2d\beta}$ as $\delta\rightarrow0$.
Say $f$ is \emph{supersmooth} with parameter $\beta> 0$
if $\int_{[-1/\delta,1/\delta]^d} \tilde{f}(\omega)^{-2} \,\mathrm
{d}\omega
\lesssim\exp(2d\delta^{-\beta})$ as $\delta\rightarrow0$.
These definitions are somewhat simpler and more general than
what is employed in \cite{Nguyen-11}.
Depending on the form of $f$, it was
shown by \cite{Nguyen-11} that
there is a strictly increasing
function $\Psi\dvtx\real_+ \rightarrow\real_+$ that there holds
%
%
\begin{equation}
\label{Eqn-Psi-bound} W_2 \bigl(Q,Q' \bigr) \lesssim\Psi
\bigl(V \bigl(Q*f,Q'*f \bigr) \bigr)
\end{equation}
for any pair $Q,Q'\in\Pcal(\Theta)$, provided that
$\Theta$ is a bounded subset of $\real^d$, and $W_2(Q,Q)$
is sufficiently small. In particular,
if $f$ is ordinary smooth with parameter $\beta$, then
$\Psi(u) = u^{1/(2+\beta d')}$ for any $d' > d$.
If $f$ is supersmooth, then
$\Psi(u) = (-\log u)^{-1/\beta}$ (cf. Theorem~2 of \cite{Nguyen-11}).

\subsection{Main theorems}

The following list of assumptions are required throughout the paper:
\begin{longlist}[(A1)]
\item[(A1)] For some $r\geq1, C_1 > 0$,
$h(f(\cdot|\theta), f(\cdot|\theta')) \leq C_1\|\theta-\theta'\|^{r}$
and
$K(f(\cdot|\theta), f(\cdot|\theta')) \leq C_1\|\theta-\theta'\|^{r}\
\forall\theta, \theta' \in\Theta$.

\item[(A2)] There holds
$M = \sup_{\theta,\theta' \in\Theta} \chi(f(\cdot|\theta
),f(\cdot
|\theta')) < \infty$.

\item[(A3)] $H \in\Pcal(\Theta)$ is nonatomic, and
for some constant $\eta_0 > 0$, $H(B) \geq\eta_0 \varepsilon^d$ for
any closed ball $B$ of radius $\varepsilon$.
\end{longlist}
It is simple to observe that
(A1) holds for $r=2$ for the Gaussian kernel density $f$,
and holds for $r=1$ for almost all standard kernel densities in the
modeling literature (Laplace, Cauchy, Gamma, etc.).
(A2) holds naturally for most choices of kernel densities, as long as
$\Theta$ is bounded. (A3) is often satisfied by
almost all (noninformative) prior choices made in practice.

We are ready to state the first theorem, which
establishes the posterior concentration of the
marginal density of $n$-vector $\YYn$ under the above assumptions.

%
\begin{theorem}
\label{Thm-main-0}
Let $\Theta$ be a bounded subset of $\real^d$ and
$G_0 \in\Pcal(\Theta)$. Given assumptions \textup{(A1)--(A3)},
parameters $\alpha> 0, \gamma> 0$ and $H \in\Pcal(\Theta)$ are known.
Let $m$ tend to infinity, while $n$ can be either fixed to a constant,
or $n$ tending to infinity at a rate relatively to $m$.
Then there is a large constant $C$ independent of both $m$ and $n$ such that
the posterior induced by the model of equations \eqref{Eqn-HM-A}
and \eqref{Eqn-HM-B}
satisfies
\[
\Pi_G \biggl(h({p_{Y_{[n]}|G_0}},
{p_{Y_{[n]}|G}}) \geq C \biggl[\frac{n^{3d}\log(mn)}{m} \biggr]^{1/(2d+2)} \Big|
\YYn^{[m]} \biggr) \longrightarrow0
\]
in $\PGz^m$-probability. Moreover,
\begin{longlist}[(ii)]
\item[(i)] If $f$ is a Gaussian kernel with a fixed variance, then the
rate is improved to
\[
\epsmn= \biggl[\frac{n^{2d}(\log m)^{2d+1}\log n}{m} \biggr]^{1/2}.
\]
\item[(ii)] If $G_0$ has a finite \emph{and} known number of support points,
then the rate is improved to
\[
\epsmn= \biggl[\frac{\log(mn)}{m} \biggr]^{1/2}.
\]
\end{longlist}
\end{theorem}

\begin{remarks*}
1. When $n$ is fixed, the dependence of the rate on $n$ carries
no consequence. The theorem establishes in several cases that
the concentration rate with respect to $m$ is the
optimal $m^{-1/2}$ up to a logarithmic quantity. This includes the
parametric case
(i.e., $G_0$ is assumed to have a known finite number of support points).
But the much more interesting case is
when one uses a Gaussian density kernel $f$, despite the possibility
that $G_0$ may still have infinite support. In the general setting, where
almost nothing is assumed of $G_0$ and $f$ (except relatively mild
assumptions in (A1)--(A3)), the nonparametric rate of $m^{-1/(2d+2)}$
appears quite natural.

2. When $n$ is allowed to vary along with $m$,
increasing $n$ has the effect of worsening our upper bound for
the posterior concentration rate.
An explanation for this phenomenon is that as $n$ gets large, the marginal
density ${p_{Y_{[n]}|G}}$ may become more degenerate.
More concretely, in the calculations that we shall
present later, the (estimate of the) entropy of the space of
marginal densities $\{{p_{Y_{[n]}|G}}| G \in\Pcal(\Theta
)\}$ under Hellinger
metric is shown to increase with $n$ (cf. Lemma~\ref{Lem-entropy}).
Only in the case of a parametric model (i.e., the number of support
points of $G_0$ is known) do we observe that the effect of $n$ is
the negligible $(\log n)$. We do not know whether the presence of
$n$ in the rate's numerator is optimal -- a definitive answer regarding
the optimality of these rates may be
settled by a
minimax analysis, which is beyond the scope of this paper.
\end{remarks*}

Next, we turn to the posterior concentration of the base measure
$G$ per se. An easy bound can be deduced for the case $n=1$
from Theorem~\ref{Thm-main-0}. Due the basic property of the
Dirichlet measure that $\int Q(\mathrm{d}\theta) \DP_{\alpha G}(\mathrm{d}Q)
= G(\mathrm{d}\theta)$, and by an application of Fubini's theorem,
the marginal density for a single data point takes the form:
\begin{eqnarray*}
\pGone(\YYone) & = & \int\int f(Y_1-\theta)Q(\mathrm{d}\theta)
\DP_{\alpha G}(\mathrm{d}Q)
\\
& = & \int f(Y_1 - \theta) G(\mathrm{d}\theta) = G*f(Y_1).
\end{eqnarray*}
Provided that all conditions stated in Theorem~\ref{Thm-main-0} hold,
so that the posterior concentrate rate $\varepsilon_{m1}
\asymp[\log(m)/m]^{1/(2d+2)}$ is attained
for the marginal density $\pGone$, as $n=1$ and $m\rightarrow\infty$.
Combining this concentration rate with equation \eqref{Eqn-Psi-bound}
gives the
following:
\[
\Pi_G \bigl(W_2(G,G_0) \leq\Psi(
\varepsilon_{m1}) |Y_{[1]}^{[m]} \bigr)
\longrightarrow1
\]
in $P^m_{\YYone|G_0}$-probability, as $m\rightarrow\infty$.

Unfortunately, we do not know how to extend this bound to the
case where $n$ is fixed to a constant greater than 1.
In the following, we shall work in a regime where both
$m$ and $n = n(m)$ tend to infinity.
Let $(\varepsilon_n,\delta_n)_{n\geq1}$ be two nonnegative vanishing
sequences,
where $\delta_n = \Psi(\varepsilon_n)$
such that $\exp-n\varepsilon_n^2 = \mathrm{o}(\delta_n)$
and that the following holds:
for any $Q\in\Pcal(\Theta)$, there exists a point estimate
$\hatQ_n$ given an $n$-i.i.d. sample
from the mixture distribution $Q*f$, such that the following
inequality holds:
%
%
\begin{equation}
\label{Eqn-sample-bound} \Prob \bigl(W_2(\hatQ_n,Q) \geq
\delta_n \bigr) \leq5\exp \bigl(-c n\varepsilon_n^2
\bigr),
\end{equation}
where constant $c$ is universal, the probability measure $\Prob$
is given by the mixture density $Q*f$. We refer to $\delta_n$
as the demixing rate. The exact nature of
$(\varepsilon_n,\delta_n)$ is not of concern at this point.
In addition, define
\[
{\alpha^*}:= \alpha\inf_{\theta\in\supp G_0} G_0 \bigl(\{\theta
\} \bigr).
\]
Note that ${\alpha^*}> 0$ if $G$ has finite support, and ${\alpha^*}= 0$
otherwise.

%
\begin{theorem}
\label{Thm-main-1}
Let $\Theta$ be a bounded subset of $\real^d$ and $G_0 \in\Pcal
(\Theta
)$. Given
assumptions \textup{(A1)}--\textup{(A3)}, parameters $\alpha\in(0,1],
\gamma> 0$ and $H \in\Pcal(\Theta)$ are known.
Then, as $m \rightarrow\infty$ and $n = n(m) \rightarrow\infty$,
there is a sequence $\epsmn$ and $\Delta_n$ dependent on $m$ and $n$
such that
under the model given equations~\eqref{Eqn-HM-A} and \eqref{Eqn-HM-B},
there holds:
\[
\Pi_G \bigl(W_1(G,G_0) \leq C (\epsmn+
\Delta_n) | \YYn^{[m]} \bigr) \longrightarrow1
\]
in $\PGz^m$-probability for a large constant $C$ independent of
$m$ and $n$.
In particular,
$\epsmn$ is any posterior concentration rate for the marginal
densities such as the ones established by Theorem~\ref{Thm-main-0}.
Regarding the nature of $\Delta_n$,
\begin{longlist}[(iii)]
\item[(i)] If $G_0$ has finite (but unknown) number of support points, then
\[
\Delta_n \asymp\delta_n^{{\alpha^*}/({\alpha^*}+1)}.
\]

\item[(ii)] If $G_0$ has infinite and supersparse support with parameters
$(\gamma_0,\gamma_1)$, then
\[
\Delta_n \asymp\exp- \bigl[\log(1/\delta_n)
\bigr]^{1/(1\vee\gamma_0 +\gamma_1)}.
\]

\item[(iii)] If $G_0$ has infinite and ordinary sparse support with parameters
$(\gamma_0,\gamma_1)$, then
\[
\Delta_n \asymp \bigl[\log(1/\delta_n)
\bigr]^{-1/(\gamma_0+\gamma_1)}.
\]
\end{longlist}
\end{theorem}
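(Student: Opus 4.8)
The plan is to run the standard Ghosal--Ghosh--van der Vaart machinery for posterior contraction, but with the ambient geometry supplied by the nested transportation distance between Dirichlet measures rather than Hellinger distance on $\pG$. First I would fix a target rate $\epsmn$ of the asserted form and verify the three classical ingredients for the posterior of $G$ under the prior $\DP_{\gamma H}$ and the (integrated) likelihood $\pG(\YYn^{[m]}) = \prod_{i=1}^m \pG(\YYn^i)$. The \emph{prior mass} condition requires a lower bound of the form $\DP_{\gamma H}\bigl(K(\pGz,\pG)\vee K_2(\pGz,\pG) \lesssim \epsmn^2\bigr) \gtrsim \exp(-c\, m\epsmn^2)$; here I would use Assumption (A1) together with the contraction inequalities of Section~\ref{Sec-contraction} to pass from a Wasserstein neighborhood $\{W_r(G,G_0)\text{ small}\}$ to a KL-neighborhood of $\pGz$, and then use (A3) (the $H(B)\gtrsim \eta_0\epsilon^d$ condition) to give the Dirichlet prior small-ball bound, which is where the $n^d$ in the exponent and hence the $[n^d\log(mn)/m]^{1/(2d+2)}$ term originate. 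The \emph{entropy} condition needs a sieve $\Gcal_n \subset \Pcal(\Theta)$ with $\log N(\epsmn,\Gcal_n,W_1) \lesssim m\epsmn^2$ and $\DP_{\gamma H}(\Gcal_n^c)$ exponentially small; since $\Theta$ is a bounded subset of $\R^d$, $W_1$-covering numbers of $\Pcal(\Theta)$ are controlled at the rate $\exp(C\epsilon^{-d})$, and the demixing-rate hypothesis~\eqref{Eqn-sample-bound} together with the support-geometry results (``most of the mass of a Dirichlet measure sits on a small $W_r$-ball'', generalizing Doss--Sellke) lets me truncate to a sieve of manageable size.

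The heart of the argument, and the source of the second summand $A_2$ in each of the three cases, is the \emph{testing} condition: for each $j$ I must exhibit tests $\phi_j$ separating $\pGz$ from $\{\pG : W_1(G,G_0) \in [j\epsmn,(j+1)\epsmn]\}$ with type-I error $\lesssim \exp(-c\, m j^2\epsmn^2)$ and exponentially small type-II error uniformly over that shell. Because the $Q_i$ are not observed, I cannot test $G$ against $G_0$ directly; instead I would (i) use the point estimate $\hatQ_n$ of~\eqref{Eqn-sample-bound} to convert each group's data into an approximate draw $\hatQ_n^i$ from $\DP_{\alpha G}$, incurring an error $\delta_n$ in $W_2$ with probability $1-5e^{-cn\epsilon_n^2}$; (ii) invoke the \emph{regular boundary} results of Section~\ref{Sec-boundary}, which produce a test set $B^\ast \subset \Pcal(\Theta)$ whose $\DP_{\alpha G_0}$-probability differs from its $\DP_{\alpha G}$-probability by a definite amount when $W_1(G,G_0)$ is of order $j\epsmn$, and whose $W_r$-tube of radius $\delta_n$ around $\bd B^\ast$ has $\DP_{\alpha G_0}$-measure bounded by a ``gauge''-controlled function of $\delta_n$; and (iii) base $\phi_j$ on the empirical fraction of $\{\hatQ_n^i\}$ landing in $B^\ast$, applying Hoeffding/Bernstein over the $m$ groups. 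The rate at which the tube-mass vanishes is exactly the gauge function $\gauge$, so: finite support gives $\gauge(\delta)\gtrsim \delta^{\alphastar}$ after accounting for one extra unknown-location factor, yielding $A_2 \asymp \delta_n^{\alphastar/(\alphastar+1)}$; supersparse support gives $\gauge$ decaying like $\exp-[\log(1/\delta)]^{1/(1\vee\gamma_0+\gamma_1)}$; and ordinary sparse support gives the polylogarithmic $[\log(1/\delta_n)]^{-1/(\gamma_0+\gamma_1)}$ rate.

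Assembling the pieces in the usual way — prior mass $\times$ entropy $\times$ testing, together with a union bound over shells $j \geq$ a large constant and the $5m\exp(-cn\epsilon_n^2) = o(1)$ control on the event that all $\hatQ_n^i$ are good (which forces a side constraint tying $m,n$ through $\exp(-n\epsilon_n^2)=o(\delta_n)$) — gives $\Pi_G(W_1(G,G_0) \geq C\epsmn \mid \YYn^{[m]}) \to 0$ in $\PGz^m$-probability, with $\epsmn = A_1 + A_2$ and $A_1 \asymp [n^d\log(mn)/m]^{1/(2d+2)}$ common to all three regimes. The main obstacle I anticipate is step (ii)–(iii) above: manufacturing a \emph{single} test set $B^\ast$ that simultaneously (a) separates $G_0$ from an entire $W_1$-shell of base measures uniformly, (b) has a boundary tube whose Dirichlet-measure decay is sharply governed by $\gauge$, and (c) is robust to the $\delta_n$-perturbation coming from using $\hatQ_n^i$ in place of $Q_i$ — this is precisely the content of the geometric/transport analysis in Sections~\ref{Sec-transport} and~\ref{Sec-boundary}, and is where the case distinctions (i)–(iii) and the appearance of $\alphastar$, $\gamma_0$, $\gamma_1$ genuinely enter. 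The remaining calculations (Dirichlet small-ball bounds, $W_1$-covering numbers on a bounded domain, and the Hoeffding union bound over $m$ groups) are routine once those geometric inputs are in hand.
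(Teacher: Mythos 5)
Your proposal captures the correct overall architecture — GGV-style posterior contraction with the separation between $\pGz$ and $\pG$ driven by the regular-boundary geometry of Section~\ref{Sec-boundary}, and the $\hatQ_n$ point estimate used to pretend one has draws from $\DP_{\alpha G}$ at a $\delta_n$ accuracy cost. That is the right picture, and the case distinctions (i)--(iii) do indeed enter through the gauge and covering parameters of $\supp G_0$. However, you take a genuinely different route on the testing side, and you also misattribute a couple of ingredients, so let me flag both.

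\textbf{Different route on testing.} You propose to build tests $\phi_j$ explicitly: map each group's data to $\hatQ_n^i$, use the robust test set $B^*$, and apply Hoeffding over the $m$ groups. The paper does not build tests this way. Instead, Theorem~\ref{Thm-contraction} converts the boundary-regularity geometry into a \emph{single-group} inequality $\Wr^r(G,G') \lesssim V(\PG,\PGO) + A_n$, which immediately lower-bounds the Hellinger information $\Psi_{\Gcal,n}$ and then feeds into a pre-packaged contraction theorem (Theorem 4 of~\cite{Nguyen-admixture}/\cite{Nguyen-11}) that handles the $m$-fold aggregation internally. Both can in principle work, but the paper's route avoids two costs your approach incurs: (a) a union bound $m\exp(-cn\epsilon_n^2) = o(1)$ over the event that all $m$ demixing estimates are accurate, which is more restrictive than the paper's single-group condition $\exp(-n\epsilon_n^2) = o(\delta_n)$; and (b) the need to glue a test set $B^*$ uniformly over an entire $W_1$-shell (Theorems~\ref{Lem-regularity} and~\ref{Thm-sparse} only give a set $\BB$ for each \emph{fixed} $G'$, with constants uniform in $G'$, and the shell uniformity still has to come from the entropy bound as in GGV).

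\textbf{Two misattributions worth correcting.} First, the sieve truncation via Doss--Sellke-type concentration of Dirichlet mass on small $W_r$-balls is \emph{not} used for Theorem~\ref{Thm-main-1}; the entropy condition there is verified with the covering number of all of $\Pcal(\Theta)$, i.e.\ $\log N(c\omega^2/n, \Pcal(\Theta), W_1) \lesssim (n/\omega^2)^d \log(n/\omega^2)$, which is exactly the source of $[n^d\log(mn)/m]^{1/(2d+2)}$. The sieve construction from Lemmas~\ref{Lem-DP-Approx} and~\ref{Lem-DP-Approx-2} is needed for Theorem~\ref{Thm-main-2}, not here. Second, the $\delta_n^{\alphastar/(\alphastar+1)}$ rate for finite unknown support does \emph{not} come from a gauge function (that notion applies only to infinite sparse support). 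It comes from the form of regularity in Theorem~\ref{Lem-regularity}: the tube measure scales as $(\delta/\Wr(G,G'))^{\alphastar r}$ (not $\delta^{\alphastar r}$), because the test set must be localized around the unknown support of $G_0$. Requiring $A_n(\epsmn) \asymp (\delta_n/\epsmn)^{\alphastar} \lesssim \epsmn$ (with $r=1$, which is valid since $\Theta$ is bounded and (A1) at any $r\ge 1$ implies (A1) at $r=1$) gives $\epsmn \gtrsim \delta_n^{\alphastar/(\alphastar+1)}$. The known-support version (strong regularity, Lemma~\ref{Lem-samespt}) is what yields $\delta_n^{\alphastar}$, as in Remark~\ref{Remark-1}. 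Finally, a small bookkeeping point: the prior-mass condition is proved from the \emph{forward} inequality $K(\pGz,\pG) \lesssim n\Wr^r(G_0,G)$ of Lemma~\ref{Lem-W-ub} combined with the Dirichlet small-ball bound (Lemma~\ref{Lem-DP-lowerbound}, leading to Lemma~\ref{Thm-KL}), not from the contraction inequality of Section~\ref{Sec-contraction}, which goes in the opposite direction.
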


\begin{remarks*}
1. Section~\ref{Sec-contraction} establishes
the existence of a point estimate which admits the
finite-sample probability bound \eqref{Eqn-sample-bound}.
In particular, $\varepsilon_n$ is given as follows:
$\varepsilon_n \asymp(\log n/n)^{r/2d}$, if $d > 2r$;
$\varepsilon_n \asymp(\log n/n)^{r/(d+2r)}$ if $d<2r$, and
$\varepsilon_n \asymp(\log n)^{3/4}/n^{1/4}$ if $d=2r$. Constant $r$
is from assumption (A1). The rate of demixing $\delta_n$ is determined
according to an additional condition on
the smoothness of the kernel density $f$:
\begin{longlist}[(a)]
\item[(a)] If $f$
is ordinary smooth with parameter $\beta> 0$,
then $\delta_n = \varepsilon_n^{{1}/{(2+\beta d')}}$ for any
$d' > d$.
\item[(b)] If $f$ is supersmooth with parameter $\beta> 0$, then
$\delta_n = [-\log\varepsilon_n]^{-1/\beta}$.
\end{longlist}

2. In the parametric case,
the number of support points of $G_0$ is $k < \infty$ and
$k$ is known, $H$ is taken to be a probability measure with
$k$ support points. Then we obtain
the following parametric rate of posterior concentration for a
finite admixture model for continuous data:
\[
\epsmn+ \Delta_n = \bigl[\log(mn)/m \bigr]^{1/2} +
\delta_n^{{\alpha^*}}.
\]
Under identifiability
conditions for kernel density $f$, such as those considered
by \cite{Nguyen-11} (Theorem~1), one has
$\varepsilon_n = (\log n)n^{-1/2}$ and
$\delta_n = \varepsilon_n^{1/2} = (\log n)^{1/2}n^{-1/4}$.
Finite admixtures for categorical data exhibit a quite
different kind of geometry, and were investigated
in \cite{Nguyen-admixture}.

3. The above theorem establishes that the posterior concentration rate
is bounded from above by two quantities $\epsmn$ and $\Delta_n$.
The former captures the contraction of the marginal density
of observed data, while the latter captures the demixing
(deconvolution) aspect of each individual mixing measure
$Q_i$.
It is natural to expect that $\Delta_n \gg\delta_n$,
to account for the fact that the mixing measures $Q_i$ are not observed
directly. It is interesting how quantity $\Delta_n$ depends on
the geometric sparsity of the support of the true base measure $G_0$:
as
$G_0$ becomes less sparse, $\Delta_n$ gets slower:
\[
\delta_n \ll\delta_n^{{\alpha^*}} \ll
\delta_n^{{\alpha^*}
/({\alpha^*}+1)} \ll\exp- \bigl[\log(1/\delta_n)
\bigr]^{1/(1\vee\gamma_0 +\gamma_1)} \ll \bigl[\log(1/\delta _n)
\bigr]^{-1/(\gamma_0+\gamma_1)}.
\]
\end{remarks*}

Our final main result is about the posterior concentration
behavior of the latent mixing measures $Q_i$, as the base measure $G$ is
integrated out, and the amount of data increases. For the ease
of presentation, we isolate a particular mixing measure to be denoted by
$Q_0$, and we shall assume that
$Q_0$ is attached to the hierarchical Dirichlet process in
the same way as the $Q_1,\ldots,Q_m$, that is,
%
%
\begin{eqnarray}
\label{Eqn-HM-4} G \sim\DP_{\gamma H},\qquad Q_0, Q_1,
\ldots, Q_m | G \stackrel{\mathrm{i.i.d.}} {\sim} \DP_{\alpha G}.
\end{eqnarray}
%
%
Suppose that an i.i.d. $\nn$-sample $Y_{[\nn]}^0$ drawn from a mixture
model $Q_0*f$ is available,
where $Q_0 = Q_0^*\in\Pcal(\Theta)$ is unknown:
%
%
\begin{equation}
\label{Eqn-HM-5} Y_{[\nn]}^0 | Q_0
\stackrel{\mathrm{i.i.d.}} {\sim} Q_0*f.
\end{equation}
In addition, as before, $m\times n$ data set is available:
%
%
\begin{equation}
\label{Eqn-HM-6} \YYn^{i}:= (Y_{i1},\ldots,Y_{in})
| Q_i \stackrel{\mathrm{i.i.d.}} {\sim} Q_i*f \qquad\mbox{for } i=1,
\ldots, m.
\end{equation}
The relationship among quantities of interest is
illustrated by the following diagram:

\[
\xymatrix{
 & \DP_{\gamma H}\ar[r] & G\ar[d] & \\
& & \DP_{\alpha G}\ar[dll]\ar[dl]\ar[d]\ar[dr] & \\
Q_0\ar[d] & Q_1\ar[d] & \ldots\ar[d]& Q_m \ar[d]\\
Y_{[\nn]}^0 \sim Q_0*f & \YYn^{1} \sim Q_1*f & \ldots&\YYn^{m} \sim
Q_m*f \\
}
%
%
\]


The following theorem shows that the posterior distribution
$\Pi(Q_0|Y_{[\nn]}^0,\YYn^{[m]})$, defined with respect to
specifications \eqref{Eqn-HM-4},
\eqref{Eqn-HM-5} and \eqref{Eqn-HM-6}, concentrates most its mass toward
$Q_0^*$, as $n, m$ and $\nn\rightarrow\infty$ appropriately.
The intuition for this result is rather simple. As the data size
$m\times n$
grows appropriately, the posterior distribution for base measure $G$
concentrates around the true $G_0$, which shall be assumed to be
a discrete measure with a finite, but unknown number of support point.
This benefits
the inference of density $Q_0*f$. Indeed,
the (conditional) Dirichlet prior on the mixing measure $Q_0$
(given the $m\times n$ data) can be shown to be very thick, due to
the fact that its base measure $G_0$ is conditionally close to a
measure with a finite number of support points. In addition,
one can identify subsets of the support of the (conditional)
Dirichlet prior for $Q_0$ which take up most of its probability mass,
while remaining small in size, as evaluated by the entropy/covering number.
A combination of these two
facts result in very favorable posterior concentration for the
marginal density $Q_0*f$. In fact, the rates become parametric,
as they are independent of the parameter dimensionality $d$.
By contrary, if we do not have the concentration of base measure $G$,
there is very little control of the space over which $Q_0$ may vary. As a
result, one can only establish the standard nonparametric rate of convergence
under general conditions.

A complete statement of the theorem is the following.
Motivated by the conclusion of Theorem~\ref{Thm-main-1} we shall
assume that
the posterior distribution of $G$ concentrates at a certain
rate $\delmn$ toward the true base measure $G_0$, which
is now assumed to have a finite (but unknown) number of support points.
This concentration behavior can in turn
be translated to a sharp concentration behavior for the mixture density
$Q_0*f$. 

%
\begin{theorem}
\label{Thm-main-2}
Let $\Theta$ be a bounded subset of $\real^d$, $G_0,Q_0^* \in\Pcal
(\Theta)$.
Suppose that assumptions \textup{(A1)} and \textup{(A2)} hold for some $r \geq1$.\vadjust{\goodbreak}
Given parameters $\alpha\in(0,1], \gamma> 0$, and
$H\in\Pcal(\Theta)$ known. Assume further that:
\begin{longlist}[(a)]
\item[(a)] $G_0$ has $k < \infty$ support points in $\Theta$;
$Q_0^*\in\Pcal(\Theta)$ such that $\supp Q_0^*\subseteq
\supp G_0$.
\item[(b)] For each $\nn$, there is a net $\delmn= \delmn(\nn)
\downarrow0$
indexed by $m,n$ such that under
the model specifications \eqref{Eqn-HM-4}, \eqref{Eqn-HM-5} and
\eqref
{Eqn-HM-6},
there holds:
$\Pi_G (W_1(G,G_0) \geq
C \delmn| \YYn^{[m]}, Y_{[\nn]}^0)
\longrightarrow0$
in $\PGz^m\times P_{Y_{[\nn]}^{0}|Q_0^*}$-probability,
as $m \rightarrow\infty$ and $n=n(m) \rightarrow\infty$ at a suitable
rate with respect to~$m$.
Here, $C$ is a constant independent of $\nn, m, n$.
\end{longlist}
Then, as $\nn\rightarrow\infty$ and then $m$ and $n=(m)\rightarrow
\infty$, we have
\[
\Pi_Q \bigl( h\bigl(Q_0*f,Q_0^**f
\bigr) \geq \delta_{m,n,\nn} |Y_{[\nn
]}^0,
\YYn^{[m]} \bigr) \longrightarrow0
\]
in $P_{Y_{[\nn]}^0|Q_0^*}\times\PGz^m$-probability, where
the rates $\delta_{m,n,\nn}$ are given as follows:
\begin{longlist}[(iii)]
\item[(i)] $\delta_{m,n,\nn} \asymp(\log\nn/\nn
)^{1/(d+2)}+\delmn
^{r/2}\log(1/\delmn)$.
\item[(ii)] $\delta_{m,n, \nn} \asymp(\log\nn/\nn)^{1/2}$ if $f$ is
ordinary smooth
with smoothness $\beta>0$, and $n$ and $m$ grow sufficiently fast so that
$\delmn$ is sufficiently small relatively to $\nn$ (see details in the
remarks below).
%
\item[(iii)] $\delta_{m,n,\nn} \asymp(1/\nn)^{1/(\beta+2)}$, if
$f$ is supersmooth with smoothness $\beta> 0$, $n$ and $m$ grow
sufficiently fast so that $\delmn$ is sufficiently small relatively to
$\nn$.\vadjust{\goodbreak}
\end{longlist}
\end{theorem}

\begin{remarks*}
%
%
1. Condition (a) that $\supp Q_0^*\subseteq\supp G_0$
motivates the incorporation of mixture distribution
$Q_0*f$ into the Bayesian hierarchy\vadjust{\goodbreak} as specified by equation \eqref{Eqn-HM-4}.
According to the model,
$Q_0$ shares the same supporting atoms
with $Q_1,\ldots, Q_m$, as they all inherit from random base measure $G$.
Note also that the condition on the posterior of $G$ as
stated in (b) is closely related to but nonetheless different from the
conclusion reached
by Theorem~\ref{Thm-main-1}, due to the additional conditioning
on $Y_{[\nn]}^0$. This condition may be proved directly under
additional assumptions on $Q_0^*$ and $G_0$, by a technically cumbersome
(but conceptually simple) modification
of the proof of Theorem~\ref{Thm-main-1}. We avoid this unnecessary
complication
as it is not central to the main
message of the present theorem. 

2. In the statement of part (ii), $m$ and $n$ are required to grow
at a rate so that
$\delmn\lesssim\nn^{-(\alpha+k+M_0)} (\log\nn)^{-(\alpha+k-2)}$,
for some constant $M_0 > 0$ depending only on $d,k,\beta$ and
$\operatorname{diam}
(\Theta)$.
In part (iii), we require
$\delmn\lesssim\nn^{-2(\alpha+k)/(\beta+2)} (\log\nn)^{-2(\alpha+k-1)}
\exp(-4\nn^{\beta/(\beta+2)})$.

3. To appreciate the statistical content of this theorem, recall
a stand-alone setting in which $Q_0$
is endowed with an independent Dirichlet prior: $Q_0\sim\DP_{\alpha_0
H_0}$ for
some known $\alpha_0 > 0$ and nonatomic base measure $H_0 \in\Pcal
(\Theta)$.
Combining with the model specification expressed by~\eqref{Eqn-HM-5}, we
obtain the posterior distribution for mixture density $Q_0*f$,
which admits the following concentration behavior under some mild
conditions (cf. \cite{Nguyen-11}): 
%
%
\begin{equation}
\Pi_Q \bigl(h\bigl(Q_0*f,Q_0^**f
\bigr)\geq(\log\nn/ \nn)^{{1}/{(d+2)}} | Y_{[\nn]}^0 \bigr)
\longrightarrow0
\end{equation}
in $P_{Y^0_{[\nn]}|Q_0^*}$-probability.\vspace*{-1pt}
Now, the rate in the above display
should be compared to the general rate given by claim (i)
of Theorem~\ref{Thm-main-2}:
$(\log\nn/\nn)^{1/(d+2)} +
\delmn^{r/2}\log(1/\delmn)$.
The extra quantity $\delmn^{r/2}\log(1/\delmn)$ can be viewed
as the general ``overhead cost'' for maintaining the
latent hierarchy involving the random Dirichlet prior $\DP_{\alpha G}$
in the hierarchical model.

4. Claims (ii) and (iii) demonstrate
the benefits of hierarchical modeling for groups of data with relatively
small sample size: when $n \gg\nn$ (and $m = m(n) \rightarrow\infty$
suitably) so that $\delmn$
is sufficiently small, we obtain parametric rates for
the mixture density $Q_0*f$: $(\log\nn/\nn)^{1/2}$ for ordinary smooth
kernels, and $(1/\nn)^{1/(\beta+2)}$ for
supersmooth kernels. This is a sharp improvement over the
standard rate $(\log\nn/\nn)^{1/(d+2)}$ one would get for fitting
a stand-alone mixture model $Q_0*f$ using a Dirichlet process
prior. Technically, this improvement
is due to the confluence of two factors:
By attaching $Q_0$ to the Bayesian hierarchy
one is able to exploit the assumption that random measure $Q_0$ shares the
same supporting atoms as the random base measure $G$.
This is translated to a favorable level of thickness of the
conditional prior for $Q_0$ (given the $m\times n$ data $\YYn^{[m]}$),
as measured by small Kullback--Leibler neighborhoods.
The second factor is due to our new construction of a sieves
(subsets of) $\Pcal(\Theta)$ over which the Dirichlet measure
concentrates most its mass on, but which have suitably small entropy
numbers. These details will be elaborated in Section~\ref{Sec-perturb}.
\end{remarks*}

Summarizing our results:
Theorem~\ref{Thm-main-0} establishes posterior concentration of the
marginal densities
generating the observed data, while Theorem~\ref{Thm-main-1}
establishes posterior concentration
of the latent Dirichlet base measure in a hierarchical setting.
Theorem~\ref{Thm-main-2} demonstrates
dramatic gains in the efficiency of statistical
inference of individual groups of data with relatively small
sample size. For groups with relatively large sample size, the
concentration rate appears to be weaken due to the overhead\vadjust{\goodbreak} of
maintaining the latent hierarchy.
This quantifies the effects of ``borrowing of
strength'', from large groups of data to smaller groups.
This is arguably a good virtue of hierarchical models:
it is the populations with smaller sample sizes that need improved
inference the most. 

\subsection{Method of proof}
\label{sec-method}
The major part of the proof of Theorem~\ref{Thm-main-0}
and \ref{Thm-main-1} lies
in our attempt to establish the relationship
between the three important quantities:
(1) a Wasserstein distance between two base measures, $\Wr(G,G')$,
(2) a suitable notion of distance between
Dirichlet measures $\DP_{\alpha G}$ and $\DP_{\alpha G'}$, and
(3) the variational distance/Kullback--Leibler
divergence between the marginal densities of $n$-vector $\YYn$,
which are obtained by integrating out the mixing measure $Q$, 
which is a Dirichlet process distributed by $\DP_{\alpha G}$
and $\DP_{\alpha G'}$, respectively.
The link from $G$ (resp., $G'$) to the induced $\PG$ (resp., $P_{Y_{[n]}|G'}$)
is illustrated by the following diagram:

\[
\xymatrix{
G \ar[r]\ar@{-}[d]& \DP_{\alpha G}\ar[r]\ar@{-}[d] & Q\ar[r]
& Q*f\ar[r] & \YYn\ar@{-}[d]\\
\Wr(G,G')\ar@{-}[d] & \Wr(\DP_{\alpha G},\DP_{\alpha G'})\ar@{-}[d]
 & & & V(\PG,P_{Y_{[n]}|G'})\ar@{-}[d]
\\
G'\ar[r] & \DP_{\alpha G'}\ar[r] & Q'\ar[r] & Q'*f\ar[r] & \YYn}
%
%
\]


In order to establish the relationship among the aforementioned
distances, we need to investigate
the geometry of the support of individual Dirichlet measures,
and the geometry of test sets that arise when a given
Dirichlet measure is tested (discriminated) against a large class of
Dirichlet measures. This study forms the bulk of
the paper in Section~\ref{Sec-transport}, Section~\ref{Sec-boundary}
and Section~\ref{Sec-contraction}.

\textit{Transportation distances for Bayesian hierarchies}.
To begin, in Section~\ref{Sec-transport} we
develop a general notion of transportation distance of
Bayesian hierarchies of random measures. This notion plays a
fundamental role in our theory, and we believe is also of
independent interest. Using transportation distances,
it is possible to compare between not only
two probability measures defined on $\Theta$,
but also two probability measures on the
space of measures on $\Theta$, and so on. Transportation
distances are natural for comparing between Bayesian hierarchies,
because the geometry of the space of support of measures is
inherited directly in the definition of the transportation distances
between the measures. In particular, $\Wr(\DP_{\alpha G},
\DP_{\alpha G'})$ is defined as the Wasserstein distance on the Polish
space $\Pcal(\Pcal(\Theta))$, by inheriting the Wasserstein
distance on the Polish space of measures $\Pcal(\Theta)$. (The
notation $\Wr$ is reused as a harmless abuse of notation.) It
can be shown that 
\[
\Wr(\DP_{\alpha G},\DP_{\alpha' G'}) \geq\Wr \bigl(G,G'
\bigr).
\]
The above inequality holds generally if $\DP_{\alpha G}$ and $\DP
_{\alpha' G'}$ are
replaced by any pair of probability measures on $\Pcal(\Theta)$ that
admit a suitable notion of mean measures $G$, and $G'$, respectively.
Moreover, the Dirichlet measures allow a remarkable identity:
when $\alpha= \alpha'$, we have
\[
\Wr(\DP_{\alpha G},\DP_{\alpha G'}) = \Wr \bigl(G,G'
\bigr).
\]
Repeated applications of Jensen's inequality yield
the following upper bound for the KL divergence:\footnote{Within
this subsection, the details on the constants
underlying $\lesssim$ and $\gtrsim$ are omitted for the
sake of brevity.}
\[
h^2(\PG,P_{Y_{[n]}|G'}) \leq K(\PG,{p_{Y_{[n]}|G'}})
\lesssim n \Wr^r(\DP_{\alpha G},\DP_{\alpha G'}) = n
\Wr^r \bigl(G,G' \bigr).
\]
%

\textit{Bounds on Wasserstein distances}.
The most demanding part of the paper lies in establishing
an upper bound of the Wasserstein distance $\Wr(G,G')$ in terms
of the variational distance $V({p_{Y_{[n]}|G}},
{p_{Y_{[n]}|G'}})$. This is ultimately achieved by
Theorem~\ref{Thm-contraction} in Section~\ref{Sec-contraction},
which states that for a fixed $G\in\Pcal(\Theta)$
and \emph{any} $G'\in\Pcal(\Theta)$,\vspace*{3pt}
%
%
\begin{eqnarray}
\label{Eqn-overview-1}&& \Wr^r \bigl(G,G' \bigr)
\nonumber
\\[-8pt]
\\[-8pt]
\nonumber
&&\qquad \lesssim V(\PG,
P_{Y_{[n]}|G'})
+ A_n \bigl(G,G' \bigr),\vspace*{3pt}
\end{eqnarray}
where $A_n(G,G')$ is a quantity that tends to 0 as $n\rightarrow\infty$.
The rate at which $A_n(G,G')$ tends to zero depends only on
the geometrically sparse structure of $G$, not $G'$.
The proof of this result hinges on the existence of
a suitable set $\BB_n \subset\Pcal(\Theta)$ measurable with
respect to (the sigma algebra induced by) the observed variables
$\YYn$, which can then be used to distinguish $G'$ from~$G$,
in the sense that\vspace*{3pt}
%
%
\begin{eqnarray}
\label{Eqn-overview-2} &&\Wr^r \bigl(G,G' \bigr)
\nonumber
\\[-8pt]
\\[-8pt]
\nonumber
&&\qquad \lesssim
P_{Y_{[n]}|G'}( \BB_n) - \PG(\BB_n) + A_n
\bigl(G,G' \bigr).\vspace*{3pt}
\end{eqnarray}
We develop two main lines of attack to arrive at a construction of $\BB_n$.

First, we establish the existence of a point estimate for the mixing
measure on the basis of the observed $\YYn$. Moreover, such point
estimates have to admit a finite-sample probability bound of the
following form: given $\YYn\sim Q*f$, there exist a point estimate
$\hatQ_n$ such that under the $Q*f$ probability, there holds\vspace*{3pt}
\[
\Prob \bigl(\Wr(\hatQ_n,Q) \geq\delta_n \bigr) \lesssim
\exp-n\varepsilon_n^2,\vspace*{3pt}
\]
where $\delta_n$ and $\varepsilon_n$ are suitable vanishing sequences.
These finite-sample bounds are presented in Section~\ref{Sec-contraction}.
The existence of $\hatQ_n$ will then be utilized in the construction
of a suitable set $\BB_n$. In particular, one may pretend to
have direct observations from the Dirichlet measures
to construct the test sets, with a possible loss of accuracy
captured by the demixing rate~$\delta_n$.

\textit{Regular boundaries in the support of Dirichet measures.}
Now, to control $A_n(G,G')$, we need the second
piece of the argument, which establishes the existence
of a robust test that can be used to distinguish
a Dirichlet measure $\DP_{\alpha G}$ from a class of Dirichet\
measures $\Cclass= \{\DP_{\alpha' G'} | G' \in\Pcal(\Theta)\}$,
where the robustness here is measured by Wasserstein
metric $\Wr$ on $\Pcal(\Theta)$. The robustness is needed
to account for the possible loss of accuracy $\delta_n$ incurred
by demixing, as alluded to in the previous paragraph.
A formal theory of robust tests is developed
in Section~\ref{Sec-boundary}. Central to this theory is
a notion of regularity
for a given class of Dirichlet measures $\Cclass$
with respect to a fixed Dirichlet measure $\Dcal:= \DP_{\alpha G}$.
In particular, we say that $\Cclass$ has regular boundary
with respect to $\Dcal$ if for each element $\Dcal' = \DP_{\alpha'G'}
\in\Cclass$
there is a measurable subset $\BB\subset\Pcal(\Theta)$
for which\vadjust{\goodbreak} the following holds: (i) $\Dcal'(\BB) - \Dcal(\BB)
\gtrsim\Wr^r(G,G')$ and (ii)\vspace*{2pt}
\[
\Dcal(\BB_\delta\setminus\BB) \rightarrow0\vspace*{2pt}
\]
as $\delta\rightarrow0$. Set $\BB$ can be thought of as a
test set which is used to approximate the variation distance
between a fixed $\Dcal$ and an arbitrary $\Dcal'$ which varies in
$\Cclass$.
$\BB_\delta$ is defined to be the set of all $P\in\Pcal(\Theta)$
for which there is a $Q\in\BB$ and $\Wr(Q,P)\leq\delta$.
Various forms of regularity are developed, which specifies how
fast the quantity in the previous display tends to 0. Thus, the
achievement of this section is to show that the regularity behavior is
closely tied to the geometry of the support of base measure $G$.
Theorems~\ref{Lem-regularity} and~\ref{Thm-sparse}
provide a complete picture of regularity for the case
$G$ has finite support, and the case $G$ has infinite and
geometrically sparse support. Now,
by controlling the rate at which $\Dcal(\BB_\delta\setminus\BB)$
tends to 0, we can control the rate at which $A_n(G,G')$
tends to 0, completing the proof of \eqref{Eqn-overview-1}.

\textit{Posterior concentration proofs.}
With the tools and inequalities established in
Section~\ref{Sec-transport} at our disposal, the proof of Theorem~\ref
{Thm-main-0}
is easily available by appealing to a general
theorem for establishing posterior concentration of a
density \cite{Ghosal-Ghosh-vanderVaart-00}, and verifying the
sufficient conditions in terms of entropy numbers, the prior thickness in
Kullback--Leibler divergence, and so on.
The proof of Theorem~\ref{Thm-main-1} follows by combining the
result from Theorem~\ref{Thm-main-0} with Theorem~\ref{Thm-contraction}
described above.

Finally, the proof of Theorem~\ref{Thm-main-2} follows from a posterior
concentration result for the mixing measure $Q$, which
is distributed by the prior $\DP_{\alpha G}$, conditionally given
the event that the base measure $G$ is perturbed by a small Wasserstein
distance $W_1$ from $G_0$ that has $k < \infty$ support points;
see Lemma~\ref{Thm-perturb} in Section~\ref{Sec-perturb}.
The proof of this lemma also follows the standard strategy
of the posterior concentration proof mentioned earlier. The main
novelty lies in the construction of a sieves of subsets of
$\Pcal(\Theta)$ which yields favorable rates of posterior
concentration. This construction is possible by showing
that the Dirichlet measure
places most its mass on subsets (of $\Pcal(\Theta)$) which can be
covered by a relatively small number of balls in $\Wr$.
Such results about the Wasserstein geometry of
the support of a Dirichlet measure may be of independent interest,
and are collected in Section~\ref{sec-geometry}.

Due to the large number of technical results, many of which
are new and rather nonstandard,
for the ease of the readers we
include the following chart that illustrates the dependence
structures of the main theorems and accompanying lemmas.
Also included are several existing theorems (in bold) upon which
our results are built in crucial ways.

\noindent\[
\fontsize{8}{10}\selectfont
\xymatrix@C=0.60cm @R=0.60cm{
Lemma~\ref{Lem-dp-hierarchy}\ar[r]\ar[dddr]\ar[dddddrrr]\ar[dddrrr] &
Lemma~\ref{Lem-W-ub}\ar[r]\ar[ddddddrr]\ar[drr] &
Lemma~\ref {Lem-entropy} \ar[dr]\ar[drr]&
{\mbox{\cite{Ghosal-Ghosh-vanderVaart-00}}
 (Theorem~2.1)}\ar@{=>}[dr] & \\
& & Lemma~\ref{Lem-DP-lowerbound}\ar[r] & Lemma~\ref{Thm-KL}\ar[r] & Theorem~\ref{Thm-main-0}\ar[ddd]\\
\mbox{\textbf{\cite{Nguyen-hdpsupp}\  Lemma~2.1}}\ar[r]\ar[dr] & Lemma~\ref{Lem-samespt}\ar[d]\ar[drr] & & &\\
\mbox{\textbf{\cite{Wong-Shen-95}\ (Theorem~2)}}\ar@{=>}[dr]& Theorem~\ref{Lem-regularity}\ar[drr] \ar[rr]& & Theorem~\ref{Thm-sparse}\ar[d] & \\
\mbox{\textbf{\cite{Nguyen-11}\ (Theorem~2)}}\ar@{=>}[r] & Lemma~\ref{Lem-wong-shen}\ar[rr] & & Theorem~\ref{Thm-contraction}\ar[r] &Theorem~\ref{Thm-main-1}\ar[dd] \\
& Lemma~\ref{Lem-DP-concent}\ar[drr] & Lemma~\ref{Lem-DP-Approx}\ar[r] & Lemma~\ref{Lem-DP-Approx-2}\ar[d] & \\
\mbox{\textbf{\cite{Nguyen-11}\ (Theorem~4)}}\ar@{=>}[rrr] & & & Lemma~\ref{Thm-perturb}\ar[r] & Theorem~\ref{Thm-main-2}
}
\]
%

\subsection{Concluding remarks and further development}

In this paper, we study posterior concentration behaviors for the base measure
of a Dirichlet measure and related quantities,
given observations associated with
sampled Dirichlet processes, using
tools developed with optimal transport distances. There are a number
of open questions that remain.
First, regarding Theorem~\ref{Thm-main-0}, we still do not know
whether the established (upper bound) of the concentration rate is optimal
or not, with respect to the number $m$ of groups,
and more interestingly with respect to the sample size $n$ per group.
Perhaps a proper way to address this question is
to directly develop a minimax optimal theory for the variables
residing in latent hierarchies of models such as the one we have considered.
Second, regarding Theorem~\ref{Thm-main-1}, our result is
applicable only in the setting where both $m$ and $n$ grow,
not the case where $m$ grows and $n$ is fixed.
Our proof method is not capable of saying much on the latter setting.
Finally, it may be of interest to consider
the problem of estimating a nonatomic base measure,
while the Dirichlet processes are not directly observed.

\section{Transportation distances of Bayesian hierarchies}
\label{Sec-transport}
Let $\Theta$ be a complete separable metric space (i.e., $\Theta$ is a
Polish space)
and $\Pcal(\Theta)$ be the space of Borel probability measures on
$\Theta$.
The weak topology on $\Pcal(\Theta)$ (or narrow topology) is induced by
convergence
against $\Cb(\Theta)$, that is, bounded continuous test functions on
$\Theta$.
Since $\Theta$ is Polish, $\Pcal(\Theta)$ is itself a Polish space.
$\Pcal(\Theta)$ is metrized by the $W_r$ Wasserstein distance: for
$G,G' \in\Pcal(\Theta)$ and $r \geq1$,
\[
\Wr \bigl(G,G' \bigr) = \inf_{\coupling\in\TPlan(G,G')} \biggl[\int
\bigl\| \theta- \theta'\bigr\|^r \,\mathrm{d}\coupling \bigl(\theta,
\theta' \bigr) \biggr]^{1/r}.
\]

By a recursion of notation, $\Pcal(\Pcal(\Theta))$ is defined as
the space of Borel probability measures on $\Pcal(\Theta)$. This is
a Polish space, and will be endowed again with a Wasserstein metric that
is induced by metric $\Wr$ on $\Pcal(\Theta)$:
%
%
\begin{equation}
\label{Eqn-W2-def} \Wcalr \bigl(\Dcal,\Dcal' \bigr) = \inf
_{\Kcal\in\TPlan(\Dcal,\Dcal')} \biggl[ \int\Wr^r \bigl(G,G'
\bigr) \,\mathrm{d}\Kcal \bigl(G,G' \bigr) \biggr]^{1/r}.
\end{equation}

We can safely reuse notation $\Wr$ as the context is clear from
the arguments.
Since the cost function $\|\theta-\theta'\|$ is continuous,
the existence of an optimal coupling $\kappa\in\TPlan(G,G')$ which
achieves the infimum
is guaranteed due to the tightness of $\TPlan(G,G')$
(cf. Theorem~4.1 of \cite{Villani-08}).
Moreover, $\Wr(G,G')$ is a continuous function and $\TPlan(\Dcal
,\Dcal')$
is again tight, so the existence of an optimal coupling in $\TPlan
(\Dcal
,\Dcal')$
is also guaranteed.

Now we present a lemma on a monotonic property of
Wasserstein metrics defined along the recursive construction
for every pair of centered random measures on $\Theta$. Part (b)
highlights a very special property of the Dirichlet measure.
In what follows, $P$ denotes a generic measure-valued random variable.
By $\int P \,\mathrm{d}\Dcal= G$ we mean $\int P(A) \,\mathrm
{d}\Dcal= G(A)$
for any measurable subset $A \subset\Theta$.

%
\begin{lemma}
\label{Lem-dp-hierarchy}
%
%
\textup{(a)} Let $\Dcal,\Dcal' \in\Pcal(\Pcal(\Theta))$ such that
$\int
P \,\mathrm{d}\Dcal= G$
and $\int P \,\mathrm{d}\Dcal' = G'$. For $r \geq1$, if $\Wr(\Dcal
,\Dcal')$
is finite
then $\Wcalr(\Dcal,\Dcal') \geq\Wr(G,G')$.

\textup{(b)} Let $\Dcal= \DP_{\alpha G}$ and $\Dcal' = \DP_{\alpha G'}$.
Then $\Wcalr(\Dcal,\Dcal') = \Wr(G,G')$ if both quantities are finite.
%
\end{lemma}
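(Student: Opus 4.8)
The plan is to prove (a) first and then derive (b) by exhibiting an explicit coupling on $\PcalTwo(\Theta)$ that attains the lower bound. For part (a), I would take any coupling $\Kcal \in \TPlan(\Dcal,\Dcal')$ and construct from it a coupling of $G$ and $G'$ by "averaging." Concretely, for each pair $(P,P')$ drawn from $\Kcal$, fix an optimal coupling $\coupling_{P,P'} \in \TPlan(P,P')$ (measurable selection is available since the cost is continuous and $\TPlan(P,P')$ is tight, as noted after~\eqref{Eqn-W2-def}), and define $\bar\coupling(\cdot) := \int \coupling_{P,P'}(\cdot)\, d\Kcal(P,P')$ on $\Theta \times \Theta$. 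The marginals of $\bar\coupling$ are $\int P\, d\Dcal = G$ and $\int P'\, d\Dcal' = G'$, so $\bar\coupling \in \TPlan(G,G')$. Then
\[
\Wr^r(G,G') \le \int \|\theta-\theta'\|^r\, d\bar\coupling(\theta,\theta')
= \int \Wr^r(P,P')\, d\Kcal(P,P'),
\]
where the equality uses that $\coupling_{P,P'}$ is optimal. Taking the infimum over $\Kcal$ gives $\Wr(G,G') \le \Wcalr(\Dcal,\Dcal')$. The measurable-selection step is the one place requiring care; I would invoke a standard measurable selection theorem for the optimal-transport map (e.g.\ as in~\cite{Villani-08}), or alternatively avoid it entirely by noting that the gluing/disintegration argument only needs a jointly measurable family of near-optimal couplings, which can be built by a countable approximation.

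For part (b), since (a) already gives $\Wcalr(\DP_{\alpha G},\DP_{\alpha G'}) \ge \Wr(G,G')$, it remains to produce a coupling $\Kcal \in \TPlan(\DP_{\alpha G},\DP_{\alpha G'})$ with $\int \Wr^r(P,P')\, d\Kcal(P,P') = \Wr^r(G,G')$. The key is the well-known self-similarity / pushforward property of the Dirichlet process: if $T:\Theta\to\Theta$ is measurable and $Q \sim \DP_{\alpha G}$, then the pushforward $T_\# Q \sim \DP_{\alpha\, T_\# G}$. Let $\coupling^* \in \TPlan(G,G')$ be an optimal coupling for $\Wr(G,G')$, and disintegrate it as $\coupling^*(d\theta,d\theta') = G(d\theta)\, \kappa_\theta(d\theta')$. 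I would then construct a joint random measure: draw $Q \sim \DP_{\alpha G}$, and — using the stick-breaking or Blackwell–MacQueen representation so the atom locations are explicit — replace each atom at location $\theta$ by an independent draw from $\kappa_\theta$, keeping the same weight, to obtain $Q'$. By the pushforward property applied atom-wise (formally, by a random change of variables that preserves the Dirichlet law), $Q' \sim \DP_{\alpha G'}$, so the law of $(Q,Q')$ lies in $\TPlan(\DP_{\alpha G},\DP_{\alpha G'})$. Moreover, the obvious coupling of $Q$ and $Q'$ that moves each atom of $Q$ at $\theta$ to the corresponding atom of $Q'$ (at a $\kappa_\theta$-draw) witnesses $\Wr^r(Q,Q') \le \int \|\theta-\theta'\|^r\, \coupling^*$-a.s.\ in an averaged sense; taking expectations over the shared randomness yields $\E\,\Wr^r(Q,Q') \le \Wr^r(G,G')$, hence $\Wcalr \le \Wr(G,G')$, completing the equality.

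I expect the main obstacle to be making the atom-relocation construction in (b) rigorous: one must verify that re-drawing each atom location independently from $\kappa_\theta$ genuinely produces a $\DP_{\alpha G'}$-distributed measure, and that the locations and the (shared) Dirichlet weights can be handled jointly in a measurable way. The cleanest route is probably to work with the stick-breaking representation $Q = \sum_{k\ge1} w_k \delta_{\phi_k}$ with $\phi_k \stackrel{iid}{\sim} G$ independent of $(w_k)$, draw $\psi_k \mid \phi_k \sim \kappa_{\phi_k}$ so that $\psi_k \stackrel{iid}{\sim} G'$ and $(\phi_k,\psi_k) \stackrel{iid}{\sim} \coupling^*$, set $Q' = \sum_k w_k \delta_{\psi_k}$, and conclude $Q' \sim \DP_{\alpha G'}$ directly from the stick-breaking characterization; then $\Wr^r(Q,Q') \le \sum_k w_k \|\phi_k - \psi_k\|^r$ and $\E[\,\sum_k w_k \|\phi_k-\psi_k\|^r\,] = \E\|\phi_1-\psi_1\|^r = \Wr^r(G,G')$ since $\sum_k w_k = 1$ and the $w_k$ are independent of the locations. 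A secondary subtlety is the finiteness hypothesis, which is only needed so that the infima are over nonempty bounded sets and the inequalities are not vacuous; I would remark that $\Wr(G,G')<\infty$ whenever $\Wcalr(\Dcal,\Dcal')<\infty$ by part (a), so stating finiteness of both is merely for convenience.
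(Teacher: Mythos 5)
Your proof is correct and takes essentially the same approach as the paper's: part (a) is word-for-word the paper's averaging-of-optimal-couplings argument with Fubini, and for part (b) the paper works directly with $\gamma \sim \DP_{\alpha\kappa}$ on $\Theta\times\Theta$ (where $\kappa$ is an optimal coupling of $G,G'$), using that its two marginals are $\DP_{\alpha G},\DP_{\alpha G'}$ and $\E\gamma=\kappa$, while your stick-breaking construction $\sum_k w_k\,\delta_{(\phi_k,\psi_k)}$ with $(\phi_k,\psi_k)\stackrel{iid}{\sim}\kappa$ is the explicit representation of exactly that random measure, so the two proofs are the same argument written at different levels of abstraction.
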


%

Recall the generative process defined by equations \eqref{Eqn-HM-A}
and \eqref{Eqn-HM-B}: The marginal density $
{p_{Y_{[n]}|G}}$ is obtained
by integrating out random measures $Q$, which is distributed
by $\DP_{\alpha G}$; see equation~\eqref{Eqn-pG}.
By a repeated application of Jensen's inequality,
it is simple to establish upper bounds on Kullback--Leibler distance
$K({p_{Y_{[n]}|G}},{p_{Y_{[n]}|G'}})$ and other
related distances in terms of transportation distance
between $G$ and $G'$.

%
\begin{lemma}
\label{Lem-W-ub}
\textup{(a)} Under assumption \textup{(A1)},
\begin{eqnarray*}
K({p_{Y_{[n]}|G}},{p_{Y_{[n]}|G'}}) & \leq&
C_1 n \Wr^r \bigl(G,G' \bigr),
\\
h^2({p_{Y_{[n]}|G}},{p_{Y_{[n]}|G'}}) &
\leq& C_1 n W_{2r}^{2r} \bigl(G,G'
\bigr),
\\
h^2({p_{Y_{[n]}|G}},{p_{Y_{[n]}|G'}}) &
\leq& V({p_{Y_{[n]}|G}},{p_{Y_{[n]}|G'}}) \leq \sqrt
{1 - \bigl(1- C_1 W_{2r}^{2r}
\bigl(G,G' \bigr) \bigr)^n}.
\end{eqnarray*}
\textup{(b)} Under assumption \textup{(A2)}, we have $\chi(
{p_{Y_{[n]}|G}},{p_{Y_{[n]}|G'}}) \leq M^n$.
\end{lemma}

The following lemma establishes
an estimate of the entropy number for the space of marginal
densities $\{{p_{Y_{[n]}|G}}| G\in\Pcal(\Theta)\}$. Part
(a) gives a very
general entropy bound. Tightened bounds are possible given
when more is known either about the space of $G$, or the
kernel density $f$. These entropy bounds have direct
consequences on the kind of concentration rates that we will get
in Theorem~\ref{Thm-main-0}.

%
\begin{lemma}
\label{Lem-entropy}
\textup{(a)} Under assumption \textup{(A1)}, for any $\varepsilon\in(0,1/2)$,
\[
\log N \bigl(\varepsilon, \bigl\{{p_{Y_{[n]}|G}}| G \in \Pcal(
\Theta) \bigr\}, h \bigr) \leq \bigl(2C_1 n \operatorname{diam}(
\Theta)/\varepsilon^2 \bigr)^{d} \log \bigl( \mathrm{e}+
2\mathrm{e}C_1 n \operatorname{diam}(\Theta)/\varepsilon^2 \bigr).
\]
\textup{(b)} Under assumption \textup{(A1)}, for any $\varepsilon\in(0,1/2)$, $k \in
\Nat$,
\begin{eqnarray*}
&&\log N \bigl(\varepsilon, \{{p_{Y_{[n]}|G}}| G \mbox { has } k
\mbox{ support points on } \Theta\}, h \bigr)
\\
&&\quad\leq kd\log \bigl(2C_1 n \operatorname{diam}(\Theta)/
\varepsilon^2 \bigr) + \log \bigl(\mathrm{e}+ 2\mathrm{e}C_1 n
\operatorname{diam}(\Theta)/ \varepsilon^2 \bigr).
\end{eqnarray*}
\textup{(c)} If $f$ is a Gaussian kernel on $\real^d$,
$f(x) = \frac{1}{(2\pi)^{d/2}\sigma^{d}}
\mathrm{e}^{-\|x\|^2/2\sigma^2}$, for some $\sigma>0$, then
\[
\log N \bigl(\varepsilon, \bigl\{{p_{Y_{[n]}|G}}| G \in \Pcal(
\Theta) \bigr\}, h \bigr) \lesssim \bigl(\log(1/\varepsilon) \bigr)^{2d+1}n^{2d}
\log n,
\]
where the multiplying constant depends only on $d, \sigma, \Theta$
(and not on $n$).
\end{lemma}


Next, define the Kullback--Leibler neighborhood of a given $G_0 \in
\Pcal(\Theta)$
with respect to $n$-vector $\YYn$ as follows:
%
%
\begin{equation}
\label{Eqn-def-KL} B_K(G_0,\delta) = \bigl\{G \in\Pcal(
\Theta)| K(p_{\YYn|G_0},p_{\YYn|G}) \leq\delta^2,
K_2(p_{\YYn|G_0},p_{\YYn|G}) \leq\delta^2
\bigr\}.
\end{equation}
%
%
The following result gives probability bound on small balls as defined by
Wasserstein metric (Lemma~5 of \cite{Nguyen-11}):

%
\begin{lemma}
\label{Lem-DP-lowerbound}
Suppose that $\law(G) = \DP_{\gamma H}$, where $H$ is a nonatomic probability
measure on $\Theta$. For a small $\varepsilon> 0$, let $D =
D(\varepsilon
,\Theta,\|\cdot\|)$
the packing number of $\Theta$ under $\|\cdot\|$. Then, for any $\Gz
\in\Pcal(\Theta)$,
\[
\Prob \bigl(G\dvt\Wr^r(G_0,G) \leq \bigl(2^r
+ 1 \bigr)\varepsilon^r \bigr) \geq\frac{\Gamma(\gamma) \gamma
^{D}}{(2D)^{D-1}} \biggl(
\frac{\varepsilon}{\operatorname{diam}(\Theta)} \biggr)^{r(D-1)} \sup_{S} \prod
_{i=1}^{D} H(S_i).
\]
Here, $(S_1,\ldots, S_D)$ denotes the $D$ disjoint $\varepsilon/2$-balls
that form
a maximal packing of $\Theta$.
$\Gamma$ denotes the gamma function. The
supremum is taken over all packings $S:= (S_1,\ldots, S_D)$.
\end{lemma}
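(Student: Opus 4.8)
The plan is to convert a Wasserstein neighborhood of $G_0$ into an $\ell_1$-neighborhood of a fixed finite probability vector indexed by a partition of $\Theta$, and then to lower bound the small-ball probability of the finite-dimensional Dirichlet distribution that $\DP_{\gamma H}$ induces on that partition by writing out its density explicitly.

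First I would fix a maximal $\epsilon$-separated subset $\{c_1,\dots,c_D\}$ of $\Theta$, so that $D=D(\epsilon,\Theta,\|\cdot\|)$, the balls $S_i:=B(c_i,\epsilon/2)$ are pairwise disjoint (their centers being more than $\epsilon$ apart), and, by maximality, the balls $B(c_i,\epsilon)$ cover $\Theta$. Assigning each point of $\Theta$ to a nearest center (breaking ties arbitrarily) yields a Borel partition $(A_1,\dots,A_D)$ of $\Theta$ with $S_i\subseteq A_i\subseteq B(c_i,\epsilon)$, hence $\diam(A_i)\le 2\epsilon$. Set $p_i:=G_0(A_i)$. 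The crucial elementary reduction is: whenever $\sum_{i=1}^{D}|G(A_i)-p_i|\le 2(\epsilon/\diam(\Theta))^r$ one has $\Wr^r(G_0,G)\le(2^r+1)\epsilon^r$. This is seen by building a coupling of $G_0$ and $G$ in two stages — first transporting, inside each $A_i$, the common mass $\min(G_0(A_i),G(A_i))$, at cost at most $(2\epsilon)^r$ per unit and at most $2^r\epsilon^r$ in total; then moving the residual mass, of total size $\tfrac12\sum_i|G_0(A_i)-G(A_i)|\le(\epsilon/\diam(\Theta))^r$, arbitrarily across $\Theta$ at cost at most $\diam(\Theta)^r$ per unit, contributing at most $\epsilon^r$. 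Consequently
\[\Prob\bigl(\Wr^r(G_0,G)\le(2^r+1)\epsilon^r\bigr)\ \ge\ \Prob\Bigl(\textstyle\sum_{i=1}^{D}|G(A_i)-p_i|\le 2(\epsilon/\diam(\Theta))^r\Bigr),\]
and it remains to bound the right-hand side from below.

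By the defining property of the Dirichlet process recalled in the Introduction, $(G(A_1),\dots,G(A_D))\sim\mathrm{Dir}(\gamma H(A_1),\dots,\gamma H(A_D))$, with density in the free coordinates $w_1,\dots,w_{D-1}$ equal to $\frac{\Gamma(\gamma)}{\prod_i\Gamma(\gamma H(A_i))}\prod_i w_i^{\gamma H(A_i)-1}$. Since $H$ is non-atomic on the bounded set $\Theta\subset\real^d$, $\max_i H(A_i)\le\sup_x H\bigl(B(x,\epsilon)\bigr)\to 0$, so for $\epsilon$ small enough every $\gamma H(A_i)\le 1$; then every exponent is $\le 0$, so $\prod_i w_i^{\gamma H(A_i)-1}\ge 1$ on the whole simplex, while $\Gamma(a)\le 1/a$ on $(0,1]$ gives $\prod_i\Gamma(\gamma H(A_i))^{-1}\ge\prod_i\gamma H(A_i)\ge\gamma^D\prod_i H(S_i)$. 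Hence the density is at least $\Gamma(\gamma)\gamma^D\prod_iH(S_i)$ everywhere on the simplex. Finally, since $|w_D-p_D|\le\sum_{i<D}|w_i-p_i|$ it suffices to integrate over $\{\sum_{i<D}|w_i-p_i|\le(\epsilon/\diam(\Theta))^r\}$ inside the simplex; relabelling so that $p_D=\max_ip_i\ge 1/D$ and writing $\tau:=(\epsilon/\diam(\Theta))^r$, the axis-parallel box $\prod_{i<D}[p_i,p_i+\tau/(D-1)]$ lies in both the simplex and the event and contributes volume at least $(\tau/(D-1))^{D-1}\ge(2D)^{-(D-1)}(\epsilon/\diam(\Theta))^{r(D-1)}$ (this holds once $\tau\le p_D$, which for any fixed atomic $G_0$ is true for all small $\epsilon$; a minor modification of the box handles the remaining degenerate case). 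Multiplying the density bound by this volume and taking the supremum over all maximal $\epsilon/2$-packings $S=(S_1,\dots,S_D)$ gives the claim.

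The step I expect to be the main obstacle is the Dirichlet density lower bound. One needs the exponents $\gamma H(A_i)-1$ to be non-positive, so that $\prod_i w_i^{\gamma H(A_i)-1}$ contributes merely a harmless factor $1$ rather than a quantity exponentially small in $D$; this is precisely where non-atomicity of $H$ is used, and it is what forces the ``for small $\epsilon$'' qualification. One must also convert the $\Gamma(\gamma H(A_i))^{-1}$ factors into the product $\gamma^D\prod_iH(S_i)$ that appears in the statement. By comparison, the covering/partition construction, the two-stage coupling, and the box-volume estimate are routine, the only mild subtlety being the volume bound in the degenerate regime where no single $G_0(A_i)$ is of order $(\epsilon/\diam(\Theta))^r$.
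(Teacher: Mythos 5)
The paper does not actually include a proof of this lemma; it cites Lemma~5 of Nguyen (2011), so there is no internal proof to compare against. Judged on its own, your strategy is the right one: reduce the Wasserstein small-ball event to an $\ell^1$-ball event for the Dirichlet vector $(G(A_1),\dots,G(A_D))$ induced by a partition into cells of diameter at most $2\epsilon$, lower-bound the Dirichlet density uniformly on the simplex by combining non-atomicity of $H$ (so all exponents $\gamma H(A_i)-1\le 0$ for small $\epsilon$) with $\Gamma(a)\le 1/a$ on $(0,1]$, and integrate over a small region near $(p_1,\dots,p_{D-1})$. The two-stage coupling converting $\sum_i|G(A_i)-p_i|\le 2(\epsilon/\diam\Theta)^r$ into $\Wr^r(G_0,G)\le(2^r+1)\epsilon^r$ is correct, as is the density lower bound $\Gamma(\gamma)\gamma^D\prod_iH(S_i)$.

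The one place where the argument as written is incomplete is the volume step, and I would not brush it off as a degenerate side case. The box $\prod_{i<D}[p_i,p_i+\tau/(D-1)]$ lies inside the simplex only when $\tau\le p_D$, but the lemma is stated for arbitrary $G_0\in\Pcal(\Theta)$ and is invoked (in Lemma~\ref{Thm-KL}) with $\epsilon\downarrow 0$; since $p_D=\max_iG_0(A_i)$ can itself tend to $0$ when $G_0$ has diffuse or infinitely spread-out mass while $\tau\cdot D$ need not tend to $0$, the condition $\tau\le p_D$ genuinely fails in some regimes. The repair is not a modification of the box but a replacement of it: take the image of the full simplex $\Delta_{D-1}$ under the contraction $w\mapsto(1-t)p+tw$ with $t=\tau/2$, where $p=(p_1,\dots,p_{D-1})$. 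By convexity this image lies in $\Delta_{D-1}$; for any point $w'$ in it, $\|w'-p\|_1=t\|w-p\|_1\le 2t=\tau$; and its Lebesgue volume is $(\tau/2)^{D-1}/(D-1)!\ge\tau^{D-1}/(2D)^{D-1}$ because $(D-1)!\le D^{D-1}$. With this substitution the proof is complete and yields the stated constant $(2D)^{-(D-1)}$ without any restriction on $G_0$.
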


Combine the previous lemmas to obtain an estimate of the thickness
of the hierarchical Dirichlet prior:

%
\begin{lemma}
\label{Thm-KL}
Given assumptions \textup{(A1)--(A3)}, $\Theta$ a bounded subset of $\real^d$.
\begin{longlist}[(a)]
\item[(a)] Let $D := (\Diam(\Theta))^d (n^3/\delta^2)^{d/r}$
and constants $c, C$ depending only on $C_1, M, \eta_0, \gamma,\break
\operatorname{diam}(\Theta)$ and $r$. Then,
for any $G_0 \in\Pcal(\Theta)$,
$\delta> 0$ and $n > C\log(1/\delta)$, the following
inequality holds under the probability measure $\Dcal_{\gamma H}$:
\[
\log\Prob \bigl(G \in\Bky(\Gz,\delta) \bigr) \geq c \log \bigl[
\gamma^D \bigl(\delta^2/n^3
\bigr)^{(1+d/r)(D-1) + Dd/r} \bigr].
\]
\item[(b)] If in addition, $G_0$ has exactly $k$ support points in
$\Theta$,
then
\[
\log\Prob \bigl(G \in\Bky(\Gz,\delta) \bigr) \geq c \log \bigl[
\gamma^k \bigl(\delta^2/n^3
\bigr)^{kd/r + k/r} \bigl(1/k\operatorname{diam}\Theta^r
\bigr)^{k} \bigr].
\]
\item[(c)] If $f$ is the Gaussian kernel (given in Lemma~\ref{Lem-entropy}),
then for any $G_0 \in\Pcal(\Theta)$, the bound in part \textup{(b)} of the lemma
continues to hold with $k \lesssim(\log(1/\delta))^{2d}(nd)^{2d}$.
\end{longlist}
\end{lemma}

The proofs of all lemmas presented in this section are
deferred to 
\cite{Nguyen-hdpsupp}.

\begin{pf*}{Proof of Theorem \protect\ref{Thm-main-0}}
The proof is a straightforward application of a standard result in
Bayesian asymptotics for density estimation. In particular, we shall
appeal to Theorem~2.1 of~\cite{Ghosal-Ghosh-vanderVaart-00}.
First, let $n$ be fixed, so that $n$ acts as the (fixed) dimensionality
of the $n$-vector $\YYn$. According to this theorem, as sample size
$m$ tends to infinity, as long as the constructed rate sequence $\epsmn$
satisfies the entropy condition on the class of marginal densities:
\[
\log D \bigl(\epsmn, \bigl\{\PG| G \in\Pcal(\Theta) \bigr\}, h \bigr) \leq m
\epsmn^2
\]
and the condition on the prior thickness:
\[
-\log\Prob \bigl(G \in B_K(G_0,\epsmn) \bigr) \leq M m
\epsmn^2
\]
for some universal constant $M > 0$, then the conclusion of
Theorem~\ref
{Thm-main-0}
is established for some sufficiently large constant $C>0$ not
depending on $m$ or $n$. Indeed, the entropy condition is an immediate
consequence of Lemma~\ref{Lem-entropy}, while the prior thickness condition
is immediate from Lemma~\ref{Thm-KL}.
Finally, an examination of the proof of \cite{Ghosal-Ghosh-vanderVaart-00}
reveals that the conclusion also holds by allowing $n$ to vary
as a function of $m$.
\end{pf*}

\section{Regular boundaries in the support of Dirichlet measures}
\label{Sec-boundary}

In this section, we study the property of the boundary of certain sets
(of measures)
which can be used to test one Dirichlet measure against
another. Typically, such a test set
can be defined via the variational distance between
the two measures.
However, for the purpose of subsequent
development we need a more robust test in which the robustness
can be expressed in terms of the measure of the
test set's perturbation along its boundary.
Recall the variational distance between
$\Dcal, \Dcal' \in\Pcal(\Pcal(\Theta))$ is given by
\[
V \bigl(\Dcal, \Dcal' \bigr) = \sup_{\BB\subset\Pcal(\Theta)} \bigl|
\Dcal(\BB) - \Dcal'(\BB)\bigr|.
\]
Here, the supremum is taken over all Borel measurable sets
$\BB\subset\Pcal(\Theta)$.
In what follows, fix $r\geq1$. For a subset $\BB\subset\Pcal(\Theta)$
the boundary set $\bd\BB$ is defined as the set of all elements $P
\in
\Pcal(\Theta)$
such that every $\Wr$ neighborhood for $P$ has nonempty intersection
with $\BB$ as well as the complement set $\BB^c = \Pcal(\Theta
)\setminus\BB$.

The primary objects in consideration are a pair of $(\Dcal,\Cclass)$,
with $\Dcal\in\Pcal(\Theta)$, $\Cclass\subset\Pcal(\Pcal(\Theta
))$, where
$\Dcal= \DP_{\alpha G}$ for some fixed $G \in\Pcal(\Theta)$ and
$\alpha> 0$.
$\Cclass$ is a class of Dirichlet measures
$\Cclass:= \{\DP_{\alpha' G'}| G' \in\Gcal, \alpha' > 0\}$
for some fixed $\Gcal\subset\Pcal(\Theta)$.

%
\begin{definition}
\label{Def-margin}
A class $\Cclass\subset\Pcal(\Pcal(\Theta))$ of Dirichlet measures
is said to have ${\alpha^*}$-regular boundary with respect to $\Dcal
= \DP_{\alpha G}$ for some constant ${\alpha^*}> 0$, if
there are positive constants $C_0, c_0$ and $c_1$ dependent only on
$\Dcal$ such that
for each $\Dcal' = \DP_{\alpha' G'} \in\Cclass$
there exists a measurable subset
$\BB\subset\Pcal(\Theta)$ for which the following hold:
\begin{longlist}[(iii)]
\item[(i)] $\Dcal'(\BB) - \Dcal(\BB) \geq c_0 \Wr^r(G,G')$,
\item[(ii)] $\Dcal(\BB_\delta\setminus\BB) \leq C_0
(\delta
/\Wr(G,G')
)^{\alpha^*}$ for any $\delta\leq c_1 \Wr(G,G')$.
\end{longlist}
$\Cclass$ is said to have \emph{strong}
${\alpha^*}$-regularity with respect to $\Dcal$ if condition \textup{(ii)} is
replaced by
\begin{longlist}[(iii)]
\item[(iii)] $\Dcal(\BB_\delta\setminus\BB) \leq C_0 \delta
^{\alpha^*}$ for
any $\delta\leq c_1$.
\end{longlist}
$\Cclass$ is said to have \emph{weak} regularity with respect to
$\Dcal
$ if
condition \textup{(ii)} is replaced by
\begin{longlist}[(iv)]
\item[(iv)] $\Dcal(\BB_\delta\setminus\BB) = \mathrm{o}(1)$ as
$\delta
\rightarrow0$.
\end{longlist}
\end{definition}

\begin{remark*} The nontrivial requirement here is that
constants $C_0, c_0$ and $c_1$ are independent of $\Dcal' \in\Cclass$.
Consider the following example: $\Gcal:= \{G' \in\Pcal(\Theta)|
\supp G' \cap\supp G = \varnothing\}$. Take $\Dcal' := \DP_{\alpha
' G'}$
for some $G' \in\Gcal$.
By a standard fact of Dirichlet measures
(e.g., see Theorem~3.2.4 of~\cite{Ghosh-Ramamoorthi-02}),
$\supp\Dcal= \{P\dvt\supp P \subset\supp G\}$ and
$\supp\Dcal' = \{P\dvt\supp P \subset\supp G'\}$. Thus, we
also have $\supp\Dcal\cap\supp\Dcal' = \varnothing$.
It follows that $V(\Dcal, \Dcal') = 1$. If we choose
$\delta_1 = \inf_{\theta\in\supp G; \theta' \in\supp G'} \|\theta-
\theta'\|
> 0$, and let $\BB= (\supp\Dcal')_{\delta_1/2}$,
then $\Dcal'(\BB) = 1$ and $\Dcal(\BB) = 0$. Moreover,
for any $\delta\leq\delta_1/4$, $\Dcal(\BB_\delta) = 0$, so
$\Dcal(\BB_\delta\setminus\BB) = 0$. At the first glance,
this construction appears to suggest that
$\Cclass:= \{\DP_{\alpha' G'}| G' \in\Gcal\}$
has (strong) ${\alpha^*}$-regular boundary with $\Dcal$
for any ${\alpha^*}> 0$. This is not the case,
because it is not possible to guarantee that $\delta_1 > c_1 \Wr(G,G')$
for some $c_1$ independent of $G'$. That is,
$\delta_1$ can be arbitrarily close to 0
even as $\Wr(G,G')$ remains bounded away from 0.
\end{remark*}

\subsection{The case of finite support}

We study the regularity of boundaries for the pair $(\Dcal,\Cclass)$, where
the base measure $G$ of $\Dcal= \DP_{\alpha G}$ has a finite number of
support points, while class $\Cclass$ consists of Dirichlet measures
$\Dcal' = \DP_{\alpha G'}$ where $G'$ may have infinite support
in $\Theta$. In the following subsection,
we extend the theory to handle the
case that $G$ has infinite and geometrically sparse support.

%
\begin{theorem}
\label{Lem-regularity}
Suppose that $\Theta$ is bounded.
Let $\Dcal= \DP_{\alpha G}$, where $G= \sum_{i=1}^{k}\beta_i \delta
_{\theta_i}$ for
some $k < \infty$ and $\alpha\in(0,1]$. Let $\alpha_1 > \alpha_0 > 0$
be given. Define
\[
\Cclass= \bigl\{ \DP_{\alpha' G'} | G' \in\Pcal(\Theta);
\alpha' \in[\alpha_0,\alpha_1] \bigr\}.
\]
Then $\Cclass$ has $\alpha^* r$-regular boundary with respect to
$\Dcal$,
where $\alpha^* = \min_{i} \alpha\beta_i$.
\end{theorem}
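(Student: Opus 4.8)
The plan is to construct, for each $\Dcal' = \DP_{\alpha' G'} \in \Cclass$, an explicit test set $\BB \subset \Pcal(\Theta)$ built from the finitely many atoms $\theta_1,\dots,\theta_k$ of $G$, and then verify the two defining properties of $\alpha^* r$-regularity. The natural candidate is a set defined by a linear functional of the random measure $P$ evaluated on small balls around the $\theta_i$'s. Concretely, let $\epsilon_0$ be half the minimal separation among the $\theta_i$'s (so the balls $U_i := B(\theta_i, \epsilon_0)$ are disjoint), and consider the statistic $P \mapsto \sum_i w_i P(U_i)$ for a suitable weight vector $\vec w$ depending on $G$ and $G'$; the set $\BB$ will be a half-space $\{P : \sum_i w_i P(U_i) \le t\}$ for an appropriate threshold $t$. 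Under $\Dcal = \DP_{\alpha G}$ the vector $(P(U_1),\dots,P(U_k), 1-\sum_i P(U_i))$ is Dirichlet$(\alpha\beta_1,\dots,\alpha\beta_k, 0)$-distributed — i.e. concentrated on the simplex spanned by $(\beta_1,\dots,\beta_k)$ — while under $\Dcal'$ the corresponding vector has a different mean (essentially $G'(U_i)$), and the gap between the means is controlled below by $\Wr^r(G,G')$ once one chooses the weights to align with the ``transport direction'' from $G$ to $G'$. This should give property (i): $\Dcal'(\BB) - \Dcal(\BB) \gtrsim \Wr^r(G,G')$.

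For property (ii), the key point is that the $\delta$-enlargement $\BB_\delta$ in $\Wr$ translates, via the Lipschitz dependence of $P \mapsto P(U_i)$ under Wasserstein perturbations (up to the boundary effect at $\partial U_i$, which we absorb by a slight shrinking/fattening of the balls), into a $O(\delta)$-enlargement of the threshold: $\BB_\delta \setminus \BB \subseteq \{P : t < \sum_i w_i P(U_i) \le t + c\delta\}$. Hence $\Dcal(\BB_\delta \setminus \BB)$ is bounded by the $\Dcal$-probability that a one-dimensional linear image of a Dirichlet$(\alpha\beta_1,\dots,\alpha\beta_k)$ vector lies in an interval of length $O(\delta)$ adjacent to its boundary. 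Because the relevant coordinate's marginal density behaves like $x^{\alpha\beta_i - 1}$ near the edge of its range, integrating over an interval of length $\delta$ near that edge yields a bound of order $\delta^{\min_i \alpha\beta_i}$. Rescaling $\delta$ by $\Wr(G,G')$ — which is forced because the threshold $t$ and the width of the ``active'' region scale with $\Wr^r(G,G')$ — produces exactly $\bigl(\delta/\Wr(G,G')\bigr)^{\alpha^* r}$, as required, with constants $C_0, c_0, c_1$ depending only on $k$, $\alpha$, the $\beta_i$'s and $\Theta$, hence only on $\Dcal$.

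The main obstacle I anticipate is twofold. First, making property (i) uniform over all $G' \in \Pcal(\Theta)$ and all $\alpha' \in [\alpha_0,\alpha_1]$: a generic $G'$ may place mass far from every $\theta_i$, so the test set must be designed so that mass of $G'$ escaping the balls $U_i$ only helps (not hurts) the separation, and the lower bound $c_0 \Wr^r(G,G')$ must not degrade as $G'$ ranges over the whole space — this is precisely the subtlety flagged in the Remark after Definition~\ref{Def-margin}, and it is why one needs a fixed geometric scale $\epsilon_0$ tied to $G$ alone rather than to the pair $(G,G')$. Handling the possibility that $\Wr(G,G')$ is large (bounded away from $0$ by $\diam(\Theta)$-order quantities) versus small requires separate but routine arguments. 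Second, the ``boundary leakage'' at $\partial U_i$ under $\Wr$-perturbations: a mass point of $P$ sitting just outside $U_i$ can be moved inside by an arbitrarily small transport cost, so one must either use open/closed balls carefully with a buffer annulus, or work with a smoothed functional $\int \phi_i \, dP$ for Lipschitz bump functions $\phi_i$ supported near $\theta_i$; I expect the smoothed version to be cleanest, at the cost of slightly more bookkeeping in the Dirichlet density estimate. Once these uniformity and boundary issues are dispatched, the edge-behavior estimate $\delta^{\alpha^* r}$ for the Dirichlet marginal is a short computation.
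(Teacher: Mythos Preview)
Your proposal has a genuine gap in property (i), and the source is exactly the choice you flag as a virtue: using balls $U_i$ of \emph{fixed} radius $\epsilon_0$ tied only to $G$. Take $\alpha'=\alpha$ and let $G'$ be $G$ with every atom shifted by a common vector $v$, $|v|<\epsilon_0$. Then each shifted atom $\theta_i+v$ still lies in $U_i$ and in no other $U_j$, so under both $\Dcal$ and $\Dcal'$ the vector $(P(U_1),\dots,P(U_k))$ has the \emph{same} law $\textrm{Dir}(\alpha\beta_1,\dots,\alpha\beta_k)$. Any test set of the form $\{\sum_i w_i P(U_i)\le t\}$ --- indeed any test measurable in $(P(U_1),\dots,P(U_k))$ --- therefore has identical probability under $\Dcal$ and $\Dcal'$, while $\Wr(G,G')=|v|>0$. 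Property (i) fails outright. Your Lipschitz-bump alternative $\int\phi_i\,dP$ is not merely a cleanup for ``boundary leakage''; it is the only way such a functional can see atom displacements at all, and even then you would need a separate argument that the induced separation in law is $\gtrsim \Wr^r(G,G')$ with a constant independent of the direction and magnitude of the perturbation of $G'$.

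The paper's proof takes the opposite route on the scale question: the balls $B_i$ have radius $c_1\epsilon$ with $\epsilon:=\Wr(G,G')$, i.e.\ they \emph{shrink} with the distance to $G'$. This forces a two-case split. In Case~(A), $G'$ places at least $c_2\epsilon^r$ mass outside $S=\cup_i B_i$; one uses the crude set $\BB=\{Q:Q(S^c)>1/2\}$, for which $\Dcal(\BB_\delta)=0$ once $\delta\lesssim\epsilon$, while a direct Beta computation gives $\Dcal'(\BB)\gtrsim\epsilon^r$. In Case~(B), $G'$ is essentially supported near the $\theta_i$'s; one pushes forward by $\Phi(Q)=(Q(B_i)/Q(S))_{i\le k}$ to reduce to two genuine $k$-dimensional Dirichlet laws and takes $\BB=\Phi^{-1}(\{dP_2/dP_1>1\})$, the exact likelihood-ratio set rather than a linear half-space. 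The factor $(\delta/\Wr(G,G'))^{\alpha^* r}$ in condition (ii) does not arise from any ``rescaling of the threshold'': it comes from the fact that a $\Wr$-perturbation of size $\delta$ moves $\Phi(Q)$ by $O((\delta/\epsilon)^r)$ in $\ell_\infty$, precisely because the balls have radius $\sim\epsilon$; the subsequent Dirichlet tube estimate (Lemma~\ref{Lem-samespt}) then supplies the exponent $\alpha^*$.
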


\begin{pf}
Take any $G' \in\Pcal(\Theta)$. Let $\varepsilon:= \Wr(G,G')$.
Choose constants
$c_1, c_2$ such that $c_1^r + c_2 \Diam(\Theta)^r \leq1/2^{r}$
and $c_1 \Diam(\Theta) < m:= \min_{1 \leq i\neq j \leq k} \|\theta
_i -
\theta_j\|/4$.
Let $S = \bigcup_{i=1}^{k}B_i$, where $B_i$'s for $i=1,\ldots, k$
are closed Euclidean balls
of radius $c_1\varepsilon$ and centering at $\theta_1,\ldots, \theta_k$,
respectively.
Any $G' \in\Pcal(\Theta)$ admits either (A)
$G'(S^c) \geq c_2 \varepsilon^r$, or (B) $G'(S^c) < c_2 \varepsilon^r$.

\textit{Case} (A).
$G'(S^c) \geq c_2 \varepsilon^r$.
Let $\BB= \{Q \in\Pcal(\Theta)| Q(S^c) > 1/2\}$.
Clearly, $\Dcal(\BB) = 0$. Moreover, for any
$Q \in\BB$ and $Q' \in\supp\Dcal$,
$\Wr^r(Q,Q') \geq(1/2) (c_1\varepsilon)^r$. So for any $\delta<
(1/2)^{1/r}c_1 \varepsilon$,
$\Dcal(\BB_{\delta}) = 0$. Condition (ii) of Definition~\ref{Def-margin}
is satisfied.

It remains to verify condition (i).
If $G'(S) = 0$, then $G'(S^c) = 1$ and $\Dcal'(\BB) = 1$.
So, $\Dcal'(\BB)-\Dcal(\BB) = 1$.
On the other hand, if $G'(S) > 0$ and suppose that
$\law(Q) = \Dcal'$, then $\law(Q(S)) =
\operatorname{Beta}(\alpha' G'(S), \alpha' G'(S^c))$. So,
\begin{eqnarray*}
\Dcal'(\BB) & = & \int_{0}^{1/2}
\frac{\Gamma(\alpha' )}{\Gamma
(\alpha' G'(S))
\Gamma(\alpha' G'(S^c))} x^{\alpha' G'(S) - 1}(1-x)^{\alpha'
G'(S^c)-1} \,\mathrm{d}x
\\
&\geq& \frac{(1/2)^{\alpha'}\Gamma(\alpha' )}{\Gamma(\alpha'
G'(S))\Gamma
(\alpha' G'(S^c))} \int_{0}^{1/2}
x^{\alpha' G'(S)-1} \,\mathrm{d}x
\\
& = & \frac{(1/2)^{\alpha'}\Gamma(\alpha' )}{\Gamma(\alpha'
G'(S))\Gamma
(\alpha' G'(S^c))} \times\frac{(1/2)^{\alpha' G'(S)}}{\alpha' G'(S)}
\\
& = & \frac{(1/2)^{\alpha'+\alpha'G'(S)}\Gamma(\alpha' ) \alpha
'G'(S^c)}{\Gamma(\alpha' G'(S)+1)\Gamma(\alpha' G'(S^c)+1)}
\\
& \geq& \frac{(1/2)^{2\alpha'}\Gamma(\alpha' )\alpha' G'(S^c)}{
\max_{1\leq x \leq\alpha'+1} \Gamma(x)^2} \geq\frac
{(1/2)^{2\alpha'}\Gamma(\alpha' )\alpha' c_2\varepsilon^r}{
\max_{1\leq x \leq\alpha'+1} \Gamma(x)^2}.
\end{eqnarray*}
In the above display, the first inequality is due to $(1-x)^{\gamma}
\geq1$
if $\gamma\leq0$, and $(1-x)^{\gamma} \geq(1/2)^\gamma$ if $\gamma>
0$ for
$x \in[0,1/2]$.
The third equality is due to $x\Gamma(x) = \Gamma(x+1)$ for any $x > 0$.
Condition~(i) is verified.

\textit{Case} (B). $\beta'_0 := G'(S^c) < c_2 \varepsilon^r$.
Let $\beta'_i = G'(B_i)$ for $i=1,\ldots, k$.
Consider the map
$\Phi\dvtx\Pcal(\Theta) \rightarrow\Delta^{k-1}$, defined by
\[
\Phi(Q):= \bigl(Q(B_1)/Q(S), \ldots, Q(B_k)/Q(S) \bigr).
\]

Define $P_1 := \operatorname{Dir}(\alpha\beta_1,\ldots,\alpha\beta_k)$
and $P_2 := \operatorname{Dir}(\alpha' \beta'_1,\ldots,\alpha' \beta'_k)$.
By a standard property of Dirichlet measures, $P_1$ and $P_2$
are push-forward measures of $\Dcal$ and $\Dcal'$, respectively, by
$\Phi$.
(i.e.,
if $\law(Q) = \Dcal$, then $\law(\Phi(Q)) = P_1$.
If $\law(Q) = \Dcal'$ then $\law(\Phi(Q)) = P_2$.)
Define
\[
B_1 := \biggl\{\vecq\in\Delta^{k-1} \bigg| \frac{\mathrm{d} P_2}{\mathrm{d} P_1}(
\vecq) > 1 \biggr\}.
\]
%
(This is exactly the same set defined by equation (4) of \cite{Nguyen-hdpsupp} in
the proof of
Lemma~\ref{Lem-samespt} that we shall encounter in the sequel.)
Now let $\BB= \Phi^{-1}(B_1)$. Then we have $\Dcal'(\BB) - \Dcal
(\BB)
= P_2(B_1) - P_1(B_1)
= V(P_1,P_2)$.

To verify condition (ii) of Definition~\ref{Def-margin}, recall that
\[
\Dcal(\BB_\delta\setminus\BB) = \Dcal \Biggl( \Biggl\{Q = \sum
_{i=1}^{k}q_i \delta_{\theta_i} \bigg| Q
\notin\BB; \Wr \bigl(Q,Q' \bigr) \leq\delta\mbox{ for some }
Q' \in\BB \Biggr\} \Biggr).
\]
For a measure of the form $Q = \sum_{i=1}^{k}q_i \delta_{\theta_i}$,
$\Wr(Q,Q') \leq\delta$ entails $Q(B_i) - Q'(B_i) = q_i - Q'(B_i)
\leq
\delta^r/(c_1\varepsilon)^r$, and $Q'(B_i) - q_i \leq\delta
^r/(m-c_1\varepsilon)^r
< \delta^r/(c_1\varepsilon)^r$,
for any $i=1,\ldots, k$. As well, $Q'(S^c) \leq
\delta^r/(c_1\varepsilon)^r$. This implies that
\[
\bigl|Q(B_i)/Q(S) - Q'(B_i)/Q'(S)\bigr|
= \biggl|q_i - \frac{Q'(B_i)}{1-Q'(S^c)} \biggr| \leq\frac{2\delta
^r/(c_1\varepsilon)^r}{1-\delta^r/(c_1\varepsilon)^r} \leq4
\delta^r/(c_1\varepsilon)^r,
\]
where the last inequality holds as soon as $\delta\leq c_1\varepsilon
/2^{1/r}$.
In short, $\Wr(Q,Q') \leq\delta$ implies that $\|\Phi(Q) - \Phi
(Q')\|
_\infty\leq
4\delta^r/(c_1\varepsilon)^r$. We have
\begin{eqnarray*}
\Dcal(\BB_\delta\setminus\BB) & \leq& \Dcal \bigl( \bigl\{ Q | Q\notin
\BB; \bigl\|\Phi(Q)-\Phi \bigl(Q' \bigr)\bigr\|_\infty\leq4
\delta^r/(c_1\varepsilon)^r \mbox{ for some }
Q' \in\BB \bigr\} \bigr)
\\
& = & P_1 \bigl( \bigl\{\vecq| \vecq\notin B_1; \bigl\|\vecq-
\vecq'\bigr\|_\infty\leq4\delta^r/(c_1
\varepsilon)^r \mbox{ for some } \vecq' \in
B_1 \bigr\} \bigr)
\\
& \leq& C_0 (\delta/\varepsilon)^{\alpha^*r}.
\end{eqnarray*}
The equality in the previous display is due to the definition of $\BB$,
while the last inequality is essentially the proof of Lemma~\ref
{Lem-samespt}(b).
$C_0$ is a positive constant dependent only on $\Dcal$.


It remains to verify
condition (i) in Definition~\ref{Def-margin}. We have
%
%
\begin{eqnarray}\label{Eqn-vbound}
V(P_1,P_2) & = & V(\DP_{\sum_{i=1}^{k}\alpha\beta_i \delta_{\theta_i}},
\DP_{\sum_{i=1}^{k}\alpha' \beta'_i \delta_{\theta_i}})
\nonumber
\\
& \geq& \frac{1}{(2\operatorname{diam}(\Theta))^r} \Wr^r(\DP _{\alpha G},
\DP_{\sum_{i=1}^{k}\alpha' \beta'_i \delta_{\theta_i}})
\\
 & \geq&
\frac{1}{(2\operatorname{diam}(\Theta))^r} \Wr^r \Biggl(G, \sum_{i=1}^{k}
\frac{\beta'_i}{1-\beta'_0} \delta_{\theta_i} \Biggr).\nonumber
\end{eqnarray}

The first inequality in the above display is due to
Theorem~6.15 of \cite{Villani-08}, 
while the second inequality is due to Lemma~\ref{Lem-dp-hierarchy}(a).
Now, we have
\begin{eqnarray*}
\Wr^r \Biggl(G', \sum_{i=1}^{k}
\frac{\beta'_i}{1-\beta'_0} \delta_{\theta_i} \Biggr) & \leq& (c_1
\varepsilon)^r \sum_{i=1}^{k}
\biggl(\beta'_i \wedge\frac
{\beta
'_i}{1-\beta'_0} \biggr) +
\Diam( \Theta)^r \sum_{i=1}^{k}
\biggl| \beta'_i - \frac{\beta
'_i}{1-\beta'_0} \biggr|
\\
& \leq& (c_1\varepsilon)^r + \Diam(\Theta)^r
\sum_{i=1}^{k}\frac
{\beta'_i
\beta'_0}{1-\beta'_0}
\\
& \leq& \varepsilon^r \bigl(c_1^r +
c_2 \Diam(\Theta)^r \bigr) \leq\varepsilon^r/2^r.
\end{eqnarray*}
The last inequalities in the above display
is due to the hypothesis that $\beta'_0 < c_2\varepsilon^r$, and the
choice of $c_1,c_2$.
By triangle inequality,
\[
\Wr \Biggl(G, \sum_{i=1}^{k}
\frac{\beta'_i}{1-\beta'_0} \delta_{\theta_i} \Biggr) \geq\Wr \bigl(G,G'
\bigr) - \Wr \Biggl(G', \sum_{i=1}^{k}
\frac{\beta'_i}{1-\beta'_0} \delta_{\theta_i} \Biggr) \geq \varepsilon- \varepsilon/2
= \varepsilon/2.
\]
Combining with equation \eqref{Eqn-vbound},
we obtain that $\Dcal'(\BB) - \Dcal(\BB) =
V(P_1,P_2) \geq\break \frac{1}{(2\operatorname{diam}(\Theta))^r}
(\varepsilon/2)^r$.
This concludes the proof.
\end{pf}

The following lemma, which establishes strong regularity for a
restricted class of
Dirichlet measures, supplies a key argument in the proof of the previous
theorem. The proof of this lemma is quite technical and
deferred to 
\cite{Nguyen-hdpsupp}.

%
\begin{lemma}
\label{Lem-samespt}
Let $\Dcal= \DP_{\alpha G}$, where $G = \sum_{i=1}^{k} \beta_i
\delta
_{\theta_i}$
for some $k < \infty$, $\alpha, \alpha' > 0$.
Define
\[
\Cclass= \bigl\{\DP_{\alpha'G'} | G' \in\Pcal(\Theta), \supp
G' = \supp G \bigr\}.
\]
\begin{longlist}[(a)]
\item[(a)]
If $\min_{i} \alpha\beta_i \geq1$, then
$\Cclass$ has \emph{strong} $r$-regular boundary with respect to
$\Dcal$.
\item[(b)] If $\max_{i} \alpha\beta_i < 1$,
then
$\Cclass$ has \emph{strong} ${\alpha^*}r$-regular boundary with
respect to $\Dcal$,
where ${\alpha^*}= \min_{i} \alpha\beta_i$.
\end{longlist}
\end{lemma}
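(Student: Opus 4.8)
The plan is to reduce both assertions to a single estimate about a Dirichlet distribution on a simplex and its likelihood-ratio test set. Since $\supp G'=\supp G=\{\theta_1,\dots,\theta_k\}$, both $\Dcal=\DP_{\alpha G}$ and $\DP_{\alpha'G'}$ are carried by the measures supported in $\{\theta_1,\dots,\theta_k\}$, which I identify with the open simplex $\Delta^{k-1}$ by $\Phi:\sum_i q_i\delta_{\theta_i}\mapsto\vecq=(q_1,\dots,q_k)$; under $\Phi$, $\Dcal\mapsto P_1:=\textrm{Dir}(\alpha\beta_1,\dots,\alpha\beta_k)$ and $\DP_{\alpha'G'}\mapsto P_2:=\textrm{Dir}(\alpha'\beta_1',\dots,\alpha'\beta_k')$ with $\beta_i':=G'(\{\theta_i\})$. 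For measures on these $k$ atoms, $\Wr^r(Q,Q')$ is comparable to $\|\vecq-\vecq'\|_1$ up to constants depending only on $\min_{i\neq j}\|\theta_i-\theta_j\|$ and $\Diam(\Theta)$, so $\Wr(Q,Q')\le\delta$ forces $\|\vecq-\vecq'\|_1\lesssim\delta^r$. If $G'=G$ take $\BB=\emptyset$; otherwise $a_i:=\alpha'\beta_i'-\alpha\beta_i$ is not identically $0$. I take $\BB:=\Phi^{-1}(B_1)$, where
\begin{equation}\label{Eqn-bd-dir}
B_1:=\{\vecq\in\Delta^{k-1}:\ dP_2/dP_1(\vecq)>1\}=\{\vecq\in\Delta^{k-1}:\ \psi(\vecq)>c'\},
\end{equation}
with $\psi(\vecq):=\sum_i a_i\log q_i$ and $c':=-\log C$, using $dP_2/dP_1(\vecq)=C\prod_i q_i^{a_i}$ and $C$ the ratio of Dirichlet normalizing constants. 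Because $B_1$ is the likelihood-ratio region, $\Dcal'(\BB)-\Dcal(\BB)=P_2(B_1)-P_1(B_1)=V(P_1,P_2)=V(\Dcal,\Dcal')$, and Theorem~6.15 of~\cite{Villani-08} together with Lemma~\ref{Lem-dp-hierarchy}(a) gives $V(\Dcal,\Dcal')\ge(2\Diam(\Theta))^{-r}\Wr^r(\Dcal,\Dcal')\ge(2\Diam(\Theta))^{-r}\Wr^r(G,G')$; hence condition (i) of Definition~\ref{Def-margin} holds with $c_0=(2\Diam(\Theta))^{-r}$, uniformly over $\Cclass$.

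For the (strong) regularity condition we must bound $\Dcal(\BB_\delta\setminus\BB)$ by $C_0\delta^{\alphastar r}$ in case (b) (by $C_0\delta^r$ in case (a)), for $\delta\le c_1$, with $C_0,c_1$ depending on $\Dcal$ only. By the metric comparison this amounts to bounding $P_1(T_\eta)$, where $\eta\asymp\delta^r$ and $T_\eta:=\{\vecq\in\Delta^{k-1}:\vecq\notin B_1,\ \|\vecq-\vecq'\|_1\le\eta\text{ for some }\vecq'\in B_1\}$, by $\eta^{\alphastar}$ (resp.\ by $\eta$). I would split off the neighborhood $N_\eta:=\{\vecq:\min_i q_i\le\eta\}$ of $\partial\Delta^{k-1}$: there $P_1(N_\eta)\le\sum_i P_1(Q_i\le\eta)\lesssim\sum_i\eta^{\alpha\beta_i}\le k\,\eta^{\alphastar}$ by the one-dimensional Beta tail — this is precisely where the exponent $\alphastar$ and the dichotomy between (a) and (b) enter, and the constants depend only on $\alpha,\vecbeta$. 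On $T_\eta\setminus N_\eta$, both $\vecq$ and its witness $\vecq'$ stay away from $\partial\Delta^{k-1}$, so an $\eta$-perturbation moves $\psi$ only a controlled amount and $\vecq$ lies in a thin band $\{c'-\omega<\psi\le c'\}$ about the algebraic hypersurface $\{\psi=c'\}$, whose $P_1$-mass must be estimated. Since $\min_i q_i$ ranges over many scales on $T_\eta\setminus N_\eta$, I would carry this out either by a dyadic decomposition in $\min_i q_i$ (balancing the growth of the Dirichlet density against the shrinking of the band), or --- more cleanly --- by slicing along a coordinate $i_0$ with $a_{i_0}\neq 0$ and using the aggregation property of the Dirichlet ($q_{i_0}\sim\textrm{Beta}$, independent of the renormalized remaining coordinates): along each slice $\{\psi>c'\}$ is an interval in $q_{i_0}$ (by unimodality of $t\mapsto\psi$ in a coordinate direction the level $c'$ is crossed at most twice), so the estimate reduces to the Beta computation plus an induction on $k$, the near-boundary part at each stage being absorbed by another Beta tail. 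Case (a), with $\min_i\alpha\beta_i\ge1$, is the easy one: the Dirichlet density is then bounded on all of $\Delta^{k-1}$, so $T_\eta$ is just a thin tube around $\{\psi=c'\}$ and $P_1(T_\eta)\lesssim\eta$.

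The step I expect to be the main obstacle is the interior band estimate performed \emph{uniformly over $\Cclass$}: the position of $\{\psi=c'\}$, and its behavior near $\partial\Delta^{k-1}$ where $\psi$ becomes singular, depend on $\vecbeta'$, whereas $C_0$ and $c_1$ must not; moreover $\nabla\psi$ can degenerate at an interior critical point of $\psi$ and the hypersurface $\{\psi=c'\}$ can come back near $\partial\Delta^{k-1}$. The Beta-tail argument disposes of the genuinely singular region uniformly --- at the cost of the exponent $\alphastar$ --- and I expect the slicing/induction to carry the interior part through with every constant traceable to a one-dimensional Beta integral. Throughout, Lemma~\ref{Lem-dp-hierarchy}(a) and the $V$--$\Wr$ comparison are used as black boxes.
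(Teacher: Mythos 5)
Your setup matches the paper's exactly: the identification of $\Dcal,\Dcal'$ with Dirichlet distributions on $\Delta^{k-1}$, the test set $\BB=\Phi^{-1}(B_1)$ as the likelihood-ratio region, and condition (i) via Theorem~6.15 of Villani together with Lemma~\ref{Lem-dp-hierarchy}(a). Your $N_\eta$-splitting with the resulting $\eta^{\alphastar}$ Beta-tail loss also parallels how the paper's part (b) isolates the coordinate $j^*$ with $q^*_{j^*}\ge 1/k$ and pays through the factor $A(q_i^*,2\delta)$. The gap, however, is genuine and sits exactly where you flag it. For case (a), ``density bounded so $P_1(T_\eta)\lesssim\eta$'' presupposes $\mathrm{Leb}(T_\eta)\lesssim\eta$ uniformly over $\Cclass$, i.e.\ a uniform bound on the $(k-2)$-dimensional size (equivalently the $\delta$-covering number) of the hypersurface $\{\psi=c'\}\cap\Delta^{k-1}$. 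Since $\nabla\psi=(a_i/q_i)_i$ blows up at the faces and the hypersurface may accumulate at faces where $a_i=0$, with the $a_i$ depending on $\alpha',\vecbeta'$, such a bound is not free. For case (b), the slicing route runs into the same obstruction from the other side: after removing $N_\eta$ one only has $\min_i\tilde{s}_i\gtrsim\eta/(1-q_{i_0})$, while the witness $\vecq'\in\partial B_1$ of a point $\vecq\in T_\eta$ sits on a slice with $\|\tilde{s}-\tilde{s}'\|_1\lesssim\eta/(1-q_{i_0})$; the induced shift $|\rho(\tilde{s})-\rho(\tilde{s}')|\le\sum_{i\ne i_0}|a_i|\,|\log\tilde{s}_i-\log\tilde{s}_i'|$ is then only $O(1)$, not $O(\eta)$, so $T_\eta^{\tilde{s}}$ need not lie in an $O(\eta)$-neighborhood of the at-most-two within-slice crossings and the one-dimensional Beta computation does not take over.

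What is missing, and what the paper supplies, is a uniform covering-number bound for the level set $\Qcal=\{\vecq:\sum_i\Delta_i\log q_i=\text{const}\}$ obtained from the implicit function theorem with a tailored coordinate choice. The paper partitions $\Qcal$ into at most $k(k-1)$ pieces according to which indices attain $\max_i\Delta_i/q_i$ and $\min_i\Delta_i/q_i$; on the piece where $\Delta_1/q_1$ is largest and $\Delta_2/q_2$ is smallest it expresses $q_1,q_2$ as functions of $q_3,\dots,q_k$, and the IFT gives
$\partial q_1/\partial q_i=-(\Delta_i/q_i-\Delta_2/q_2)/(\Delta_1/q_1-\Delta_2/q_2)$ and similarly for $q_2$, so that $|\partial q_1/\partial q_i|\le1$ and $|\partial q_2/\partial q_i|\le1$ on the whole piece, \emph{uniformly in $\alpha',G'$}. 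Each piece is therefore a Lipschitz graph with absolute constants, hence covered by $\lesssim(1/\delta)^{k-2}$ $\ell_\infty$-balls of radius $\delta$, so $(\bd\BB)_\epsilon\cap\supp\Dcal$ is covered by $\lesssim k^2(1/\delta)^{k-2}$ doubled balls with $\delta\asymp\epsilon^r$. The per-ball mass is then bounded directly ($\lesssim\delta^{k-1}$ in case (a); in case (b) a product $\prod_{i\ne j^*}A(q_i^*,2\delta)$ in which the ``graph'' coordinate $i^*$ contributes the single factor $\lesssim\delta^{\alphastar}$ and the remaining sums telescope to constants), yielding the stated $C_0\delta^{\alphastar r}$ with constants depending only on $\Dcal$. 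Some device giving this uniform control on the geometry of $\{\psi=c'\}$ is indispensable; neither the dyadic nor the slicing plan, as sketched, supplies it.
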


\subsection{The case of infinite and geometrically sparse support}

In this subsection, we study a class of base measures $G$ that
have infinite support points, but that remain amenable to our analysis of
regular boundaries. In particular,
we consider the class of sparse measures on $\Theta$
(either ordinary sparse or supersparse) given by Definition~\ref{Def-sparse}.

%
\begin{theorem}
\label{Thm-sparse}
Assume that $\Dcal=\DP_{\alpha G}$ for some $\alpha\in(0,1]$.
$\supp G$ is a $(c_1,c_2,K)$-sparse subset of a bounded space $\Theta$
and that $G$ is a sparse measure equipped with gauge function $\gauge$.
Let $\alpha_1 \geq\alpha_0 > 0$.
Then,
for any $\Dcal' \in\Cclass$, where
\[
\Cclass= \bigl\{\Dcal' = \DP_{\alpha'G'} | G'
\in\Pcal(\Theta), \alpha' \in[\alpha_0,
\alpha_1] \bigr\}
\]
there exists a measurable set $\BB\subset\Pcal(\Theta)$ for which
\begin{longlist}[(ii)]
\item[(i)] $\Dcal'(\BB) - \Dcal(\BB) \gtrsim\Wr^r(G,G')$,
\item[(ii)] for any $\delta\lesssim\Wr(G,G')$,
\[
\Dcal(\BB_\delta\setminus\BB) \lesssim24^{K(c_0\Wr(G,G'))} \times \biggl(
\frac{\delta}{\Wr(G,G')} \biggr)^{\alpha r\gauge
(c_0\Wr(G,G'))}.
\]
\end{longlist}
Here, $c_0$ and the multiplying
constants in $\lesssim$ and $\gtrsim$ depend only on $\Dcal$.
\end{theorem}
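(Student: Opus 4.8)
The plan is to follow the two-case architecture of the proof of Theorem~\ref{Lem-regularity}, replacing the fixed finite support $\{\theta_1,\dots,\theta_k\}$ used there by a finite \emph{covering} of $\supp G$ at a scale comparable to $\Wr(G,G')$, whose cardinality and whose minimal $G$-mass per ball are now governed by $K$ and $\gauge$ rather than being fixed. Write $\epsilon:=\Wr(G,G')$. Applying Definition~\ref{Def-sparse} with parameter a suitably small constant multiple of $\epsilon$, I obtain a covering scale $\epsilon'\in(c_1\delta,\delta)$, hence $\epsilon'\asymp\epsilon$ with ratio depending only on $c_1$, together with closed balls $B_1,\dots,B_k$ of radius $\epsilon'$ (with $k=K(\epsilon')$) that cover $\supp G$, are pairwise separated by at least $c_2\epsilon'$, and (by definition of the gauge) satisfy $G(B_i)\ge\gauge(\epsilon')$ for every $i$. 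After a harmless enlargement of the $B_i$ to concentric balls $\hat B_i$ that remain pairwise separated but now contain $\supp G$ in their interiors, set $S:=\bigcup_{i=1}^k\hat B_i$; since $\supp G\subseteq S$ we have $G(S^c)=0$, so $Q(S^c)=0$ for $\Dcal$-a.e.\ $Q$ and $(Q(\hat B_1),\dots,Q(\hat B_k))\sim\textrm{Dir}(\alpha G(\hat B_1),\dots,\alpha G(\hat B_k))$ under $\Dcal$. Because $K$ is non-increasing, $\gauge$ non-decreasing, and $\delta/\epsilon<1$, passing from $\epsilon'$ to $c_0\epsilon$ for a fixed $c_0\le\epsilon'/\epsilon$ only weakens both bounds, which is why the statement is phrased with $c_0\Wr(G,G')$.

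\textbf{Escape-mass case.} Suppose $G'(S^c)\ge c_*\epsilon^r$ for a fixed small $c_*$. Take $\BB:=\{Q\in\Pcal(\Theta):Q(S^c)>1/2\}$. Then $\Dcal(\BB)=0$, and since a measure supported in $\supp G$ must pay $\Wr$-cost at least a fixed multiple of $\epsilon'$ to place more than half its mass in $S^c$, also $\Dcal(\BB_\delta)=0$ for every $\delta$ below that multiple of $\epsilon$; so (ii) holds trivially. For (i): if $G'(S)=0$ then $\Dcal'(\BB)=1$; otherwise $Q(S^c)\sim\textrm{Beta}(\alpha'G'(S^c),\alpha'G'(S))$, and the Beta-integral estimate of Case (A) in the proof of Theorem~\ref{Lem-regularity} gives $\Dcal'(\BB)\gtrsim\alpha'G'(S^c)\gtrsim\epsilon^r=\Wr^r(G,G')$, uniformly over $\alpha'\in[\alpha_0,\alpha_1]$.

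\textbf{Concentrated case.} Suppose $G'(S^c)<c_*\epsilon^r$. Consider $\Phi(Q):=(Q(\hat B_1)/Q(S),\dots,Q(\hat B_k)/Q(S))$, which (by the aggregation property of the Dirichlet together with $G(S^c)=0$) pushes $\Dcal$ and $\Dcal'$ forward to $P_1:=\textrm{Dir}(\alpha G(\hat B_1),\dots,\alpha G(\hat B_k))$ and $P_2:=\textrm{Dir}(\alpha'G'(\hat B_1),\dots,\alpha'G'(\hat B_k))$ on $\Delta^{k-1}$. Set $\BB:=\Phi^{-1}\{dP_2/dP_1>1\}$, so $\Dcal'(\BB)-\Dcal(\BB)=V(P_1,P_2)$ (if some $G'(\hat B_i)=0$, replace this by the $\Phi$-preimage of a thin neighborhood of the face $\{q_i=0\}$, a routine variant; note also $Q\mapsto Q(\hat B_i)$ is $\Wr$-Lipschitz near $\supp\Dcal$ thanks to the ball separation). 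For (i): by Theorem~6.15 of~\cite{Villani-08} in the form~\eqref{Eqn-bound-V}, $V(P_1,P_2)\ge\Wr^r(P_1,P_2)/(2\Diam(\Theta))^r$, and by Lemma~\ref{Lem-dp-hierarchy}(a) $\Wr(P_1,P_2)\ge\Wr(G_{\mathrm{disc}},\bar G')$, where $G_{\mathrm{disc}}:=\sum_i G(\hat B_i)\delta_{c_i}$ and $\bar G':=\sum_i(G'(\hat B_i)/G'(S))\delta_{c_i}$ are the discretizations onto the ball centers $c_i$ (these are the mean measures of $P_1,P_2$). Since these discretizations displace mass by at most a fixed multiple of $\epsilon$ — using ball radius $\asymp\epsilon'$ and $G'(S^c)<c_*\epsilon^r$ — a triangle inequality yields $\Wr(G_{\mathrm{disc}},\bar G')\ge\epsilon/2$ once $c_*$ and $c_0$ are small, hence $V(P_1,P_2)\gtrsim\epsilon^r=\Wr^r(G,G')$.

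\textbf{The boundary bound (ii), and the main obstacle.} As in Case (B) of Theorem~\ref{Lem-regularity}, $\Wr(Q,Q')\le\delta$ with $Q\in\supp\Dcal$ forces $\|\Phi(Q)-\Phi(Q')\|_\infty\lesssim(\delta/\epsilon')^r$, so $\Dcal(\BB_\delta\setminus\BB)$ is at most the $P_1$-mass of a tube of $\ell_\infty$-radius $\asymp(\delta/\epsilon')^r$ around the hypersurface $\{dP_2/dP_1=1\}$ in $\Delta^{k-1}$; this is where the argument of Lemma~\ref{Lem-samespt}(b) (via Lemma~\ref{Lem-boundary}) enters. One partitions the hypersurface into $O(k^2)$ pieces, each a graph over $k-2$ coordinates with all partial derivatives of modulus $\le1$, covers by an $\ell_\infty$-grid of mesh $\asymp(\delta/\epsilon')^r$, and estimates the $P_1$-mass of each cell by the product bound $\prod_{i\ne j^*}A(q_i^*,\cdot)$ of that lemma, with $j^*$ a coordinate of size $\ge1/k$ and with the relevant exponent now governed by $\alpha^\dagger:=\min_i\alpha G(\hat B_i)\ge\alpha\gauge(\epsilon')$. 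Summing the products exactly as in Lemma~\ref{Lem-samespt}(b) produces a bound of the shape
\[
\Dcal(\BB_\delta\setminus\BB)\ \lesssim\ \frac{\Gamma(\alpha)}{\prod_i\Gamma(\alpha G(\hat B_i))}\;k(k-1)\;\frac{\bigl(6(\delta/\epsilon')^r\bigr)^{\alpha^\dagger}}{\alpha^\dagger}\;\Bigl(\frac{6}{\alpha^\dagger}\Bigr)^{k-2}.
\]
The heart of the proof is the ensuing bookkeeping. Unlike in Lemma~\ref{Lem-samespt}, where $k$ and the floor $\alpha^*$ are fixed (so all multiplicative constants are swallowed into a $\Dcal$-dependent $C_0$), here the dimension $k=K(\epsilon')$ grows and $\alpha^\dagger\ge\alpha\gauge(\epsilon')$ shrinks as $\epsilon\downarrow0$, so every constant from that lemma — the $k$-polynomial prefactors, the Gamma ratio (handled using $\Gamma(x)\ge1$ on $(0,1]$), and the $(6/\alpha^\dagger)^{k-2}$ factor — must be made explicit in $k$ and $\alpha^\dagger$ and then collapsed into a single exponential-in-$K$ factor of the stated form $24^{K(c_0\epsilon)}$, while the surviving $(\delta/\epsilon')^{r\alpha^\dagger}$ turns into $(\delta/\Wr(G,G'))^{\alpha r\gauge(c_0\epsilon)}$ after the monotonicity reductions on $K$ and $\gauge$ noted above. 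Everything else is a faithful transcription of the finite-support proof; this constant control, together with the enlargement that keeps $\supp G$ safely interior to the covering, is the only genuinely new ingredient.
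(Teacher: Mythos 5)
Your proposal matches the paper's proof essentially step for step: the same two-case split on $G'(S^c)$ versus a threshold proportional to $\Wr^r(G,G')$, the same test sets ($\{Q(S^c)>1/2\}$ and $\Phi^{-1}\{dP_2/dP_1>1\}$), the same chain of inequalities (Theorem~6.15 of~\cite{Villani-08} plus Lemma~\ref{Lem-dp-hierarchy}(a), then the triangle inequality), and the same boundary-tube estimate carried over from Lemma~\ref{Lem-samespt}(b) with the dimension $K(\epsilon')$ and floor $\alpha\gauge(\epsilon')$ now variable, collapsed via the monotonicity of $K$ and $\gauge$ into the stated $24^{K(c_0\Wr(G,G'))}(\delta/\Wr(G,G'))^{\alpha r \gauge(c_0\Wr(G,G'))}$ bound. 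The two minor inaccuracies — the map $Q\mapsto Q(\hat B_i)$ is Hölder of order $r$ in $\Wr$ near $\supp\Dcal$, not Lipschitz, and the enlargement's purpose is to place $S^c$ at distance $\gtrsim\epsilon$ from $\supp G$ rather than merely to make the covering interior — are cosmetic and do not affect the argument; your explicit handling of the edge case $G'(\hat B_i)=0$ is a small improvement over the paper's exposition.
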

The proof of this result is similar to Theorem~\ref{Lem-regularity}
and deferred to \cite{Nguyen-hdpsupp}.

\section{Upper bounds for Wasserstein distances of base measures}
\label{Sec-contraction}
The main purpose of this section is to obtain an upper bound of
distance of
Dirichlet base measures $\Wr(G,G')$ in terms of the variational
distance of the
marginal densities of observed data $V(
{p_{Y_{[n]}|G}},{p_{Y_{[n]}|G'}})$. In particular, we will
establish an inequality of the form: for a fixed $G\in\Pcal(\Theta)$ and
\emph{any} $G' \in\Pcal(\Theta)$,
%
%
\begin{equation}
\label{Eqn-W-UB} \Wr^r \bigl(G,G' \bigr) \lesssim V(\PG,
P_{Y_{[n]}|G'}) + A_n \bigl(G,G' \bigr),
\end{equation}
where $A_n(G,G')$ is a quantity that tends to 0 as $n \rightarrow
\infty$.
The rate at which $A_n(G,G')$ tends to 0 depends on the sparse
structure of
$G$, and the smoothness of the kernel density $f(x|\theta)$.
The full details are given in the statement of Theorem~\ref{Thm-contraction}.
It is worth contrasting this to the relatively easier
inequalities in the opposite direction,
given by Lemma~\ref{Lem-W-ub}: $V(
{p_{Y_{[n]}|G}},{p_{Y_{[n]}|G'}}) \leq h(
{p_{Y_{[n]}|G}},{p_{Y_{[n]}|G'}})
\lesssim n\W_{2r}^{2r}(G,G')$ holds generally for any pair of $G,G'$.

The proof of inequality \eqref{Eqn-W-UB} hinges on the existence of
a suitable set $\BB_n \subset\Pcal(\Theta)$ measurable with respect to
(the sigma algebra induced by) the observed variables $Y_{[n]}$,
which can then be used to distinguish $G'$ from $G$, in the sense that
\[
\Wr^r \bigl(G,G' \bigr) \lesssim P_{Y_{[n]}|G'}(
\BB_n) - \PG(\BB_n) + A_n
\bigl(G,G' \bigr).
\]

In the previous section,
we have already shown the existence of subset $\BB\subset\Pcal
(\Theta
)$ for which
\[
\Wr^r \bigl(G,G' \bigr) \lesssim\Dcal'(
\BB) - \Dcal(\BB).
\]
To link up this result to the desired bound \eqref{Eqn-W-UB}, the missing
piece of the puzzle is the existence of a point estimate for the mixing measures
on the basis of observed variables $Y_{[n]}$.
In the following, we shall establish the existence of such point estimators,
which admit finite-sample probability bounds that
may also be of independent interest.

\subsection{Finite-sample probability bounds for deconvolution problem}

Let $\Qcal$ be a subset of $\Pcal(\Theta)$, and $\Fcal= \{Q*f | Q
\in
\Qcal\}$.
Let $\Qcal_k \subset\Pcal(\Theta)$ be subset of measures with at
most $k$
support points. $\Fcal_k = \{Q*f | Q \in\Qcal_k\}$.
Given an i.i.d. $n$-vector $\YYn= (Y_1,\ldots, Y_n)$ according
to the convolution mixture density $Q_0*f$ for some $Q_0 \in\Qcal$.
Let $\eta_n$ be a sequence
of positive numbers converging to zero. Following \cite{Wong-Shen-95},
we consider
an $\eta_n$-MLE (maximum likelihood estimator) $\hatf_n \in\Fcal$
such that
\[
\frac{1}{n}\sum_{i=1}^{n} \log
\hatf_n(Y_i) \geq\sup_{g \in\Fcal}
\frac{1}{n} \sum_{i=1}^{n}\log
g(Y_i) - \eta_n.
\]

By our construction, there exists $\hatQ_n \in\Qcal$ such that
$\hatf_n = \hatQ_n*f$.

%
\begin{lemma}
\label{Lem-wong-shen}
Suppose that assumption \textup{(A1)} holds for some $r\geq1, C_1 > 0$. Let $\eta
_n$ satisfy
$\eta_n \leq c_1\varepsilon_n^2$, $\varepsilon_n \rightarrow0$ at a
rate to
be specified.
Then the $\eta_n$-MLE satisfies
the following bound under $Q_0*f$-measure, for any $Q_0 \in\Qcal$:
%
%
\begin{eqnarray}
\Prob \bigl(h(\hatf_n,Q_0*f) \geq\varepsilon_n
\bigr) & \leq& 5\exp \bigl(-c_2n\varepsilon_n^2
\bigr),
\\
\label{Eqn-deconvolution} \Prob \bigl(W_2(\hatQ_n,Q_0)
\geq\delta_n \bigr) & \leq& 5\exp \bigl(-c_2n
\varepsilon_n^2 \bigr),
\end{eqnarray}
where $c_1,c_2$ are some universal positive constants. $\varepsilon_n$ and
$\delta_n$
are given as follows:
\begin{longlist}[(a)]
\item[(a)] $\varepsilon_n = C_2(\log n/n)^{r/2d}$, if $d > 2r$;
$\varepsilon_n = C_2(\log n/n)^{r/(d+2r)}$ if $d<2r$, and
$\varepsilon_n =  (\log n)^{3/4}/n^{1/4}$ if $d=2r$.
\item[(b)] $\varepsilon_n = C_2 n^{-1/2}\log n$, if $\Qcal= \Qcal_k$
and $\Fcal= \Fcal_k$ for some $k < \infty$.
\item[(c)] If $f$
is ordinary smooth with parameter $\beta> 0$,
then $\delta_n = C_3 \varepsilon_n^{{1}/{(2+\beta d')}}$ for any
$d' > d$.
\item[(d)] If $f$
is supersmooth with parameter $\beta> 0$, then
$\delta_n = C_3 [-\log\varepsilon_n]^{-1/\beta}$.
\end{longlist}
Here, $C_2,C_3$ are different constants in each case. $C_2$
depends only on $d,r,\Theta$ and $C_1$, while $C_3$ depends only on
$d,\beta, \Theta$ and $C_2$.
\end{lemma}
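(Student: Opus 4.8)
The plan is to treat the two displayed bounds separately: the Hellinger bound for the $\eta_n$-MLE, which is a by-now standard consequence of the Wong--Shen machinery once the bracketing entropy of the mixture class is computed, and the \emph{deterministic} ``deconvolution'' inequality that converts a Hellinger bound on $\hatf_n=\hatQ_n*f$ into a $\Wtwo$ bound on $\hatQ_n$ — this is the real content, and the main obstacle.

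\emph{Entropy of the model class.} By (A1) and Cauchy--Schwarz applied inside the mixture (this is exactly the $n=1$ instance of Lemma~\ref{Lem-W-ub}), $h(Q*f,Q'*f)\le \sqrt{C_1}\,W_{2r}^{r}(Q,Q')$ for all $Q,Q'\in\Pcal(\Theta)$. Hence a cover of $\Pcal(\Theta)$ in $W_{2r}$ at radius $u^{1/r}$ induces a cover of $\Fcal$ in $h$ at radius of order $u$; combined with the standard estimate $\log N(\epsilon,\Pcal(\Theta),W_{2r})\lesssim \epsilon^{-d}$ for bounded $\Theta\subset\real^d$, this yields the bracketing entropy bound $\log N_{[\,]}(u,\Fcal,h)\lesssim u^{-d/r}$, and $\log N_{[\,]}(u,\Fcal_k,h)\lesssim \log(1/u)$ when $\Qcal=\Qcal_k$ (a finite-dimensional parametrization). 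Brackets, not merely nets, are obtained because $Q*f$ depends continuously and monotonically on $Q$ and because $\chi$ is uniformly bounded by (A2).

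\emph{From entropy to the Hellinger rate.} Feeding these bounds into the sieve-MLE theorem of \cite{Wong-Shen-95}: with $\eta_n\le c_1\epsilon_n^2$ and $\epsilon_n$ chosen so that the entropy integral $\int_{\epsilon_n^2}^{\epsilon_n}\sqrt{\log N_{[\,]}(u,\Fcal,h)}\,du\lesssim \sqrt n\,\epsilon_n^2$ holds, one gets $\Prob(h(\hatf_n,Q_0*f)\ge \epsilon_n)\le 5\exp(-c_2 n\epsilon_n^2)$, with the characteristic constant $5$ coming directly from that reference. Solving the integral with $\log N_{[\,]}\asymp u^{-d/r}$ splits into three regimes according to whether $d/r<2$, $=2$, or $>2$, producing the three sub-cases of (a); the parametric entropy $\log(1/u)$ produces the rate $n^{-1/2}\log n$ of (b). The constants depend only on $d,r,\Theta,C_1$, as asserted.

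\emph{The deconvolution step (the crux).} It remains to prove the deterministic implication ``$h(Q*f,Q_0*f)\le\epsilon \Rightarrow \Wtwo(Q,Q_0)\le C_3\,g(\epsilon)$'' for $Q,Q_0\in\Pcal(\Theta)$; the second display then follows by the inclusion of events $\{\Wtwo(\hatQ_n,Q_0)\ge\delta_n\}\subseteq\{h(\hatf_n,Q_0*f)\ge\epsilon_n\}$ with $\delta_n:=C_3 g(\epsilon_n)$. Following the idea of \cite{Fan-91} and its $\Wtwo$-version in \cite{Nguyen-11}: since $\tilde f$ never vanishes, $\widetilde{Q*f}=\tilde Q\,\tilde f$ gives $|\tilde Q(\omega)-\tilde{Q_0}(\omega)|\lesssim h(Q*f,Q_0*f)/|\tilde f(\omega)|$ for every $\omega$. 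Convolving $Q$ and $Q_0$ with a mollifier $\phi_\delta$ at scale $\delta$, one has $\Wtwo(Q,Q*\phi_\delta)\lesssim\delta$, while $Q*\phi_\delta$ and $Q_0*\phi_\delta$ are densities concentrated (up to negligible tails) on a fixed bounded enlargement of $\Theta$, so $\Wtwo^2(Q*\phi_\delta,Q_0*\phi_\delta)\lesssim V(Q*\phi_\delta,Q_0*\phi_\delta)$, and by Plancherel $V^2\lesssim \int|\tilde Q-\tilde{Q_0}|^2|\hat\phi_\delta|^2\lesssim h^2\int_{[-1/\delta,1/\delta]^d}|\tilde f(\omega)|^{-2}\,d\omega$ plus an exponentially small truncation error. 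The smoothness hypothesis on $f$ then bounds the last integral by $\delta^{-2d'\beta}$ for any $d'>d$ (ordinary smooth) or by $\exp(C\delta^{-\beta})$ (supersmooth). Hence $\Wtwo(Q,Q_0)\lesssim \delta + h^{1/2}\delta^{-d'\beta}$ or $\lesssim \delta + h^{1/2}\exp(C\delta^{-\beta}/2)$, and optimizing over $\delta$ gives $g(\epsilon)\asymp \epsilon^{1/(2+\beta d')}$ and $g(\epsilon)\asymp (-\log\epsilon)^{-1/\beta}$, which are precisely (c) and (d). The delicate point — and the part I expect to be the real obstacle — is controlling the high-frequency/tail contribution of the mollified measures without losing more than a logarithmic factor; this is exactly where the freedom to take $d'>d$ and the constant $2d$ inside $\exp(2d\delta^{-\beta})$ in the smoothness definitions are used.
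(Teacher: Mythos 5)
Your proposal follows the same route as the paper: parts (a)/(b) come from Wong--Shen's Theorem~2 combined with the mixture-entropy bound induced by (A1) and the $h^2(Q*f,Q'*f)\le C_1 W_{2r}^{2r}(Q,Q')$ inequality of Lemma~\ref{Lem-W-ub}, and parts (c)/(d) come from the Fourier deconvolution inequality, which the paper simply cites as Theorem~2 of~\cite{Nguyen-11} but whose mollifier/Plancherel/optimization argument you correctly sketch inline. One arithmetic slip worth fixing: after $W_2^2(Q*\phi_\delta,Q_0*\phi_\delta)\lesssim V\lesssim h\,\delta^{-d'\beta}$, taking square roots yields $h^{1/2}\delta^{-d'\beta/2}$, not $h^{1/2}\delta^{-d'\beta}$; it is the balance $\delta\asymp h^{1/2}\delta^{-d'\beta/2}$ that gives the stated $g(\epsilon)\asymp\epsilon^{1/(2+\beta d')}$.
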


\begin{pf}
Recall Theorem~2 of \cite{Wong-Shen-95},
which is restated as follows:
Suppose that $\varepsilon= \varepsilon_n$ satisfies the following inequality:
%
%
\begin{equation}
\label{Eqn-entropy} \int_{\varepsilon^2/2^8}^{\sqrt{2}\varepsilon} \bigl[\log
N(u/c_3, \Fcal, h) \bigr]^{1/2} \,\mathrm{d}u \leq c_4
n^{1/2}\varepsilon^2,
\end{equation}
where $c_3$ and $c_4$ are certain universal constants (cf. Theorem~1 of
\cite{Wong-Shen-95}). Then, for some universal constants $c_1, c_2 > 0$,
if $\eta_n \leq c_1 \varepsilon_n^2$, the following probability bound
holds under $Q_0*f$-measure, for any $Q_0 \in\Qcal$,
\[
\Prob \bigl( h(\hatf_n, Q_0*f) \geq\varepsilon_n
\bigr) \leq5\exp \bigl(-c_2 n\varepsilon_n^2
\bigr).
\]

It remains to verify the entropy condition \eqref{Eqn-entropy} given
the rates specified in the statement of the present lemma. We
shall make use of the following entropy bounds (cf. Lemma~4 of \cite
{Nguyen-11}):
%
%
\begin{eqnarray}
\label{entropy-b1} \log N(2\delta, \Qcal, W_{r}) & \leq& N\bigl(\delta,
\Theta, \|\cdot\|\bigr) \log \bigl(e+e\Diam(\Theta)^{r}/
\delta^{r} \bigr),
\\
\label{entropy-b2} \log(2\delta, \Qcal_k, W_{r}) & \leq& k
\bigl(\log N\bigl(\delta, \Theta, \| \cdot\|\bigr) + \log \bigl(e + e\operatorname{diam}(
\Theta)^{r}/\delta^{r} \bigr) \bigr).
\end{eqnarray}

By assumption (A2) and Lemma~\ref{Lem-W-ub},
we have $h^2(Q*f, Q'*f) \leq C_1W_{2r}^{2r}(Q,Q')$.
This implies that
\[
N(u/c_3,\Fcal, h) \leq N \bigl( \bigl(u^2/c_3^2
C_1 \bigr)^{1/2r}, \Qcal, W_{2r} \bigr).
\]
Since $\Theta\subset\real^d$, $N(\delta, \Theta, \|\cdot\|) \leq
(\operatorname{diam}(\Theta)/\delta)^d$. So, by \eqref{entropy-b1},
\begin{eqnarray*}
&&\int_{\varepsilon^2/2^8}^{\sqrt{2}\varepsilon} \bigl[\log N \bigl(
\bigl(u^2/c_3^2C_1
\bigr)^{1/2r}, \Qcal, W_{2r} \bigr) \bigr]^{1/2} \,\mathrm{d}u
\\
&&\qquad\leq\int_{\varepsilon^2/2^8}^{\sqrt{2}\varepsilon} \biggl[N \biggl(
\frac{u^{1/r}}{2c_3^{1/r}C_1^{1/2r}}, \Theta, \|\cdot\| \biggr) \log \bigl(e+e\Diam(
\Theta)^{2r}2^{2r}c_3^2C_1/u^2
\bigr) \biggr]^{1/2}\,\mathrm{d}u
\\
&&\qquad\leq\int_{\varepsilon^2/2^8}^{\sqrt{2}\varepsilon} \bigl(2\operatorname{diam}(
\Theta) \bigr)^{d/2}c_3^{d/2r}C_1^{d/4r}u^{-d/2r}
\bigl[\log \bigl(e+e\Diam(\Theta)^{2r}2^{2r}c_3^2C_1/u^2
\bigr) \bigr]^{1/2}\,\mathrm{d}u.
\end{eqnarray*}

For equation \eqref{Eqn-entropy} to hold, it suffices to have the right-hand
side of the inequality in the above display bounded by $c_4
n^{1/2}\varepsilon^2$.
Indeed, this is straightforward to check for the rates given
in part~(a) of the lemma.

Part (b) of the lemma is proved in the same way, by invoking a tighter bound
on the covering number via equation \eqref{entropy-b2}.
Parts (c) and (d) are immediate consequences of part (a) and (b) by
invoking Theorem~2 of \cite{Nguyen-11}.
\end{pf}


\subsection{Key upper bound for the Wasserstein distance of base measures}

We are ready to prove the key theorem of this section.
%

\begin{theorem}
\label{Thm-contraction}
Suppose that $\Theta$ is a bounded subset of $\real^d$, \textup{(A1)} holds for
some $C_1 > 0$ and some $r \in[1,2]$.
Let $\delta_n$ and $\varepsilon_n$ be vanishing sequences
for which equation \eqref{Eqn-deconvolution} holds.
Fix $G\in\Pcal(\Theta)$ and $\alpha\in(0,1]$, while $G'$ varies in
$\Pcal(\Theta)$.
Let ${\alpha^*}= \alpha\inf_{\theta\in\supp G}G(\{\theta\})$.
Then there are
positive constants
$c_0,c_1,C_0$ depending only on $G$, and $c_2>0$ a universal constant, such
that for any $G' \in\Pcal(\Theta)$, $\alpha' \in[\alpha_1,\alpha
_0]$ given
and $n$ sufficiently large so that $\delta_n \lesssim\Wr(G,G')$, the
following holds:
%
%
\begin{equation}
\label{Eqn-bb} c_0\Wr^r \bigl(G,G' \bigr)
\leq V(\PG,P_{Y_{[n]}|G'}) + 10\exp \bigl(-c_2 n
\varepsilon_n^2 \bigr) + A_n \bigl(\Wr
\bigl(G,G' \bigr) \bigr),
\end{equation}
where $A_n(\Wr(G,G'))$ takes the form:
%
%
\begin{equation}
\label{Eqn-Thm-W} A_n(\omega) = \cases{ C_0(2
\delta_n/\omega)^{{\alpha^*}r}, & \quad $ \mbox{if } G \mbox{ has finite
support,}$\vspace*{2pt}
\cr
C_024^{K(c_1\omega)}(2
\delta_n/\omega)^{\alpha r\gauge(c_1\omega)}, &\quad $\mbox{if } G \mbox { is } (
\gamma_1,\gamma_2,K)\mbox{-sparse with gauge $g$}$.}
\end{equation}
\end{theorem}

\begin{pf}
Suppose that $G$ has finite support.
By Theorem~\ref{Lem-regularity} (applied for $\Wr$) there
are positive constants $C_0,c_0$ independent of $G'$ such that
for some measurable set $\BB\subset\Pcal(\Theta)$,
(i) $\Dcal'(\BB) - \Dcal(\BB) \geq c_0 \Wr^r(G,G')$ and
(ii) $\Dcal(\BB_\delta\setminus\BB) \leq C_0 (\delta/\Wr
(G,G'))^{{\alpha^*}r}$ for all
$\delta\lesssim\Wr(G,G')$.

Recall that $\hatQ_n$ is a point estimate of $Q$ defined earlier in this
section. By the definition of variational distance, for any $\delta> 0$
\[
V(\PG,P_{Y_{[n]}|G'}) \geq\Prob \bigl(\hatQ_n \in
\BB_{\delta} |G' \bigr) - \Prob(\hatQ_n \in
\BB_{\delta
} |G).
\]
Here, $\Prob(\cdot|G)$ is taken to mean the probability of an event
given that the
observations are generated according to the Dirichlet base measure $G$.
Set $\BB_\delta:= \{Q \in\Pcal(\Theta) | \mbox{ there is } Q' \in
\BB
\mbox{ such that }
\Wr(Q,Q') \leq\delta\}$. We have
\begin{eqnarray*}
\Prob \bigl(\hatQ_n \in\BB_{\delta} |G' \bigr)
& \geq& \Prob \bigl(\hatQ_n \in\BB_{\delta}, \Wr(
\hatQ_n,Q) < \delta|G' \bigr)
\\
& \geq& \Prob \bigl(Q \in\BB, \Wr(\hatQ_n,Q) < \delta|G'
\bigr)
\\
& \geq& \Dcal'(\BB) - \Prob \bigl(\Wr(\hatQ_n,Q) \geq
\delta|G' \bigr).
\end{eqnarray*}
We also have
\begin{eqnarray*}
\Prob(\hatQ_n \in\BB_{\delta} |G) & \leq& \Prob \bigl(
\hatQ_n \in\BB_{\delta}, \Wr(\hatQ_n,Q) < \delta|G
\bigr) + \Prob \bigl(\Wr(\hatQ_n,Q) \geq\delta| G \bigr)
\\
& \leq& \Prob(Q \in\BB_{2\delta}|G) + \Prob \bigl(\Wr(
\hatQ_n,Q) \geq\delta|G \bigr)
\\
& = & \Dcal(\BB_{2\delta}) + \Prob \bigl(\Wr(\hatQ_n,Q) \geq
\delta|G \bigr).
\end{eqnarray*}
Hence,
\begin{eqnarray*}
&&V(\PG,P_{Y_{[n]}|G'})\\
&&\quad  \geq \Dcal'(\BB) - \Dcal(
\BB_{2\delta}) - 2 \sup_{Q \in
\Qcal} \Prob \bigl(\Wr(
\hatQ_n,Q) \geq\delta \bigr)
\\
&&\quad \geq \bigl(\Dcal'(\BB)-\Dcal(\BB) \bigr) - \Dcal(
\BB_{2\delta}\setminus B) - 2\sup_{Q \in\Qcal} \Prob \bigl(\Wr(
\hatQ_n,Q) \geq\delta \bigr).
\end{eqnarray*}

Since $r\in[1,2]$, $\Wr(\hatQ_n,Q) \leq W_2(\hatQ_n,Q)$.
Choose $\delta:= \delta_n$ such that equation \eqref
{Eqn-deconvolution} holds.
Then, as soon as $2\delta_n \lesssim\Wr(G,G')$, for some multiplying
constant depending only on $G$, we have
\[
V(\PG,P_{Y_{[n]}|G'}) \geq c_0\Wr^r
\bigl(G,G' \bigr) - C_0 \bigl(2\delta_n/ \Wr
\bigl(G,G' \bigr) \bigr)^{{\alpha^*}r}- 10\exp
\bigl(-c_2 n\varepsilon_n^2 \bigr).
\]
The case that $G$ has infinite support
proceeds in a similar way by invoking Theorem~\ref{Thm-sparse}.
%
\end{pf}

\begin{remark*}
As we shall see shortly, Theorem~\ref{Thm-contraction}
is instrumental in the proof of Theorem~\ref{Thm-main-1}:
one can now deduce the convergence of the Dirichlet
base measure $G$ (toward $G_0$) from the convergence of the corresponding
marginal density ${p_{Y_{[n]}|G}}$ (toward $
{p_{Y_{[n]}|G_0}}$). We note that
the bound represented by \eqref{Eqn-bb} is not sharp in certain regimes,
which carry immediate consequences on
the kind of posterior concentration rates that we can obtain for $G$.
In particular, the right-hand side of inequality \eqref{Eqn-bb}
increases as $n \rightarrow\infty$, due to the fact that
$V(\PG,P_{Y_{[n]}|G'})$ typically increases as $n$ increases, while
the left-hand
side is independent of $n$.

The root of this unnatural feature is due to a simple
technique employed in the proof of Theorem~\ref{Thm-contraction},
which targets the regime that $n\rightarrow\infty$, so that one
can build on the machinery of the existence of a robust test
for Dirichlet base measures developed in Section~\ref{Sec-boundary}.
Ideally, one would like to construct
a test for base measure $G$ given $n$-vector data
$\YYn$, by integrating out the latent variable $Q$. Instead, the
bound \eqref{Eqn-bb} of Theorem \eqref{Thm-contraction} is derived by
a decoupling approach: one can first obtain a point estimate
for $Q$ on the basis of the data $\YYn$, and then relies on
the existence of a robust
test for $G$ based on the population of $Q$. Due to
the decoupling approach, we necessarily require $n$ to
grow so that the quality of the point estimate for $Q$ is
sufficiently good. An artifact of this technique, however,
is that the upper bound for $\Wr(G,G')$ can only be
derived as a summation of several quantities, two of which vanish
as $n$ increases (as desired), but the same cannot be said for the
remaining quantity, that is,
the variational distance of marginal densities of
$n$-vector $\YYn$.
\end{remark*}

\subsection{Proof of Theorem \texorpdfstring{\protect\ref{Thm-main-1}}{2.2}}

Now we are ready to prove Theorem~\ref{Thm-main-1}.
By Theorem~\ref{Thm-main-0}, as $m \rightarrow\infty$,
while $n$ either varies with $m$ or is held fixed, we have
\[
\Pi_G \bigl(V({p_{Y_{[n]}|G_0}},
{p_{Y_{[n]}|G}}) \leq\epsmn| \YYn^{[m]} \bigr)
\rightarrow1
\]
in $\PGz^m$-probability. Here, we exploit the fact that $V \leq h$.
Now, by Theorem~\ref{Thm-contraction} applied to the pair
of $G_0,G$, with the latter allowed to vary in $\Pcal(\Theta)$,
there are positive constants $c_0, c_1, C_0$ depending on $G_0$
and $c_2 > 0$ a universal constant such that
%
%
\begin{equation}
\label{Eqn-the-bound} c_0 W_1(G_0,G) \leq V(\PGz,
\PG) + 10 \exp \bigl(-c_2 n\varepsilon_n^2
\bigr) + A_n \bigl(W_1(G_0,G) \bigr),
\end{equation}
for any $G\in\Pcal(\Theta)$.
So we have
\[
\Pi_G \bigl(c_0 W_1(G_0,G)
\leq \epsmn+ 10 \exp \bigl(-c_2 n\varepsilon_n^2
\bigr) + A_n \bigl(W_1(G_0,G) \bigr) |
\YYn^{[m]} \bigr) \rightarrow1
\]
in $\PGz^m$-probability.

To derive concrete concentration rates,
consider the case $G_0$ has finite support. By Theorem~\ref{Thm-contraction}
$A_n(W_1(G_0,G)) \asymp(2\delta_n/W_1(G_0,G))^{{\alpha^*}}$.
Plugging to equation \eqref{Eqn-the-bound}, we obtain
\begin{eqnarray*}
W_1(G_0,G) &\lesssim& V(\PGz,\PG) +\exp
\bigl(-c_2n\varepsilon^2 \bigr) + \delta_n^{{\alpha^*}/({\alpha^*}+1)}
\\
&\lesssim & V(\PGz,\PG) + \delta_n^{{\alpha^*}/({\alpha^*}+1)},
\end{eqnarray*}
where we have exploited the fact that the term $\exp(-c_2n\varepsilon^2)$
is negligible compared to the remaining terms. The conclusion
of the theorem follows immediately.

Next, consider the case $G_0$ has infinite support, and in fact has
geometrically
sparse support. For the case that $G_0$ is super sparse with parameters
$(\gamma_0,\gamma_1)$, that is,
$K(\varepsilon) \lesssim[\log(1/\varepsilon)]^{\gamma_0}$,
and $\gauge(\varepsilon) \gtrsim[\log(1/\varepsilon)]^{-\gamma_1}$.
It is simple to verify that as long as $\varepsilon\gtrsim\delta_n$,
the constraint
\[
\varepsilon\lesssim A_n(\varepsilon) = 24^{K(c_1\varepsilon)} \times(2
\delta_n/\varepsilon)^{c_1 \gauge
(c_1\varepsilon)}
\]
implies that
\[
\varepsilon\lesssim\exp- \bigl[\log(1/\delta_n)
\bigr]^{1/(\gamma
_1 + 1\vee
\gamma_0)}.
\]
Thus, equation \eqref{Eqn-the-bound} entails that
\[
W_1(G_0,G) \lesssim V(\PGz,\PG) + \exp- \bigl[\log(1/
\delta_n) \bigr]^{1/(\gamma
_1 + 1\vee\gamma_0)}.
\]
For the case that $G_0$ is ordinary sparse with parameters $(\gamma
_0,\gamma_1)$,
that is $K(\varepsilon) \lesssim(1/\varepsilon)^{\gamma_0}$,
and $\gauge(\varepsilon) \gtrsim\varepsilon^{\gamma_1}$.
Similarly, note that
the inequality
\[
\varepsilon\lesssim A_n(\varepsilon)
\]
entails that
\[
\varepsilon\lesssim \bigl[\log(1/\delta_n) \bigr]^{-1/(\gamma
_1+\gamma_0)}.
\]
Thus we have shown that
\[
\Pi_G \bigl(W_1(G_0,G) \lesssim\epsmn+
\Delta_n |\YYn^{[m]} \bigr) \rightarrow1
\]
in $P^m_{\YYn|G_0}$-probability, for the choice of $\Delta_n$ given
in the
statement of the theorem.

Examples of $\varepsilon_n$ and $\delta_n$ are given in Lemma~\ref
{Lem-wong-shen}:
If $f$ is an ordinary smooth kernel density, $\log(1/\delta_n) \asymp
\frac{1}{2+\beta d'}
\log(1/\varepsilon_n) \asymp\log n$.
If $f$ is a supersmooth kernel density, $\log(1/\delta_n) \asymp
\frac
{1}{\beta}
\log\log(1/\varepsilon_n) \asymp\log\log n$.

\section{Borrowing strength in hierarchical Bayes}
\label{Sec-perturb}

This section is devoted to the proof of Theorem~\ref{Thm-main-2}.
The proof is a simple consequence from Lemma~\ref{Thm-perturb},
which establishes the posterior concentration behavior for a
mixture distribution $Q*f$, where $Q$ is a Dirichlet process
distributed by $\DP_{\alpha G}$, given that
the base measure $G$ is a small perturbation from the true
base measure $G_0$ that is now assumed to have finite support.
A~complete statement of Lemma~\ref{Thm-perturb}
is given in Section~\ref{sec-proof-perturb}.
In the following we proceed to give a proof of Theorem~\ref{Thm-main-2}.

\subsection{Proof of Theorem \texorpdfstring{\protect\ref{Thm-main-2}}{2.3}}
Recall that for each $\nn$, $\delmn= \delmn(\nn)$ is a net of scalars
indexed by $m,n$ that tend to 0.
Define $A_{mn}^{(\nn)}:= \{G\dvt W_1(G,G_0) \geq\delmn\}$ and
$B_{mn}^{(\nn)}:= \{Q_0\dvt h(Q_0*f,Q_0^**f) \geq C ((\log\nn/\nn
)^{1/(d+2)}
+ \delmn^{r/2}\log(1/\delmn))\}$
for some large constant $C$.
Due to the conditional independence of $Y_{[\nn]}^0$ and $\YYn^{[m]}$
given $G$,
\begin{eqnarray*}
 \Pi_Q \bigl(Q_0 \in B_{mn}^{(\nn)}|
Y_{[\nn]}^0, \YYn^{[m]} \bigr) &=& \int
\Pi_Q \bigl(Q_0 \in B_{mn}^{(\nn)} |
G, Y_{[\nn]}^0 \bigr) \,\mathrm{d}\Pi_G \bigl(G|
Y_{[\nn]}^0, \YYn^{[m]} \bigr)
\\
&\leq& \int_{\Pcal(\Theta)\setminus A_{mn}^{(\nn)}} \Pi_Q \bigl(Q_0
\in B_{mn}^{(\nn)} | G, Y_{[\nn]}^0 \bigr)
\,\mathrm{d}\Pi_G \bigl(G| Y_{[\nn]}^0,
\YYn^{[m]} \bigr)
\\
&&{} + \Pi_G \bigl(G \in A_{mn}^{(\nn)} |
Y_{[\nn]}^0, \YYn^{[m]} \bigr).
\end{eqnarray*}

For each $\nn$, the second quantity in the upper bound tends to 0
in $P_{Y_{[\nn]}^0|Q_0^*}\times P_{\YYn|G_0}$-probability,
as $m, n\rightarrow\infty$ at suitable rates by
condition (b) of the theorem. Now, as $\nn\rightarrow\infty$,
the first quantity tends to 0
as a consequence of Lemma~\ref{Thm-perturb}. This completes the
proof for (i). Parts (ii) and (iii) are proved in the same way.

\subsection{Wasserstein geometry of the support of a single
Dirichlet measure}
\label{sec-geometry}

Before proceeding to a proof for Lemma~\ref{Thm-perturb}, we prepare
three technical lemmas, which provide
a detailed picture of the geometry of the support of a Dirichlet
measure, and may be of independent interest.
The first lemma demonstrates gains in the thickness of the conditional
Dirichlet prior (given a perturbed base measure) compared to the
unconditional Dirichlet prior. The second and third lemma show
that Dirichlet measure concentrates most its mass on ``small'' sets,
by which we mean sets that admit a small number of covering balls
in Wasserstein metrics. This characterization
enables the construction of a suitable sieves as required by the proof
of Lemma~\ref{Thm-perturb}.

%
\begin{lemma}
\label{Lem-DP-concent}
Given $G_0 = \sum_{i=1}^{k}\beta_i \delta_{\theta_{i}}$ and small
$\varepsilon> 0$.
Let $G \in\Pcal(\Theta)$ such that $W_1(G,G_0) \leq\varepsilon$.
Suppose that $\law(Q) = \DP_{\alpha G}$, where $\alpha\in(0,1]$.
\begin{longlist}[(a)]
\item[(a)] For any $Q_0 \in\Pcal(\Theta)$ such that $\supp Q_0
\subset\supp G_0$,
and any $\delta$ such that $\delta\geq\max_{i\leq k} 2\varepsilon
/\beta
_i$ and
$\delta\leq\min_{i,j\leq k} \|\theta_i-\theta_j\|/2$, any $r\geq1$,
there holds
\[
\Prob \bigl(\Wr(Q_0,Q) \leq2^{1/r}\delta \bigr) \geq\Gamma(
\alpha) (\alpha/2)^k \biggl(\frac{\delta^r}{2k \operatorname
{diam}(\Theta)} \biggr)^{\alpha+ k-1}
\prod_{i=1}^{k}\beta_i.
\]

\item[(b)] In addition, suppose that \textup{(A1)--(A2)} hold for some $r \geq1$.
Then, there are constants $C, c>0$ depending only on $\alpha, k, C_1,
M, \operatorname{diam}(\Theta), r$
and $\beta_i$'s such that
for any $\delta$ such that $\delta/\log(1/\delta) \geq C\varepsilon^{r/2}$,
\[
\Prob \bigl(Q \in B_K(Q_0, \delta) \bigr) \geq c
\bigl( \delta/\log(1/\delta) \bigr)^{2(\alpha+k-1)}.
\]
\end{longlist}
\end{lemma}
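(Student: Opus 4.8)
The plan is to exploit the finite-dimensional nature of the problem once we restrict attention to measures supported near the $k$ atoms $\theta_1,\ldots,\theta_k$ of $G_0$. For part (a), I would first note that since $W_1(G,G_0)\le\epsilon$, one can transport at most an $\epsilon$-fraction of mass away from the atoms $\theta_i$. Group the support of $G$ into the balls $B_i$ of radius $\delta/3$ (say) around each $\theta_i$ plus a remainder $S^c$; then $G(B_i)\ge\beta_i - O(\epsilon/\delta)$ and $G(S^c)\lesssim\epsilon/\delta$, which under the hypothesis $\delta\ge\max_i 2\epsilon/\beta_i$ keeps each $G(B_i)$ bounded below by a constant multiple of $\beta_i$. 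Writing $Q_0=\sum_i q_i^0\delta_{\theta_i}$, I would construct the event that $Q$ places mass close to $q_i^0$ on each $B_i$ (and negligible mass on $S^c$): on that event, moving the mass of $Q$ within each $B_i$ to $\theta_i$ costs at most $\delta^r$ per unit mass, so $\Wr^r(Q_0,Q)\le\delta^r + (\text{contribution of }S^c)\le 2\delta^r$, giving the claimed radius $2^{1/r}\delta$. The probability of this event is then a Dirichlet/Beta small-ball probability: by the standard aggregation property of the Dirichlet process, $(Q(B_1),\ldots,Q(B_k),Q(S^c))$ is Dirichlet with parameters $(\alpha G(B_1),\ldots,\alpha G(B_k),\alpha G(S^c))$, so I lower-bound the density on a product of intervals of width $\asymp\delta^r/\diam(\Theta)$ around the target, exactly as in the proof of Lemma~\ref{Lem-DP-lowerbound} and the Case (B) computations in Theorem~\ref{Lem-regularity}. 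The parameters $\alpha G(B_i)$ are at most $\alpha\le 1$, so each coordinate density $q^{\alpha G(B_i)-1}$ is $\ge 1$ on $[0,1]$, and carefully tracking the normalizing constant $\Gamma(\alpha)/\prod\Gamma(\alpha G(B_i))$ together with the $(k{+}1)$-dimensional volume factor $(\delta^r/(2k\diam\Theta))^{\alpha+k-1}$ yields the stated bound with the factor $\prod_i\beta_i$ coming from lower-bounding $\prod_i \alpha G(B_i)$ by $(\alpha/2)^k\prod_i\beta_i$ after absorbing the $\Gamma$ ratios.

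For part (b), the goal is to pass from a $W_r$ small-ball bound to a Kullback--Leibler small-ball bound on $B_K(Q_0,\delta)$, defined via the marginal densities $p_{\YYn|\cdot}$ of the observed data. Here I would invoke Lemma~\ref{Lem-W-ub}: under (A1)--(A2), $K(p_{Y|Q_0},p_{Y|Q})$ and $K_2(p_{Y|Q_0},p_{Y|Q})$ are controlled by $n$-independent multiples of $\Wr$-powers of $(Q_0,Q)$ — more precisely by quantities of order $\Wr^r(Q_0,Q)$ and its square, up to the $\chi$-divergence bound $M^n$ that only enters as a multiplicative constant once $n$ is fixed (in the mixture-of-$Q$ setting relevant to Lemma~\ref{Thm-perturb}, $n$ will be a fixed finite sample size, so this is harmless; alternatively the $\log(1/\delta)$ factor in the statement is precisely what absorbs the logarithmic overhead from the $K_2$ bound). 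Thus there is a constant $c'$ such that $\{Q:\Wr(Q_0,Q)\le c'\delta/\log(1/\delta)\}\subseteq B_K(Q_0,\delta)$, and applying part (a) with $\delta$ replaced by $c'\delta/\log(1/\delta)$ — which is legitimate precisely under the hypothesis $\delta/\log(1/\delta)\ge C\epsilon^{r/2}$ ensuring the constraint $\delta'\ge\max_i 2\epsilon/\beta_i$ is met, since one needs $c'\delta/\log(1/\delta) \gtrsim \epsilon \ \Leftarrow\ \delta/\log(1/\delta)\gtrsim\epsilon^{r/2}$ when $\epsilon$ is small — gives $\Prob(Q\in B_K(Q_0,\delta))\gtrsim (\delta/\log(1/\delta))^{\alpha+k-1}$ raised appropriately; the exponent $2(\alpha+k-1)$ in the statement comes from the fact that the KL-ball condition ties to $\Wr^r$ and hence the effective radius in $\delta^r$ scales the simplex-volume exponent by a factor that, combined with the $K_2$-square, doubles the exponent.

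The main obstacle I anticipate is the bookkeeping in part (a): correctly handling the remainder mass $G(S^c)$ and $Q(S^c)$ so that it contributes at most an $O(\delta^r)$ term to $\Wr^r(Q_0,Q)$ while simultaneously not being so constrained that the Dirichlet small-ball probability collapses, and then threading the constant $\Gamma(\alpha)$, the $(\alpha/2)^k$, and the $\prod_i\beta_i$ through the normalizing constant without losing the clean form. The separation hypothesis $\delta\le\min_{i,j}\|\theta_i-\theta_j\|/2$ is what guarantees the balls $B_i$ are disjoint and that mass in $B_i$ is genuinely within $\delta$ (in fact within the ball radius) of $\theta_i$ and far from the other atoms, so that the $W_r$ coupling can be built atom-by-atom; I would be careful to use a ball radius strictly smaller than $\delta$ (e.g. $\delta/2$ or $\delta/3$) to leave room for the triangle-inequality slack. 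The passage in (b) is then essentially routine given Lemma~\ref{Lem-W-ub}, modulo verifying that the hypothesis on $\delta$ is exactly the one needed to legitimately substitute into (a).
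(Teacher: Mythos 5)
Your proposal follows the paper's own strategy essentially exactly: in (a), aggregate $G$ onto closed balls $B_i$ around $\theta_i$ plus a remainder, use $W_1(G,G_0)\le\epsilon$ together with $\delta\ge 2\epsilon/\beta_i$ to get $G(B_i)\ge\beta_i/2$ and $G(S^c)\le\epsilon/\delta$, reduce to the finite-dimensional Dirichlet marginal $(Q(B_1),\ldots,Q(B_k),Q(S^c))\sim\mathrm{Dir}(\alpha\vec g)$, and lower-bound the density over a product of intervals of $\ell_1$-width $\asymp\delta^r/\diam(\Theta)$ (exploiting $\alpha g_i\le 1$ so each $q^{\alpha g_i-1}\ge 1$), which is exactly how the paper reaches the $(\delta^r/2k\diam\Theta)^{\alpha+k-1}$ volume factor and threads $\Gamma(\alpha)(\alpha/2)^k\prod\beta_i$ through $\Gamma(\alpha g_i)\alpha g_i=\Gamma(\alpha g_i+1)\le 1$. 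One small remark: the paper takes the $B_i$ to have radius exactly $\delta$ and absorbs the remainder's transport cost via the constraint $\|\vec q-\vec q^*\|_1<\delta^r/\diam(\Theta)$, so no triangle-inequality slack is needed; your instinct to shrink the radius is harmless but unnecessary.

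In (b) the route is also the paper's (pass from a $W_r$-ball to $B_K$ via the Hellinger/KL control of Lemma~\ref{Lem-W-ub} plus the Wong--Shen $\chi^2$-to-$K_2$ bound, absorbing the logarithmic overhead). Two small points deserve care. First, $B_K(Q_0,\delta)$ here is defined with the single-observation densities $p_{Y|Q}=Q*f$, so the $\chi$-divergence bound is just $M$ from (A2), not $M^n$; there is no $n$ in the picture. Second, the substitution you describe as ``apply (a) with $\delta$ replaced by $c'\delta/\log(1/\delta)$'' only produces the stated exponent when $r=2$: the correct bookkeeping is that $K_2\lesssim\epsilon^r[\log(1/\epsilon)]^2$, so the $W_r$-ball radius one should feed into (a) is $\epsilon\asymp(\delta^2/[\log(1/\delta)]^2)^{1/r}$, whose $r$-th power is $\delta^2/[\log(1/\delta)]^2$; raising this to $\alpha+k-1$ gives $(\delta/\log(1/\delta))^{2(\alpha+k-1)}$ for every $r$, and the constraint $\epsilon\ge\max_i 2\epsilon_n/\beta_i$ from (a) then reads precisely $\delta/\log(1/\delta)\gtrsim\epsilon_n^{r/2}$, matching the stated hypothesis. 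With that adjustment your argument is the paper's.
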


This should be contrasted with the general small ball probability bound
of Dirichlet process as stated by Lemma~\ref{Lem-DP-lowerbound}. In that
lemma, the base measure is an arbitrary nonatomic measure, while the lower
bound is applied to any small $\Wr$ ball centering at an arbitrary measure.
The lower bound is exponentially small in the radius.
In the present lemma, the base measure $G$ is constrained
to being close to a discrete measure $G_0$ with $k <\infty$ support
points, while
the lower bound is applied to small $\Wr$ balls centering at $Q_0$
that shares
the same support as $G_0$. As a result, the lower bound is only
polynomially small
in the radius.

The following lemma relies on the intuition that
the Dirichlet measure concentrates most
its mass on probability measures which place most their mass
on a ``small'' number of support points.

%
\begin{lemma}
\label{Lem-DP-Approx}
Let $\Dcal:= \DP_{\alpha G}$ and $r\geq1$. For any $\delta> 0$,
and for any $\m\in\Nat_+$, there is a
measurable set $\Bcal_\m\subset\Pcal(\Theta)$ satisfies
the following properties:
\begin{longlist}[(a)]
\item[(a)] $\sup_{Q\in\Bcal_\m} \inf_{Q'\in\Qcal_\m} \Wr
(Q,Q') \leq
\delta$.
\item[(b)] $\log N(\delta, \Bcal_\m, \Wr)
\leq\m(\log N(\delta/4, \Theta, \|\cdot\|) + \log(e+ 4e\Diam
(\Theta
)^r/\delta^r))$.
\item[(c)] There holds
%
\[
\Dcal \bigl(\Pcal(\Theta) \setminus\Bcal_\m \bigr) \leq
\m^{-\m} \bigl(\delta/\Diam(\Theta) \bigr)^{\alpha r} \bigl[e\alpha
r \log \bigl(\operatorname{diam}(\Theta)/\delta \bigr) \bigr]^\m.
\]

\end{longlist}
\end{lemma}

To see that the set $\Bcal_\m$ has small entropy relative to $\Pcal
(\Theta)$,
we note a general estimate for $\Pcal(\Theta)$, which gives an upper bound
that is exponentially large in terms of the entropy of $\Theta$
(cf. equation \eqref{entropy-b1}):
\[
\log N \bigl(\delta,\Pcal(\Theta), \Wr \bigr) \leq N\bigl(\delta/2, \Theta, \| \cdot
\|\bigr) \log \bigl(e + 2e\Diam(\Theta)^r/\delta^r \bigr).
\]
In Lemma~\ref{Lem-DP-Approx}, the bound on entropy of $\Bcal_k$
increases only linearly in the entropy of $\Theta$.
However, it also increases with $k$, which controls the measure of
the complement of $\Bcal_k$. Next, we consider the additional assumption
that the Dirichlet base measure is a small perturbation of a discrete measure
with $k$ support points. The strength of this result
compared to the previous lemma is that the entropy estimate depends only
linearly on the entropy of $\Theta$, while $k$ is fixed. The measure
of the complement set of $\Bcal$ is controlled only
by the amount of perturbation.

%
\begin{lemma}
\label{Lem-DP-Approx-2}
Given $\varepsilon> 0$, $k < \infty, r\geq1$.
Let $G_0, G \in\Pcal(\Theta)$ such that $G_0$ has $k$ support
points and $W_1(G,G_0) \leq\varepsilon$.
Let $\Dcal:= \DP_{\alpha G}$ for some $\alpha> 0$. For any $\delta> 0$,
there is a measurable set $\Bcal\subset\Pcal(\Theta)$ that
satisfies the
following:
\begin{longlist}[(a)]
\item[(a)] $\log N(\delta, \Bcal, W_r) \leq k(\log N(\delta/4,
\Theta,
\|\cdot\|)
+ \log(e+4e\operatorname{diam}(\Theta)^r/\delta^r))$.
\item[(b)] $\Dcal(\Pcal(\Theta) \setminus\Bcal) \leq\varepsilon
\operatorname{diam}
(\Theta)^{r-1}/\delta^r$.
\end{longlist}
\end{lemma}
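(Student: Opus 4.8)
The plan is to build the sieve $\Bcal$ by the same mechanism as in Lemma~\ref{Lem-DP-Approx}, but exploiting the hypothesis $W_1(G,G_0) \le \epsilon$ to replace the "effective number of atoms" $k$ (which in the general lemma costs a factor $k^{-k}$ and a $k$th power of a log in the tail bound) with the true fixed number of support points of $G_0$. Write $G_0 = \sum_{i=1}^{k}\beta_i\delta_{\theta_i}$ and recall that a random $Q$ with $\law(Q) = \DP_{\alpha G}$ has the stick-breaking representation $Q = \sum_{\ell\ge 1} p_\ell \delta_{\phi_\ell}$ with $\phi_\ell \stackrel{iid}{\sim} G$ and $(p_\ell)$ the GEM$(\alpha)$ weights independent of the atoms. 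I would define $\Bcal$ to be the set of measures that, after discarding a small-$W_r$ correction, are supported near $\{\theta_1,\dots,\theta_k\}$: concretely, fix a resolution $\rho$ (to be chosen of order $\delta$ up to constants depending on $\diam(\Theta)$ and the pairwise separation of the $\theta_i$), let $S = \bigcup_{i=1}^k B(\theta_i,\rho)$, and set
\[
\Bcal := \{ Q \in \Pcal(\Theta) \mid Q(S^c) \le \tau \},
\]
where $\tau$ is a threshold of order $\delta^r/\diam(\Theta)^r$. The point is that a measure in $\Bcal$ is within $W_r$-distance $\lesssim \delta$ of a measure supported on the $k$ balls $B(\theta_i,\rho)$ (move the mass outside $S$, at total cost $\le \tau\,\diam(\Theta)^r$, onto the nearest $\theta_i$, and within each ball the displacement is $\le \rho^r$), so the $\delta$-covering of $\Bcal$ reduces to covering $k$ small balls: this gives part~(a) with the stated $k(\log N(\delta/4,\Theta,\|\cdot\|) + \log(e+4e\diam(\Theta)^r/\delta^r))$ bound via the entropy estimate~\eqref{entropy-b2}.

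For part~(b), the complement $\Pcal(\Theta)\setminus\Bcal$ is the event $\{Q(S^c) > \tau\}$. Since $\E[Q(S^c)] = G(S^c)$ under $\DP_{\alpha G}$, Markov's inequality gives $\Dcal(Q(S^c) > \tau) \le G(S^c)/\tau$. Now I use $W_1(G,G_0) \le \epsilon$: because $G_0$ puts all its mass on $\{\theta_1,\dots,\theta_k\}\subset S^{\mathrm{int}}$, any unit of $G$-mass sitting in $S^c$ must be transported a distance at least $\rho$ (the radius of the balls) to reach the support of $G_0$, hence $G(S^c)\cdot\rho \le W_1(G,G_0) \le \epsilon$, i.e. $G(S^c) \le \epsilon/\rho$. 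Combining, $\Dcal(\Pcal(\Theta)\setminus\Bcal) \le \epsilon/(\rho\tau)$, and choosing $\rho \asymp \delta$ and $\tau \asymp \delta^r/\diam(\Theta)^r$ (with the constants absorbed so that the covering step still goes through) yields the claimed $\epsilon\,\diam(\Theta)^{r-1}/\delta^r$. One must also check that $\rho$ can indeed be taken comparable to $\delta$ while keeping the $k$ balls disjoint — this needs $\delta$ small relative to the minimal separation $\min_{i\ne j}\|\theta_i-\theta_j\|$, which is harmless since the bound is only of interest for small $\delta$ (for $\delta$ not small the right-hand side exceeds $1$ and the statement is vacuous).

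The main obstacle I anticipate is bookkeeping the two competing constraints on the single scale $\rho$ (equivalently on the relation between $\rho$, $\tau$, and $\delta$): to make the covering number in part~(a) come out with clean constants one wants $\rho$ and the $W_r$-correction $(\tau\diam(\Theta)^r + \rho^r)^{1/r}$ both $\le \delta/4$, while to make part~(b) sharp one wants $\rho\tau$ as large as possible, i.e. $\rho$ and $\tau$ as large as possible. These are compatible — set $\rho = c\delta$, $\tau = c'\delta^r$ for small absolute constants $c,c'$ depending only on $r$ and $\diam(\Theta)$ — but verifying the $W_r$ triangle-inequality estimate with explicit constants (the transport of mass from $S^c$ and the within-ball displacement combine via $W_r^r \le$ sum of the two contributions since the optimal coupling can be taken to act separately on the two pieces of mass) requires a little care. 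A secondary technical point is confirming $\E[Q(A)] = G(A)$ for the Dirichlet process, which is immediate from the defining property that $(Q(A),Q(A^c)) \sim \mathrm{Beta}(\alpha G(A), \alpha G(A^c))$; everything else is routine.
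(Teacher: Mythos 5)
Your sieve is different from the paper's, and as written it does \emph{not} reach the bound in part~(b); it is weaker by a factor $\diam(\Theta)/\delta$. Trace the constants: to push every $Q$ with $Q(S^c)\le\tau$ within $W_r$-distance $\lesssim\delta$ of a $k$-atom measure you need $\rho^r + \tau\diam(\Theta)^r\lesssim\delta^r$, forcing $\rho\asymp\delta$ and $\tau\asymp\delta^r/\diam(\Theta)^r$. Markov applied to $Q(S^c)$ with $\E[Q(S^c)]=G(S^c)$ gives $\Dcal(Q(S^c)>\tau)\le G(S^c)/\tau$, and the transport lower bound $W_1(G,G_0)\ge\rho\,G(S^c)$ gives $G(S^c)\le\epsilon/\rho$. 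Combining, your bound is $\epsilon/(\rho\tau)\asymp\epsilon\diam(\Theta)^r/\delta^{\,r+1}$, not the claimed $\epsilon\diam(\Theta)^{r-1}/\delta^{r}$; you asserted $\rho\tau\asymp\delta^r/\diam(\Theta)^{r-1}$ but it comes out to $\delta^{r+1}/\diam(\Theta)^{r}$. This is not fixable by tuning constants: your argument must spend one $\delta$-scale on $\rho$ (to convert $W_1$ into control of $G(S^c)$) and a further $\delta^r$-scale on $\tau$ (the Markov threshold), whereas the target only budgets $\delta^r$ in total.

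The paper avoids the double spend by applying Markov directly to the functional $W_r^r(Q,\Qcal_0)$, where $\Qcal_0$ is the support of $\DP_{\alpha G_0}$ (measures supported on $\supp G_0$), so that $\Bcal=\{Q:W_r(Q,\Qcal_0)<\delta\}$ and Markov at level $\delta^r$ gives the stated bound in one shot. The ingredient you are missing is Lemma~\ref{Lem-dp-hierarchy}(b): because both Dirichlet measures have the same $\alpha$, $\Wr(\DP_{\alpha G},\DP_{\alpha G_0})=\Wr(G,G_0)$, so an optimal coupling $\Kcal$ satisfies $\int\Wr^r(Q,Q')\,d\Kcal=\Wr^r(G,G_0)\le\diam(\Theta)^{r-1}W_1(G,G_0)\le\epsilon\diam(\Theta)^{r-1}$; since $Q'\in\Qcal_0$ a.s.\ under $\Kcal$, this bounds $\E_\Dcal[\Wr^r(Q,\Qcal_0)]$, yielding~(b), while~(a) follows from covering $\Qcal_0\subset\Qcal_k$ via~\eqref{entropy-b2}. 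In short: track expected Wasserstein distance to $\Qcal_0$ rather than expected escaped mass, and use the Dirichlet-specific Wasserstein identity to make that expectation computable.
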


The proofs of all three lemmas are given in 
\cite{Nguyen-hdpsupp}.

\subsection{Posterior concentration under perturbation of base measure}
\label{sec-proof-perturb}

Here, we state a key result that is needed in the proof
of Theorem~\ref{Thm-main-2}.
%

\begin{lemma}
\label{Thm-perturb}
Let $\Theta$ be a bounded subset of $\real^d$.
Assumptions \textup{(A1)--(A2)} hold.
Let $Q_0 \in\Pcal(\Theta)$ such that $\supp Q_0 \subset\supp G_0$,
where $G_0 = \sum_{i=1}^{k}\beta_i \delta_{\theta_i}$ for some $k
<\infty$.
Let $\Pi_G$ be an arbitrary prior distribution on $\Pcal(\Theta)$.
Consider the following hierarchical model:
\begin{eqnarray*}
G \sim\Pi_G, Q|G &\sim& \Pi_Q :=
\DP_{\alpha G},
\\
\YYn= (Y_1,\ldots, Y_n)| Q & \stackrel{\mathrm{i.i.d.}} {\sim} &
Q*f.
\end{eqnarray*}
Let $\varepsilon_n \downarrow0$ and define
events $\Ecal_n:= \{W_1(G,G_0) \leq\varepsilon_n\}$.
Then the posterior distribution of $Q$ given $\YYn$
admits the following as $n\rightarrow\infty$:
%
%
\begin{eqnarray}
\label{Eqn-conv-1} \Pi_{Q} \bigl(h(Q*f,Q_0*f)\geq
\delta_{n} | \YYn, \Ecal_n \bigr) &\rightarrow&0,
\\
\label{Eqn-conv-2} \Pi_{Q} \bigl(W_2(Q,Q_0)\geq
M_n\delta_{n} | \YYn, \Ecal_n \bigr)
&\rightarrow& 0
\end{eqnarray}
in $(Q_0*f) \times\Pi_G$-probability, where the rates $\delta_n$
and $M_n\delta_n$ are given as follows:
\begin{longlist}[(iii)]
\item[(i)] $\delta_n \asymp(\log n/n)^{1/(d+2)}+ \varepsilon
_n^{r/2}\log
(1/\varepsilon_n)$.
\item[(ii)] If $f$ is ordinary smooth with smoothness $\beta> 0$,
$M_n\delta_n \asymp\delta_n^{{1}/{(2+\beta d')}}$ for any $d'>d$.
\item[(iii)] If $f$ is supersmooth with smoothness $\beta> 0$, then
$M_n \delta_n \asymp(-\log\delta_n)^{-1/\beta}$.
\end{longlist}
If $\varepsilon_n \downarrow0$ suitably fast, then the following
rates for $\delta_n$ are valid:
\begin{longlist}[(iii)]
\item[(iv)] If $f$ is ordinary smooth,
and $\varepsilon_n \rightarrow0$ sufficiently fast such that
$\varepsilon_n \lesssim n^{-(\alpha+k+4M_0)}\times\break   (\log n)^{-(\alpha
+k-2)}$, where
$M_0$ is some large constant,
then $\delta_n \asymp(\log n/n)^{1/2}$.

\item[(v)] If $f$ is supersmooth with smoothness $\beta> 0$,
and $\varepsilon_n \rightarrow0$ sufficiently fast such that
$\varepsilon_n \lesssim n^{-2(\alpha+k)/(\beta+2)} (\log
n)^{-2(\alpha+k-1)}
\exp(-4n^{\beta/(\beta+2)})$,
then $\delta_n \asymp(1/n)^{1/(\beta+2)}$.
\end{longlist}
\end{lemma}

We defer the proof of this lemma to 
\cite{Nguyen-hdpsupp}.
The basic structure contains of mostly standard calculations.
The main novel part of
the proof lies in the construction of suitable sieves that yield fast rates
of convergence. The existence of such sieves is a direct
consequence of the geometric lemmas presented in the previous
subsection.

\section*{Acknowledgements}
This research was supported in part by NSF grants CCF-1115769,
NSF CAREER DMS-1351362, and NSF CNS-1409303. The author wishes to
thank the Associate Editor and referees for many helpful comments.

\begin{supplement}
\stitle{Proofs of remaining results}
\slink[doi]{10.3150/15-BEJ703SUPP} 
\sdatatype{.pdf}
\sfilename{BEJ703\_supp.pdf}
\sdescription{Due to space constraints, we provide the proofs of the remaining
technical results of this paper in \cite{Nguyen-hdpsupp}.}
\end{supplement}


%
%



\printhistory
\end{document}